\documentclass[a4paper]{amsart}
 
\usepackage{amssymb}
\usepackage{mathtools}

\usepackage{graphicx}

\usepackage[utf8]{inputenc}
\usepackage[T1]{fontenc}
\usepackage{latexsym}

\usepackage{algorithm}
\usepackage{algorithmic}
\usepackage{caption}
\usepackage{subcaption}

\usepackage{cmap}  
\usepackage{microtype}  
\usepackage{stmaryrd}
\usepackage{graphics}
\usepackage{color}
\usepackage{soul}
\usepackage{cancel}
\usepackage{url,float}
\usepackage{bbding}
\usepackage{booktabs}
\usepackage{comment}
\usepackage{enumerate}
\usepackage{bbm}
\usepackage[normalem]{ulem}
\usepackage[hidelinks]{hyperref}
\hypersetup{colorlinks=true,citecolor=blue,linkcolor=blue,
	filecolor=blue,urlcolor=blue}
\usepackage{cleveref}

\newtheorem{theorem}{Theorem}[section]
\newtheorem{lemma}[theorem]{Lemma}
\newtheorem{corollary}[theorem]{Corollary}
\newtheorem{proposition}[theorem]{Proposition}
\newtheorem{problem}[theorem]{Problem}

\theoremstyle{definition}
\newtheorem{definition}[theorem]{Definition}
\newtheorem{notation}[theorem]{Notation}

\theoremstyle{remark}
\newtheorem{remark}[theorem]{Remark}

\newtheorem{thmsimpl}{Simplified version of Theorem~\ref{thm:cplx2D}\hspace{-1.5mm}} 
\newtheorem{thmsimplfinord}{Simplified version of Theorem~\ref{thm:ordre_fini}\hspace{-1.5mm}}

\numberwithin{equation}{section}

\makeatletter
\newcommand{\removelatexerror}{\let\@latex@error\@gobble}
\makeatother

\newcommand{\R}{\mathbbm{R}}
\newcommand{\ZZ}{\mathbbm{Z}}
\newcommand{\Z}{\mathbbm{Z}}
\newcommand{\rel}{\mathbbm{Z}}
\newcommand{\nat}{\mathbbm{N}}
\newcommand{\ree}{\mathbbm{R}}
\newcommand{\comp}{\mathbbm{C}}

\newcommand{\NN}{\mathbbm{N}}

\newcommand{\rM}{{\mathsf{M}}}
\newcommand{\Muniv}{\rM_1}
\newcommand{\Mbiv}{\rM_2}

\newcommand{\cDun}{{{\mathcal D}_{a_1,b_1,K_1}}}
\newcommand{\Prec}{{\mathsf{tprec}}}
\newcommand{\PracPrec}{{\mathsf{tprec_{comp}}}}

\newcommand{\PracMc}{M_{c,\mathsf{comp}}}
\newcommand{\PracMr}{M_{r,\mathsf{comp}}}
\newcommand{\PrachatA}{\hat{A}_{\mathsf{comp}}}
\newcommand{\cM}{{\mathcal{M}}}
\newcommand{\tO}{{\tilde{O}}}
\newcommand{\M}{{\mathcal M}}

\newcommand\C{{\mathbf C}}

\newcommand{\vol}{{\mathrm{vol\,}}}

\renewcommand{\le}{\leqslant}
\renewcommand{\ge}{\geqslant}
\renewcommand{\leq}{\leqslant}
\renewcommand{\geq}{\geqslant}
\renewcommand{\phi}{\varphi}

\newcommand\textred[1]{{\color{red}\bfseries#1}}
\newcommand\mathred[1]{{\color{red}\boldsymbol{#1}}}
\newcommand\red[1]{\relax\ifmmode\mathred{#1}\else\textred{#1}\fi}

\newcommand\textblue[1]{{\color{blue}\bfseries#1}}
\newcommand\mathblue[1]{{\color{blue}\boldsymbol{#1}}}
\newcommand\blue[1]{\relax\ifmmode\mathblue{#1}\else\textblue{#1}\fi}

\usepackage{xparse}
\NewDocumentCommand{\transp}{sm}{%
  \IfBooleanTF{#1}{(#2)^{t}}{#2^{t}}%
}

\newcommand{\sumprime}{\ensuremath{\sideset{}{'}\sum}}

\newcommand\incircrel[1]{\mathrel{%
    \ooalign{\small{$#1$}\cr$\bigcirc$}%
}}
\newcommand\circledless{\incircrel{\kern.2ex<}}
\newcommand\circledgtr {\incircrel{\kern.45ex>}}
\newcommand\circledleq {\incircrel{\kern.15ex\leq}}
\newcommand\circledgeq {\incircrel{\kern.6ex\geq}}

\newcommand\augment {\mathbin |}

\DeclareMathOperator{\sgn}{sgn}

\def\K{{\mathcal{K}}}
\DeclareMathOperator{\card}{card}
\newcommand\Mult{{\mathsf{M}}}

\providecommand{\normun}[1]{\lVert#1\rVert_1}
\providecommand{\normeucl}[1]{\lVert#1\rVert_2}
\providecommand{\norminf}[1]{\lVert#1\rVert_{\infty}}
\providecommand{\norminfmunpun}[1]{\lVert#1\rVert_{\infty,[-1,1]}}
\providecommand{\norminfmunpund}[1]{\left \lVert#1 \right \rVert_{\infty,[-1,1] \times [-1,1]}}

\providecommand{\norminfabab}[1]{\lVert#1\rVert_{\infty,[a_1,b_1]\times[a_2,b_2]}}

\begin{document}

\author[N. Brisebarre]{Nicolas Brisebarre}
\address{CNRS, ENS de Lyon, Inria, Université Claude-Bernard Lyon 1, Laboratoire LIP (UMR 5668), Lyon, France.}
\email{Nicolas.Brisebarre@ens-lyon.fr}

\author[G. Hanrot]{Guillaume Hanrot}
\address{CryptoLab, Inc. \& Université de Lyon, CNRS, ENS de Lyon, Inria, Université
    Claude-Bernard Lyon 1, Laboratoire LIP (UMR 5668), Lyon, France.}
\email{Guillaume.Hanrot@ens-lyon.fr}

\title[Integer points close to a transcendental curve]{Integer points close to a transcendental curve: an algorithmic approach}





\begin{abstract}
  In this article, we propose an algorithmic approach to determine the
  integer points located near a transcendental curve. This approach is
  closely related to a celebrated work by Bombieri and Pila and to the
  so-called Coppersmith's method. We establish the underlying
  theoretical foundations, prove the algorithms, study their
  complexity and present practical experiments; we also compare our
  approach with previously existing ones.  From a practical point of
  view, we focus on an instance of our general problem, called the
  Table Maker’s Dilemma, whose solving makes it possible to evaluate a
  given function with correct rounding. Our experiments show a
  significant speedup. In particular, our results show that the
  development of a correctly rounded mathematical library for the
  binary128 format is now possible at a much smaller cost than with
  previously existing approaches.
\end{abstract}


\subjclass{Mathematics Subject Classification 2020: Primary 11Y99, 41A05, 65G50;  Secondary 11D75, 11J25}
\keywords{Algorithmic number theory, lattice basis reduction, approximation theory, LLL algorithm, Chebyshev nodes, integer points, computer arithmetic, correct rounding}

\thanks{This work was partly supported by the TaMaDi, FastRelax and NuSCAP projects of the French \emph{Agence Nationale de la Recherche}.}

\maketitle

\section{Introduction}
\label{sec:introduction}

In this paper, we study the problem of explicitly determining rational points with fixed denominator close to a transcendental curve. Formally: 
\begin{problem} \label{probgen} 
Let $ a,  b \in \ree $, $a < b$,  $f : [a,b] \rightarrow \ree $ be a transcendental function analytic in a (complex) neighborhood of $[a,b]$. Let $u, v, w \in \nat \setminus \{ 0 \}$, determine the integers $X$, $a \le X/u \le b$   for which  there exists $Y \in \rel$ satisfying
\[
\left | f\left( \frac{X}{u} \right) - \frac{Y}{v} \right | < \frac{1}{w}.
\]
\end{problem}

Our work, mixing ingredients from approximation theory and algorithmic
number theory, follows a strategy similar to the approach initiated by
Bombieri and Pila~\cite{BombieriPila1989} to bound the number of
solutions, an algorithmic version of which has been proposed by Stehlé
et al.~\cite{SLZ2005, Stehle2006} (without knowledge of Bombieri and
Pila's prior work).

Our goal is to improve over those latter algorithmic approaches, both
from a theoretical and a practical point of view. Our main ingredient is
to work with an algorithmic representation of the function $f$ itself
rather than reducing the problem to the polynomial case by replacing
$f$ by an approximation (Taylor) polynomial for $f$.  The difference
may seem subtle, but it raises significant difficulties, while
bringing two major improvements: smaller matrices and the prereduction
trick (see Sections~\ref{sec:prered} and~\ref{sec:expresults}).

\paragraph{Our main results.}
Our main result is Theorem~\ref{thm:cplx2D}, which we present in a simplified version (in particular, with a slightly suboptimal exponent). We introduce the piecewise affine function:
\begin{multline}\label{eq:g}
  g : x \in [1,+\infty) \mapsto \\ \frac{1}{3} \left ( \frac{2x}{1 + \left \lfloor  \sqrt{1/4+2x} -1/2 \right \rfloor}  + \left \lfloor  \sqrt{1/4+2x} -1/2 \right \rfloor \right ) - 1/6.
\end{multline}
Note that $g(1) = 1/2$, $g$ is continuous and increasing and  $g(x) = 2\sqrt{2x}/3 (1 + o(1))$ for $x\rightarrow + \infty$. 
\begin{thmsimpl}
Let $d \in \nat, N = (d+1)(d+2)/2$.  Given fixed $f$ and two fixed real numbers $\alpha, \beta$, Problem~\ref{probgen}  can heuristically~\footnote{We refer to p.~\pageref{page:heuristic} for a discussion of this (mild) heuristic character.}  be solved for $u, v \rightarrow \infty$, $d\rightarrow \infty$, with $d = o(\log(uv))$, $\gamma \in [3,N]$, and
\[
  w = (uv)^{\frac{d}{3 g(N/\gamma) }(1 +O(1/d))}
\]
  over $[\alpha, \beta]$ using
  \[
    (\beta-\alpha) (uv)^{\frac{d}{3 g(N/\gamma) \gamma}(1 +O(1/d))} 
  \]
  calls to Algorithm~\ref{algo:2variables} with parameter $d$.
\end{thmsimpl} 

We give an intuition regarding the role of the parameter
$\gamma$. Problem~\ref{probgen} looks for rational values of $f$ with
fixed denominator $v$ in a region which has ``length'' $b-a$ and
``width'' $1/w$. Changing $f$ to $f^{-1}$ would exchange length and
width. This suggests to study the problem through the
parameter\footnote{For this discussion, we assume $|b-a|<1/2$.} $-\log
w/\log |b-a|$ which describes the ``shape'' of the region; this
constant is our parameter $\gamma$ in the previous theorem. The
parameter $\gamma$ thus measures the way we balance our efforts
between the width, and the length; a large value of $\gamma$ means
that we treat a large interval $|b-a|$, looking for values of $f(X/u)$
very close to $Y/v$. Conversely, a value of $\gamma \approx 1$ means
that $|b-a|$ is of the order of $1/w$, and finally $\gamma \ll 1$
means that we are in the same situation as $\gamma \gg 1$, but for
$f^{-1}$.

For $\gamma = N$, the previous theorem yields Bombieri-Pila's
estimate~\cite{BombieriPila1989}, whereas for $\gamma = o(N)$, we
recover Stehlé's results~\cite{SLZ2005, Stehle2006}. For $\gamma
\asymp N$, our analysis improves on Stehlé's and bridges the gap
between the two approaches.

If $f$ is further assumed to be entire with moderate growth at infinity,
we prove: 
\begin{thmsimplfinord}
  Let $f$ be an entire function of finite order $\le \theta$, assume that  $ \log w =  -  \log (|b-a|^N)/\lambda$ where $\lambda $ is a constant $\geq 1$ and let
  $\nu > \frac{2\lambda \theta}{3g(\lambda)}$ be a real number; then
  Problem~\ref{probgen} can heuristically be solved
  over $[a, b]$ for $uv \rightarrow \infty$
  and
  \[
  w = (uv)^{\frac{\nu^2}{2\theta \lambda} \frac{\log(uv)}{\log \log (uv)}(1 + o(1))},
  \]
  using a single call to Algorithm~\ref{algo:2variables} with parameter
  $d = \nu \frac{\log(uv)}{\log \log (uv)} (1 + o(1))$. 
\end{thmsimplfinord}

This statement improves on~\cite{Stehle2006} which obtained the weaker bound $w = 2^{O(\log(uv)^2)}$, for $d = O(\log(uv))$. 

\paragraph{Practical improvements and the TMD}
Our method also shows significant practical impact on the resolution
of Problem~\ref{probgen}. In order to illustrate this impact, we have
used it on a particular instance of Problem~\ref{probgen} that was
motivational for our work.

Problem~\ref{probdir} is often referred to as the Table Maker's
Dilemma (TMD). We motivate it and describe it shortly, referring to
\cite{BHMZ2024} for a more detailed discussion and extensive
bibliography.

For various reasons, it may be desirable that numerical computations
produce well-defined, \emph{correctly rounded} implementations of the classical functions. Given a rounding function (see~\cite[p. 4]{BHMZ2024}, either closest\footnote{with tie to even or tie to away} or directed\footnote{towards zero, towards $\pm \infty$}), mapping real numbers to machine numbers, this means that the implementation returns the rounding of the actual mathematical value of the function

In order to evaluate a function $f$ such as the cube root or the
exponential function, one usually evaluates a very good, typically
polynomial, approximation $P$ of it. Building a correctly rounded
implementation of $f$ then boils down to guaranteeing that the rounded
value of $f(x)$ coincides with the rounded value of $P(x)$. This leads
to the following problem:
\begin{problem}[TMD (for directed rounding functions)]
 \label{probdir}
Let $e_1, e_2 \in \rel$, $f : [2^{e_1},2^{e_1+1})\rightarrow [2^{e_2},2^{e_2+1})$.

Let  $p \in \nat$, determine
\begin{itemize}
\item the set of exact cases
  \[
    EC =\left  \{ X \in \llbracket 2^{p-1},2^p-1 \rrbracket \textrm{ such that } 2^{p-1-e_2} f\left (X/2^{-e_1+ p-1}\right) \in \rel \right \} 
  \]
where $\llbracket 2^{p-1},2^p-1 \rrbracket = \rel \cap [2^{p-1},2^p-1]$;
\item the minimum $\mu (p) \in \ZZ$ such that, for $X \notin EC$  and for $Y \in \llbracket 2^{p-1},2^p-1 \rrbracket$, we have
\[
\left |2^{p-1-e_2} f \left(\frac{X}{2^{-e_1+p-1}}\right) - Y \right | \ge \frac{1}{2^{\mu(p)}}.
\]
\end{itemize}
\end{problem}

Note that Problem~\ref{probgen} encompasses the TMD: consider $a =
2^{e_1}, b = 2^{e_1 +1}- 2^{e_1 + 1 -p}, u= 2^{p -e_1- 1}$ and $ v = 2^{p -e_2-
  1}$. It also includes a generalization of the question addressed
in~\cite{BombieriPila1989}, which corresponds to the case $a = 0, b =
1, u= v $.

Regarding Problem~\ref{probdir}, experiments using our implementation
demonstrate very significant speedups with respect to the state of the art, up
to three orders of magnitude compared to the implementation of~\cite{Stehle2006}. In particular, we give the first significant results for the
binary128~\cite{BHMZ2024} format. We prove, for instance, that~for $f = \exp$,
$e_1 = -2$, $e_2=0$, one has $\mu(113) \le 12\cdot 113$ in a few days of computation
using a single-thread CPU.

This work paves the way for the first development of an efficient
correctly rounded mathematical library in the three fundamental
formats binary32, binary64 and binary128, extending work initiated
by the libraries
\texttt{CRlibm}\footnote{\url{https://gforge.inria.fr/scm/browser.php?group_id=5929&extra=crlibm}}~\cite{CRLIBM2006,Lauter2008}
and
\texttt{CORE-MATH}\footnote{\url{https://core-math.gitlabpages.inria.fr/}}~\cite{SZG2022}
for the binary64 precision C99 standard elementary functions.

\subsection{A rough view of our approach and an outline of the paper}\label{subsec:goal}

We follow the strategy of subdividing the interval under
study into small subintervals, in order to be able to replace the
function under study by polynomial approximation. However, from that
point we proceed differently. Over a small interval $I$, classical
solutions \emph{reduce} (in the complexity-theoretic sense) to
the polynomial case by noticing that, if $\max_{x\in I} |P(x) - f(x)|\le
\varepsilon$, then 
\begin{multline*}
  \left\{ \textrm{Solutions of Problem~\ref{probgen}}\right.   \left.  \textrm{for}
  f, I=[a, b], u, v, w \right\}
\subset \\
\left\{
\textrm{Solutions of Problem~\ref{probgen} for $P, I=[a, b], u, v, (w^{-1}+\varepsilon)^{-1}$}
\right\}.
\end{multline*}
This is the path taken, for instance, by~\cite{Lef2000,
  LefevreMuller2001a,Lefevre2005,SLZ2005, Stehle2006}. Rather than doing
this, we approximate
$f$ by an algebraic function using uniform approximation: if we assume
$f:[a_1,b_1]\rightarrow \ree$, we search for $P_0$ and $P_1 \in \Z[X,
  Y]$ that are small on a strip around the ``weighted'' curve $(u x, v
f(x))$. This property then implies that the solutions to
Problem~\ref{probgen} are common roots to $P_0$ and $P_1$. Finally, we
use a heuristic coprimality\label{page:heuristic} assumption on $P_0$ and $P_1$, analogous
to the one used in~\cite{BonehDurfee2000,SLZ2005,Stehle2006} to obtain
these bad cases. 

In order to compute $P_0$ and $P_1$, we use ideas and techniques developed by the first author and S.~Chevillard~\cite{BrisebarreChevillard2007}.  If we still assume $f:[a_1,b_1]\rightarrow \ree$, the key idea is to find $P \in \Z[X_1, X_2]$ that is small at some points  $(u x_i,v(f(x_i)+y_i))$ of a strip around the ``weighted'' curve. If the pairs $(x_i,y_i)$ are carefully chosen, these discrete constraints imply uniform smallness over the strip around the curve, cf. Section \ref{subsec:overview:lebesgue}. 
 The discrete constraints can be reformulated as the fact that the values of $P$ at  the $(u x_i,v(f(x_i)+y_i))$  are the coordinates of a certain short vector in a Euclidean lattice. The celebrated LLL algorithm, cf. Section~\ref{subsec:overview:lattices}, then allows for computing a reasonable candidate for $P$.
 This approach, which forces smallness on a strip around this curve, is somehow analogous to~\cite{SLZ2005, Stehle2006}.

 Another important ingredient is a prereduction trick that gives rise
 to a significant speedup. It relies on the hunch that contiguous
 intervals should lead to fairly identical lattices to reduce, hence
 the LLL matrix used to address an interval should quasi-reduce the
 lattice built from the contiguous interval. This holds in our
 situation as we continuously work with the function $f$, so that the
 lattices on two contiguous interval are closely related. This does
 not work for the previous approaches, which replace $f$ by a different
 polynomial approximation over each interval, yielding lattices which are
 ``close'', but not enough for the idea to work.

We give a state of the art in Section~\ref{sec:state}. Theoretical
results are presented in Section~\ref{subsec:state:theory},  while
the existing algorithmic approaches are sketched in
Section~\ref{subsec:state:algorithmic}. Our approach relies on tools
from Approximation Theory and Euclidean lattice basis reduction and an
idea presented in~\cite{BrisebarreChevillard2007,ChevillardPhDThesis};
we recall them in Section~\ref{sec:overview}.  Our complete approach
is presented and analyzed in Section~\ref{sec:two-var}; it is followed
by \Cref{sec:prered}, where we introduce and study the prereduction
idea. We present a comparison with previous work in
Section~\ref{sec:comparison} and conclude with experimental results
in Section~\ref{sec:expresults}.

A preliminary version \cite{BH2023} of this work also  describes a simpler, but less powerful, univariate version of our method.

\section{State of the art}
\label{sec:state}

We now recall previous theoretical results about Problems~\ref{probgen} and~\ref{probdir}. Regarding Problem~\ref{probdir}, one can find a more complete state of the art in~\cite[Chap. 12]{MullerEtAl2010} and in~\cite{BHMZ2024}. 

Since we have tested our algorithms on Problem~\ref{probdir} instances, we focus  on TMD-related state-of-the-art for algorithmic approaches in this section.

\subsection{Theoretical results} \label{subsec:state:theory}

Problem~\ref{probgen} is connected to a number of topics from Diophantine Geometry. We focus on references that appear to be the more closely connected to our work. The seminal result by Jarn\'{\i}k~\cite{Jarnik1926} and the extremely fruitful work by Bombieri and Pila~\cite{BombieriPila1989}, including their \textit{determinant method} extended later by Heath-Brown~\cite{HeathBrown2002}, yield upper bounds for the number of integer points on a curve. This corresponds to a modified version of Problem~\ref{probgen} where $w$ can take any nonnegative integer value. As the reader will see in the sequel, our approach shares an important feature with the determinant method: in~\cite{BombieriPila1989}, the key point lies in controlling the size of an alternant determinant, which leads to an estimate of the number of integer points on the curve. 
In the present paper, we estimate the volume of a lattice related to  a similar alternant determinant. For a suitable choice of parameters,  this volume is small enough for the lattice to have small vectors, leading to polynomial constraints on the solutions to Problem~\ref{probgen}. These constraints can be used for the explicit determination of these solutions. 

Tackling Problem~\ref{probdir} can be viewed as a quest for a uniform irrationality measure with restricted denominators. There are partial results in this direction~\cite{Beukers1980,Beukers1981,BB2002,Rivoal2007} but more interestingly in our context,  Khémira and Voutier, following the works~\cite{NW95} and~\cite{KhemiraPhD}, proved in~\cite{KheVou2011} a lower bound (called transcendence measure)  for  the expression~$\left|e^{\beta} - \alpha \right|$, where~$\alpha$ and~$\beta$ are algebraic numbers,  $\beta \neq 0$. In particular, when specialized in floating-point numbers, their result provides interesting (but unfortunately overestimated) upper bounds for the TMD~\cite{BHMZ2024}.

Note that a relevant information on the exponential function yields relevant information as well on trigonometric and hyperbolic functions, and their respective reciprocals, the logarithm function and inverse trigonometric functions, see~\cite{BHMZ2024} for an illustration of this remark in the context of the TMD.

\subsection{Algorithmic approaches to Problem~\ref{probdir}}
\label{subsec:state:algorithmic}

In view of the lack of practicality (or in order to improve on it) of fundamental results discussed in the previous subsection, algorithmic approaches have been developed and used in an extensive way since the late 90s.

A common feature of the existing approaches is that one starts by
splitting the domain into subintervals and replacing the function
(assumed to be sufficiently smooth) by a polynomial, in practice a
Taylor approximation, over the interval under consideration; one is
then reduced to studying the problem in the polynomial case.

Lefèvre, together with Muller~\cite{Lef2000,LefevreMuller2001a,Lefevre2005}, studied reduction to degree 1 polynomials; in this case, the remaining Diophantine problem is to find two integers $x, y$, $|x|\leq X, |y| \le Y$ such that $|\alpha x + \beta - y|$ is minimal, which is solved by elementary Diophantine arguments, either
the three distance theorem, or continued fractions (see e.g.,~\cite{BertheImbert09}). These ideas lead to an algorithm of complexity\footnote{Here, we use the $\tO(\cdot)$ notation defined as $f(n) = \tO(g(n))$ iff. there exists a nonnegative integer $k$ such that $f(n) = O(g(n) \log^k g(n))$ ($g$ is implicitly assumed to tend to $+\infty$ at $\infty$).} $\tilde{O}(2^{2p/3})$  as $p\rightarrow \infty$. 

Highly optimized and parallel implementations of this method have proved
invaluable to find optimal values of $\mu(53)$ for several functions of the
standard C mathematical library. This was a key step towards the development of
\texttt{CRlibm},   the first mathematical library ensuring correctly-rounded evaluation.

Higher degree approximations give rise to more complicated Diophantine
problems. Stehlé, Lefèvre and Zimmermann~\cite{SLZ2005}, further
refined by Stehlé~\cite{Stehle2006}, make use of a technique due to
Coppersmith~\cite{Coppersmith1997, Coppersmith2001} and based on
lattice basis reduction to solve it. We recall Corollaries 4 \& 5
of~\cite{Stehle2006}, adapted to our context. 
\begin{theorem}[Stehlé~\cite{Stehle2006}]\label{thm:slz}
  Let $p \in \nat$, $\varepsilon > 0$, and a function $f : [1,2] \mapsto [1,2]$.

  There exists a heuristic algorithm of complexity $2^{p(1 + \varepsilon)/2}$ which returns all  $X$ and $Y \in \llbracket 2^{p-1},2^p-1 \rrbracket$ such that
\[
  \left |2^{p-1} f \left(\frac{X}{2^{p-1}}\right) - Y \right | \le \frac{1}{2^{p}}.
\]

  There exists a polynomial-time heuristic algorithm which returns all $X$ and $Y \in \llbracket 2^{p-1},2^p-1 \rrbracket$ such that
\[
  \left |2^{p-1} f \left(\frac{X}{2^{p-1}}\right) - Y \right | \le \frac{1}{2^{4p^2}},
\]
 the latter works by reducing a lattice   of dimension $O(p^2)$ of $\R^m$ for some $m = O(p^4)$. 
\end{theorem}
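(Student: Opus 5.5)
This result is quoted from Stehlé's work, so the plan is to reconstruct the SLZ/Coppersmith argument in the setting of Problem~\ref{probdir} rather than derive anything new. The common skeleton for both statements is:
\begin{enumerate}
\item subdivide $\llbracket 2^{p-1},2^p-1\rrbracket$ into subintervals of $2^{t}$ consecutive integers (so length $2^{t-p+1}$ in the variable $x=X/2^{p-1}$);
\item on each subinterval centred at $x_0$, replace $f$ by its Taylor polynomial $P$ of degree $n$ around $x_0$;
\item look for integer solutions of the Diophantine inequality defined by $P$.
\end{enumerate}

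For the reduction in step 2, write $X = X_0 + x$ with $|x|\le 2^{t}$. The truncation error is $O(2^{(n+1)(t-p)})$ after scaling by $2^{p-1}$. We choose $t$ so that this is at most $2^{-p}$, and replace the target bound by twice it. After multiplying by a common denominator and rounding the coefficients, the condition becomes: find small $(x,y)$ with $\tilde P(x) + y \equiv 0 \pmod{M}$, where $|x|\le 2^{t}$, $|y|$ is bounded, and $M \approx 2^{p}\cdot(\text{target})^{-1}$.

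Step 3 is Coppersmith's bivariate modular method. Let $Q(x,y)=\tilde P(x)+y$. Build the lattice spanned by the shifted polynomials
\[
x^i y^j Q^k M^{\alpha-k},
\]
with $k\le\alpha$ and suitable ranges for $i,j$. Scale it by the bounds on $x$ and $y$ and LLL-reduce it. Whenever the Howgrave-Graham condition holds, i.e.\ $\mathrm{vol}^{1/\dim}$ times the LLL factor is below $M^\alpha/\sqrt{\dim}$, the first two reduced vectors give polynomials vanishing over $\ZZ$ at every solution. Under the heuristic that these two polynomials are coprime, we take their resultant and read off the solutions.

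The choice of parameters differs between the two statements.

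\emph{First statement.} Take $n$ and $\alpha$ large but fixed. The volume computation (the exponents of $X$, $Y$ and $M$ summed over the triangular index set) shows that the condition holds asymptotically as long as $2^{t}$ stays slightly below $2^{p/2}$. There are then $2^{p-t}=2^{p(1+\varepsilon)/2}$ subintervals, each costing a polynomial number of operations in $p$, which gives the claimed complexity.

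\emph{Second statement.} Take a single interval, $t=p$. Use $n=O(p)$ Taylor terms (this needs analyticity of $f$ on a complex neighbourhood). Take $\alpha=O(p)$, which gives dimension $O(p^2)$ and coefficients indexed by monomials of degree up to $O(p^2)$, hence ambient dimension $O(p^4)$. The volume condition then allows $M$ as large as $2^{O(p^2)}$, so the target precision is $2^{-4p^2}$.

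The main obstacle is the volume and determinant bookkeeping in the second regime. We must check that the Taylor truncation error, the rounding of the coefficients, and the LLL factor $2^{\dim/2}$ all stay below the $M^\alpha$ margin when both $n$ and $\alpha$ grow like $p$. The constant $4$ in the exponent comes out of this balance. I would carry it out by writing every quantity as $2^{cp^2}$ and optimising the constants.
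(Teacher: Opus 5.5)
The paper does not prove this statement; it quotes Corollaries~4 and~5 of \cite{Stehle2006}. Your skeleton is indeed Stehlé's: subdivision, Taylor approximation, a bivariate modular Coppersmith lattice, LLL, and a resultant under a coprimality heuristic. But the step that carries all the quantitative content fails as you describe it.

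\emph{The triangular count cannot give $2^{p/2}$.} Summing the exponents of $X$, $Y$, $M$ over a triangular index set computes the determinant of a triangular basis. Such a determinant sees only the leading monomials. For $Q=y+\tilde P(x)$ with $\deg\tilde P=n$ it gives, to leading order in $\alpha$, only the generic condition $X^nY<M$. Since $Y/M$ is at least the target, which is at least $2^{-p}$, this forces $nt<p$.

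The truncation error is $\approx 2^{p-(n+1)(p-t)}$; note that you dropped the factor $2^{p}$. With your requirement that it be $\le 2^{-p}$, any $t>p/3$ forces $n\ge 3$, so you only get $t\le p/3$, i.e.\ cost $2^{2p/3}$. Even if you let the truncation error exceed $2^{-p}$ and filter afterwards, the best choice ($n=2$) gives $t<2p/5$, i.e.\ cost $2^{3p/5}$. Your assertion that the volume condition holds for $2^{t}$ slightly below $2^{p/2}$ is therefore not what the computation you indicate yields.

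\emph{What is missing.} The key is to exploit the geometric decay of the Taylor coefficients at the scale of a subinterval, $|\tilde p_k|\,2^{tk}\approx M\,2^{p-k(p-t)}$, which no triangular determinant can detect. One does this as follows.
\begin{itemize}
\item Use a family of $N$ shifts such as $x^iQ^kM^{\alpha-k}$ with $i+k\le\alpha$, so $N\approx\alpha^2/2$.
\item Do \emph{not} add the extra shifts that would close up the monomial support, so the basis is non-square.
\item Bound its volume via Cauchy--Binet and Hadamard from entry bounds $|B_{ij}|\le\mathfrak r_i\mathfrak c_j$. This is Stehlé's Theorem~2, of which Theorem~\ref{thm:stehle-ants} is an improved form.
\end{itemize}
Here $\mathfrak r_{i,k}\approx M^{\alpha}2^{p(i+k)}$, and the weight of the monomial $x^ay^b$ is $\mathfrak c_{a,b}\approx\rho^{-(a+\gamma b)}$, with $\rho=2^{p-t}$ and $\gamma=2p/(p-t)$. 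The volume is then governed by the $N$ largest weights, a weighted lattice-point count of the same kind as $\Omega_\gamma$ in Appendix~\ref{app:phipsi}. To leading order, the Howgrave-Graham condition becomes $p<(p-t)\sqrt{\gamma}$, i.e.\ $t<p/2$, independently of $n$. This is the case $\gamma=4$ mentioned after Theorem~\ref{thm:cplx2D}.

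\emph{The second regime.} The same remark applies, and in addition your choice $n=O(p)$ is wrong. On a single interval the Taylor remainder is only $2^{-O(n)}$, or $2^{-O(n\log n)}$ for entire $f$. Certifying $2^{-4p^2}$ therefore needs $n$ of order $p^2$, up to a logarithm. That, not a count of all monomials of degree $O(p^2)$, is what gives ambient dimension $m\approx n\alpha^2/2=O(p^4)$ with rank $O(\alpha^2)=O(p^2)$. The constant $4$ must likewise come out of the non-square volume bound (now with $\rho$ bounded and $\gamma$ of order $N$), not from a triangular exponent sum.
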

As noticed by Stehlé in~\cite{Stehle2006}, this can be seen as an algorithmic counterpart of the theoretical result of Nesterenko and Waldschmidt~\cite{NW95}.

The heuristic character of the algorithm is in practice rather mild
(i.e., the algorithm works as expected on almost all inputs).

We follow a slightly different approach: the algorithmic content of our
method remains based on lattice basis reduction, but instead of
reducing the problem to a polynomial problem, we keep the problem
connected to the function, thanks to rigorous uniform approximation
techniques based on Chebyshev interpolation. In order to develop our
approach, we therefore give a brief overview of Chebyshev
interpolation/approximation and lattice basis reduction in the next
section.

Before concluding this section, we point to the works~\cite{Elkies2000,CHS2009,AKR2018} which address Hall's
conjecture. The latter can be viewed as a specific instance of
Problem~\ref{probdir}. Note that Elkies' approach also consists of
first subdividing the initial interval and then using lattice
reduction to determine interesting cases for Hall's conjecture.

\section{A short overview of uniform approximation and lattice basis reduction}
\label{sec:overview}

We shall require various tools from uniform approximation theory~\cite{FoxParker1968, Rivlin1974,Powell1981,Cheney1982, Boyd2001,Mason2002,Trefethen2013} and algorithmic geometry of numbers~\cite{Lov1986,GruLek1987,Cohen1993,Cas1997,LLL+25,vzGG2013}. 

\subsection{Relation between uniform approximation and interpolation}
\label{subsec:overview:lebesgue}

Let $n\in \NN$, the $n$\mbox{-}th Chebyshev polynomial of the first kind is defined by  $T_n \in \mathbb{R}_n [x]$ and $T_n (\cos t) = \cos (nt) \textrm{ for all } t \in [0, \pi].$ The $T_n$'s can also be defined by 
\[
T_0 (x) = 1, T_1 (x) = x, T_{n + 2} (x) = 2 x T_{n + 1} (x) - T_n (x), \forall n \in \mathbb{N}.
\]

\subsubsection{Interpolation at pairs of Chebyshev nodes}\label{subsec:lebconst}

The zeros of $T_{n+1}$ are
\[
{\mu}_{k,n} = \cos \left( \frac{(n - k + 1/2) \pi}{n + 1} \right), k = 0, \ldots, n.
\]
They are called $(n+1)$-Chebyshev nodes of the first kind. Polynomials interpolating univariate functions at this family give rise to very good uniform approximations over $[-1,1]$ to these functions~\cite{Mason2002,Trefethen2013}.
To be able to work on an interval $[a,b]$, one needs scaled versions of Chebyshev polynomials and nodes. We then define, for $n \in\nat$,
\[
  T_{n,[a,b]} := T_n \left (\frac{2x-b-a}{b-a} \right ), \mu_{k,n,[a,b]} := \frac{(b-a)\mu_{k,n}+a+b}{2}, k =0, \ldots, n.
\]
Here again, when there is no ambiguity, we denote the nodes as $\mu_{k,[a,b]}$.
 Note that $T_{n,[a,b]} \left ( \mu_{k,n,[a,b]} \right ) = T_n (\mu_{k,n})$.

Let $N_1, N_2 \in \nat, N_1, N_2 \ge 1$, we now introduce the two dimensional extension of the DCT-II\footnote{This denotes the discrete cosine transform of type 2~\cite{Strang1999,PPST2018}. We can take advantage of fast algorithms~\cite[\S 6.3]{PPST2018} that make possible to compute it in at most $\mathcal{O} (N\log N)$ operations for a fixed and given precision.}:
\[
\begin{array}{rrcl}
  \textrm{2D-DCT-II}: & \ree^{N_1} \times \ree^{N_2} & \rightarrow  &\ree^{N_1} \times \ree^{N_2}  \\
                      & (x_{\ell_1,\ell_2})_{\substack{0 \le \ell_1 \le N_1 -1,\\0 \le \ell_2 \le N_2 -1}} & \mapsto & (X_{k_1,k_2})_{\substack{0 \le k_1 \le N_1 -1,\\0 \le k_2 \le N_2-1}}
\end{array}
\]
with
\[
X_{k_1,k_2} = \sum_{0 \le \ell_1 \le N_1 -1}\sum_{0 \le \ell_2 \le N_2 -1} x_{\ell_1,\ell_2} \cos \left ( \frac{k_1(\ell_1 + 1/2)\pi}{N_1} \right )\cos \left ( \frac{k_2(\ell_2 + 1/2)\pi}{N_2} \right ),
\]
for $ k_1= 0, \ldots, N_1-1,  k_2= 0, \ldots, N_2-1$.

Let $a_1, a_2, b_1, b_2 \in \ree$, $a_1 < b_1$ and $a_2 < b_2$. Let
$f$ a function defined over $[a_1,b_1]\times [a_2,b_2]$. Let
$\ree_{N_1-1, N_2-1}[x, t]$ denote the set of bivariate polynomials $P(x,t)$
with real coefficients and $\deg_x P < N_1, \deg_t P < N_2$. If we
interpolate $f$ by $P (x,t) \in \ree_{N_1-1,N_2-1}[x,t]$ at pairs of
Chebyshev nodes 
 $(\mu_{k_1,N_1-1,[a_1,b_1]},\mu_{k_2,N_2-1,[a_2,b_2]})_{\substack{0
    \le k_1 \le N_1 -1,\\ 0 \le k_2 \le N_2-1}} $, we have the
following expressions for the interpolation polynomials (the proof is
identical to the one variable case~\cite[Chap. 6]{Mason2002}), for $i
= 0, \ldots, N-1$:
\[
P (x,t) = \sumprime_{k_1 = 0}^{ N_1-1} \sumprime_{k_2 = 0}^{N_2 -1} c_{k_1,k_2} T_{k_1,[a_1,b_1]} (x)T_{k_2,[a_2,b_2]} (t) \in \ree_{N_1-1,N_2-1} [x,t],
\]
with 
\begin{equation}\label{eq:coeffs-interpol2D}
  ( c_{k_1,k_2} )_{\substack{0 \le k_1 \le N_1 -1\\0 \le k_2 \le N_2 -1}} = \frac{4}{N_1 N_2} \textrm{2D-DCT-II} \left ( (f(\mu_{N_1 - 1 - \ell_1,N_2 - 1 - \ell_2}) )_{\substack{0 \le \ell_1 \le N_1 -1\\0 \le \ell_2 \le N_2 -1}}\right ). 
\end{equation}

The symbol $\sumprime$ means that the coefficient of index $k_1 = 0$ or $k_2 =0$ has to be halved (hence, $c_{0,0}$ has to be divided by four in particular). Note that, if we introduce
\[
  \widehat{f}: \widehat{f}(z_1,z_2) = f \left ( z_1 \frac{b_1-a_1}{2} + \frac{a_1+b_1}{2}, z_2 \frac{b_2-a_2}{2} + \frac{a_2+b_2}{2} \right ),
\]
the coefficients $c_{k_1,k_2}$ are also the coefficients of the interpolation polynomial in $\ree_{N_1-1,N_2-1} [x,t]$ of $\widehat{f}$ at pairs of Chebyshev nodes of the first kind.

\subsubsection{Uniform approximation using interpolation polynomials}\label{subsec:unifapprox}

Let $\rho > 1$, if $a < b$ are two real numbers, we define the ellipse
\[
  \mathcal{E}_{\rho,a,b} = \left \{ \frac{b-a}{2} \frac{\rho e^{i\theta} + \rho^{-1} e^{-i\theta}}{2} +\frac{a+b}{2}, \theta \in [0,2\pi] \right \}
\]
and let ${E_{\rho,a,b}}$ be the compact  region bounded by the ellipse $\mathcal{E}_{\rho,a,b}$.

Let  $N\in \nat$, $N \ge 1$, we also define 
\[
\eta_{\rho,0} = 1 \textrm{ and }  \eta_{\rho,k} =   \frac{\rho^2+1}{\rho^{2}-1} \textrm{ for } k = 1, \ldots, N-1.
\]
The following proposition establishes Cauchy's inequalities for interpolation polynomials at pairs of (scaled) Chebyshev nodes.
Let $\rho_1, \rho_2 > 1$, $a_1<b_1$, $a_2 < b_2$, we define $\mathcal{E}_{\rho_1,a_1,b_1,\rho_2,a_2,b_2} =\mathcal{E}_{\rho_1,a_1,b_1} \times \mathcal{E}_{\rho_2,a_2,b_2}$  
  and  ${E_{\rho_1,a_1,b_1,\rho_2,a_2,b_2}} = E_{\rho_1,a_1,b_1}\times E_{\rho_2,a_2,b_2}$. 
  
\begin{notation}\label{not:M1M2}
Let $F_1$ (resp. $F_2$) be a function analytic in a neighborhood of $E_{\rho_1,a_1,b_1}$ (resp. $E_{\rho_1,a_1,b_1,\rho_2,a_2,b_2}$). We define
\[
\Muniv (F_1) := \max_{z \in E_{\rho_1,a_1,b_1}}|F_1(z)|, \qquad
\Mbiv (F_2) := \max_{(z_1, z_2) \in E_{\rho_1,a_1,b_1, \rho_2, a_2, b_2}}|F_2(z_1, z_2)|.
    \]
\end{notation}
  
\begin{proposition}\label{prop:ineqcauchy2D}
Let $\rho_1, \rho_2 > 1$,  $a_1<b_1$, $a_2 < b_2$, let  $N_1, N_2 \in \nat, N_1, N_2 \ge 2$, $F$ be a function analytic in a neighborhood of $E_{\rho_1,a_1,b_1,\rho_2,a_2,b_2}$, the coefficients $c_{k_1,k_2}, k_1 =0, \ldots, N_1 -1, k_2 =0, \ldots, N_2 -1$ of the interpolation polynomial  $P_{N_1-1,N_2-1}$ of $F$ at pairs of Chebyshev nodes of the first kind satisfy, for $k_1 = 0, \ldots, N_1 -1$, $k_2 = 0, \ldots, N_2 -1$,
\[
\left | c_{k_1,k_2} \right |  \le   4 \frac{\Mbiv(F)}{\rho_1^{k_1}\rho_2^{k_2}}\eta_{\rho_1,k_1} \eta_{\rho_2,k_2}.
\]
Moreover, we have 
\[
\norminfabab{ F - P_{N_1-1,N_2-1}}
\le \frac{16 \rho_1 \rho_2 \Mbiv(F)}{(\rho_1 -1)(\rho_2 -1)} \left (\frac{1}{\rho_1^{N_1} } + \frac{1}{\rho_2^{N_2} }\right ).
\]
\end{proposition}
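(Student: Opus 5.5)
We first reduce to $[a_1,b_1]=[a_2,b_2]=[-1,1]$. Replace $F$ by $\widehat F$ as in the remark after \eqref{eq:coeffs-interpol2D}. The affine maps send $E_{\rho_i,a_i,b_i}$ onto the standard ellipse regions $E_{\rho_i,-1,1}$, so $\Mbiv$ is unchanged. The coefficients $c_{k_1,k_2}$ are unchanged as well, and the sup norm on the rectangle becomes the sup norm on $[-1,1]^2$.

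The main tool is the exact aliasing formula for Chebyshev interpolation. Since $F$ is analytic on a neighborhood of the product of the closed ellipse regions, it has an absolutely convergent Chebyshev expansion $F(x,t)=\sum_{j_1,j_2\ge 0} a_{j_1,j_2}T_{j_1}(x)T_{j_2}(t)$ there. Write $x=(z_1+z_1^{-1})/2$ and $t=(z_2+z_2^{-1})/2$. Then $a_{j_1,j_2}$ is, up to the factor coming from symmetrization, a double Laurent coefficient of $F$ on the torus $|z_1|=\rho_1$, $|z_2|=\rho_2$. Cauchy's inequality on that torus gives the bound $|a_{j_1,j_2}|\le 4\Mbiv(F)\rho_1^{-j_1}\rho_2^{-j_2}$, which loses only the factor $1/2$ for the index $0$. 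At the Chebyshev nodes of the first kind, $T_j$ takes the same values as $\pm T_{k}$, where $k$ is the folded index of $j$ modulo $2N$: indeed $T_{2mN\pm k}(\mu_{\ell})=(-1)^m T_k(\mu_\ell)$, and $T_{N}$ vanishes at the nodes. This holds in each variable separately, by tensor product. Hence each interpolation coefficient is an aliased sum
\[ c_{k_1,k_2}=\sum \pm a_{j_1,j_2}, \]
taken over $j_i\in\{k_i,\,2N_i-k_i,\,2N_i+k_i,\dots\}$, with the $\sumprime$ halving convention taken into account.

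For the coefficient bound, we sum the geometric series in each variable separately. For $k\ge 1$, the majorant in one variable is
\[ \rho^{-k}\sum_{m\ge0}\rho^{-2mN}+\rho^{k}\sum_{m\ge1}\rho^{-2mN}. \]
Using $N\ge k+1$, this is at most $\rho^{-k}(1+\rho^{-2})/(1-\rho^{-2})=\rho^{-k}\eta_{\rho,k}$. For $k=0$ the aliases are $2mN$, and the factor $2$ from the convention $\sumprime$ is compensated by the $1/2$ gained in the Chebyshev coefficients. Taking the product of the two one-variable estimates yields $|c_{k_1,k_2}|\le 4\Mbiv(F)\rho_1^{-k_1}\rho_2^{-k_2}\eta_{\rho_1,k_1}\eta_{\rho_2,k_2}$.

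For the error bound, we use $|T_j|\le1$ on $[-1,1]$ and write
\[ F-P=\sum_{(j_1,j_2)\notin[0,N_1)\times[0,N_2)} a_{j_1,j_2}T_{j_1}T_{j_2}-\sum_{k_1<N_1,\,k_2<N_2}(c_{k_1,k_2}-a_{k_1,k_2})T_{k_1}T_{k_2}. \]
Both sums are bounded by the sum of $|a_{j_1,j_2}|$ over the indices with $j_1\ge N_1$ or $j_2\ge N_2$; the second one is covered because each alias is counted at most once. This gives a bound of
\[ 2\cdot 4\Mbiv(F)\Bigl(\sum_{j_1\ge N_1}\sum_{j_2\ge0}+\sum_{j_1\ge0}\sum_{j_2\ge N_2}\Bigr)\rho_1^{-j_1}\rho_2^{-j_2} \le \frac{8\rho_1\rho_2\Mbiv(F)}{(\rho_1-1)(\rho_2-1)}\bigl(\rho_1^{-N_1}+\rho_2^{-N_2}\bigr). \]
This is within the claimed constant $16$, which leaves room for slack in the normalization of the $a_{j,0}$.

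The main obstacle is bookkeeping rather than substance. One must pin down the exact bivariate Cauchy bound for the Chebyshev coefficients, in particular the factor $2$ or $4$ and the halving for the index $0$. One must also check the aliasing signs and the folding modulo $2N$ at the boundary indices, notably that $T_N$ vanishes at the nodes, which is where the hypothesis $N_i\ge2$ enters. This has to be done carefully enough that the stated constants $4\eta\eta$ and $16$ come out exactly.
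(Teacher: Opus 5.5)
The parts for $k_1,k_2\ge1$ and for the remainder are correct and follow the paper's route: affine reduction, Cauchy bounds on the torus, tensor-product aliasing and geometric sums. Your remainder argument even gives the constant $8$ instead of $16$. The gap is the zero index.

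Your claim that for $k=0$ the factor $2$ of the halving convention is compensated by the $1/2$ gained in the Chebyshev coefficients holds only for the principal alias $j=0$. The other aliases $j=2mN$, $m\ge1$, are nonzero indices, so their coefficients are not halved, yet they are doubled too. In one variable, with your unprimed expansion, the coefficient of $T_0$ in the interpolant is $c_0/2=\sum_{m\ge0}(-1)^m a_{2mN}$, with $|a_0|\le M$ and $|a_{2mN}|\le 2M\rho^{-2mN}$. So aliasing plus Cauchy only gives
\[
|c_0|\le 2M\Bigl(1+2\sum_{m\ge1}\rho^{-2mN}\Bigr)=2M\,\frac{1+\rho^{-2N}}{1-\rho^{-2N}},
\]
which is strictly larger than the target factor $\eta_{\rho,0}=1$. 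In two variables you get $|c_{0,0}|\le 4\Mbiv(F)\prod_{i=1,2}\frac{1+\rho_i^{-2N_i}}{1-\rho_i^{-2N_i}}$ instead of $4\Mbiv(F)$. For $c_{k_1,0}$ you get an extra factor $\frac{1+\rho_2^{-2N_2}}{1-\rho_2^{-2N_2}}$, and this cannot be absorbed into $\eta_{\rho_1,k_1}$. It blows up as $\rho_2\to1^+$, which the hypothesis $\rho_2>1$ allows. Meanwhile, for $k_1=N_1-1$ and $\rho_1$ large, your $x$-majorant $\frac{1+\rho_1^{-2}}{1-\rho_1^{-2N_1}}$ is already essentially $\eta_{\rho_1,k_1}$, so there is no slack. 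The stated bounds with $\eta_{\rho,0}=1$ are therefore not established.

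The missing idea, which is how the paper proceeds, is to handle the zero index by averaging over real nodes rather than by aliasing. The $(0,0)$ coefficient is $c_{0,0}=\frac{4}{N_1N_2}\sum_{\ell_1,\ell_2}F(\mu_{\ell_1},\mu_{\ell_2})$, an average of values at points of $[-1,1]^2$. Hence $|c_{0,0}|\le 4\max_{[-1,1]^2}|F|\le 4\Mbiv(F)$.

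Likewise $c_{k_1,0}=2c_{k_1}(G)$, where $c_{k_1}(G)$ is the one-variable interpolation coefficient of $G(x)=\frac{1}{N_2}\sum_{\ell_2}F(x,\mu_{\ell_2})$. Since $k_1\ge1$, all its aliases are nonzero indices. One-variable aliasing with $|a_j(G)|\le 2\Muniv(G)\rho_1^{-j}$ and $\Muniv(G)\le\Mbiv(F)$ then gives $|c_{k_1,0}|\le 4\Mbiv(F)\rho_1^{-k_1}\eta_{\rho_1,k_1}$. The case $c_{0,k_2}$ is symmetric. With this replacement, the rest of your argument goes through.
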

\begin{proof}
See Appendix~\ref{app:proofscheby}. 
\end{proof}

Note that when $\rho_1 \ge 2, \rho_2 \ge 2$, the term $\frac{\rho_1 \rho_2}{(\rho_1-1)(\rho_2-1)}$ is upper bounded by 4; we shall make use of this simpler version of the bound in the sequel of the paper.

\subsection{Euclidean lattices and the LLL algorithm}
\label{subsec:overview:lattices}
Lattices and lattice basis reduction algorithm have become, over the
last 40 years, a major tool in algorithmic number theory; their main
use is to provide algorithms for finding ``small'' integer linear combination
of arbitrary vectors in $\R^d$. 

\begin{definition}
Let $M \in \NN,$ $M\ge 1$,  a lattice of $\R^M$ is a discrete subgroup of $\R^M$; equivalently,
  a lattice $L \subset \R^M$ is the set of integer linear combinations
  of a family $(b_1, \dots, b_N)$ of $\R$-linearly independent vectors
  of $\R^M$.  We shall then say that $(b_i)_{1\leq i\leq N}$ is a
  basis of $L$, and that $N \leqslant M$ is the dimension (or the rank) of $L$. 
\end{definition}

\begin{proposition} The families $B = (b_i)_{1\leq i\leq N}$,  $C = (c_i)_{1\leq i\leq N}$ are two bases of the same lattice, given   in (row) matrix form if and only if there exists $U\in \mathcal{M}_N(\Z)$,  $\det U \in \{\pm 1\}$,  such that $C = UB$. As a consequence, the quantity $(\det C \transp{C})^{1/2} = (\det B \transp{B})^{1/2}$ is independent of the basis and is associated to the lattice itself -- we shall call it the {\em volume} of the lattice and denote it by $\vol L$.
\end{proposition}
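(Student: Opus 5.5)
The plan is to prove the two directions separately and then deduce the invariance of the volume. Throughout, bases are written as $N\times M$ row matrices $B$ and $C$ of rank $N$, and $L(B)=\Z^N B$ denotes the lattice they generate.

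\emph{Sufficiency.} Suppose $C=UB$ with $U\in\mathcal{M}_N(\Z)$ and $\det U=\pm1$. Every row of $C$ is an integer combination of rows of $B$, so $L(C)\subseteq L(B)$. Since $\det U=\pm 1$, the adjugate formula $U^{-1}=(\det U)^{-1}\operatorname{adj}(U)$ shows that $U^{-1}$ also has integer entries. Then $B=U^{-1}C$ gives $L(B)\subseteq L(C)$, so the two lattices coincide.

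\emph{Necessity.} Suppose $B$ and $C$ are bases of the same lattice $L$. Each $c_i$ lies in $L$, so it is an integer combination of the $b_j$; this gives $C=UB$ with $U\in\mathcal{M}_N(\Z)$. By symmetry, $B=VC$ with $V\in\mathcal{M}_N(\Z)$. Combining, $B=VUB$. Because the rows of $B$ are $\R$-linearly independent, $B$ has full row rank, so $xB=0$ implies $x=0$; hence $(VU-I_N)B=0$ forces $VU=I_N$. Taking determinants, $\det V\det U=1$ with both factors integers, so $\det U=\pm1$.

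\emph{Volume.} From $C=UB$ we get $C\transp{C}=UB\transp{B}\transp{U}$, so $\det(C\transp{C})=(\det U)^2\det(B\transp{B})=\det(B\transp{B})$. Moreover $B\transp{B}$ is the Gram matrix of linearly independent vectors, hence positive definite, so its determinant is positive and the square root is well defined. The quantity $(\det B\transp{B})^{1/2}$ is therefore independent of the chosen basis.

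The only step that needs care is the cancellation $VUB=B\Rightarrow VU=I_N$. It relies on the linear independence of the basis vectors, which is built into the definition of a basis.
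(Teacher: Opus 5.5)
Your proof is correct and complete. The paper states this proposition as standard background without giving a proof, and your argument is the standard one: $U^{-1}$ is integral by the adjugate formula, full row rank gives $VUB=B\Rightarrow VU=I_N$, and $\det(C\transp{C})=(\det U)^2\det(B\transp{B})$. If you prefer not to assume from the outset that $C$ has rank $N$, note that in the sufficiency direction this follows automatically, since $C=UB$ with $U$ invertible has $\R$-linearly independent rows.
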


Given a basis $(b_1,
\dots, b_N)$ of $L$ as input, finding a shortest nonzero vector in $L$
is called the {\em shortest vector problem}. The decision version of
this problem has been shown \cite{Ajtai1998} to be hard under
randomized reductions; in practice, one thus has to content oneself
with approximation algorithms, such as the LLL algorithm~\cite{LLL1982}:
\begin{theorem}[Lenstra, Lenstra, Lov\'asz, 1982]\label{thm:lll82}
  The LLL algorithm,  given $N$ $\R$-linearly independent vectors $(b_1, \dots, b_N) \in \rel^M$ , returns
  a basis $(c_1, \dots, c_N)$ such that $\normeucl{c_1} \leq 2^{(N-1)/4} (\vol L)^{1/N}$, and $\normeucl{c_1}  \leq \left(2^{(N-1)/4}\right)^2 \min_{x\in L-\{0\}} \normeucl{x}$. One also has  $\normeucl{c_2}  \leq 2^{(N-1)/4} (\vol L)^{1/(N-1)}$.

  The time complexity of the LLL algorithm is polynomial in the maximal bit-length of the coefficients of the $b_i$’s, the lattice rank $N$, and the embedding dimension $M$. 
\end{theorem}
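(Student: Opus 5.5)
We take the LLL algorithm with Lov\'asz parameter $\delta = 3/4$ and reason on the Gram--Schmidt orthogonalisation $(c_i^*)$ of the output basis, with coefficients $\mu_{i,j} = \langle c_i, c_j^*\rangle / \normeucl{c_j^*}^2$. We use two defining properties of the output. First, it is size-reduced: $|\mu_{i,j}|\le 1/2$ for $j<i$. Second, it satisfies the Lov\'asz condition $\normeucl{c_{i+1}^*}^2 \ge (\delta - \mu_{i+1,i}^2)\normeucl{c_i^*}^2$. Together these give $\normeucl{c_{i+1}^*}^2 \ge \frac12 \normeucl{c_i^*}^2$, hence $\normeucl{c_j^*}^2 \le 2^{i-j}\normeucl{c_i^*}^2$ for $j \le i$. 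We also use $\vol L = \prod_{i=1}^N \normeucl{c_i^*}$, since $\det C\transp{C}$ equals the Gram determinant of the orthogonalised family.

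\emph{First vector.} Since $c_1 = c_1^*$, we have $\normeucl{c_1}^2 \le 2^{i-1}\normeucl{c_i^*}^2$ for every $i$. Multiplying over $i=1,\dots,N$ gives $\normeucl{c_1}^{2N} \le 2^{N(N-1)/2}(\vol L)^2$, which is the first bound. For the second bound, let $x = \sum_i x_i c_i \in L\setminus\{0\}$ and let $k$ be the largest index with $x_k \neq 0$. The coefficient of $x$ on $c_k^*$ is the integer $x_k$, so $\normeucl{x} \ge \normeucl{c_k^*} \ge 2^{-(k-1)/2}\normeucl{c_1} \ge 2^{-(N-1)/2}\normeucl{c_1}$. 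This is $\normeucl{c_1} \le (2^{(N-1)/4})^2\min \normeucl{x}$.

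\emph{Second vector.} Write $\normeucl{c_2}^2 = \normeucl{c_2^*}^2 + \mu_{2,1}^2\normeucl{c_1^*}^2 \le \normeucl{c_2^*}^2 + \frac14\normeucl{c_1^*}^2 \le \frac32\normeucl{c_2^*}^2$, and then bound $\normeucl{c_2^*}^2 \le 2^{i-2}\normeucl{c_i^*}^2$ for $i \ge 2$. Multiplying over $i=2,\dots,N$ gives
\[
\normeucl{c_2}^{2(N-1)} \le C_N \, \frac{(\vol L)^2}{\normeucl{c_1}^2},
\]
where $C_N$ is an explicit power of $2$ times $(3/2)^{N-1}$. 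We then need a lower bound on $\normeucl{c_1}$. Since the input vectors lie in $\Z^M$, $L \subset \Z^M$, so $\normeucl{c_1} \ge 1$. This leaves $\normeucl{c_2} \le C_N^{1/(2(N-1))}(\vol L)^{1/(N-1)}$.

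\emph{Main obstacle.} This is the step I expect to be delicate: getting exactly the constant $2^{(N-1)/4}$. The crude argument above yields a constant closer to $\sqrt{3/2}\,2^{(N-2)/4}$, which exceeds $2^{(N-1)/4}$. I would first try to repair this by using $\delta$ closer to $1$ in the Lov\'asz condition. Failing that, I would accept a slightly weaker constant of the same form, since only its $2^{O(N)}$ shape matters later in the paper.

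\emph{Complexity.} This is the classical potential argument. Let $D = \prod_{i} \prod_{j\le i}\normeucl{c_j^*}^2$. This is a product of Gram determinants of sublattices, hence a positive integer for integer input, and it is initially at most $(\max\normeucl{b_i})^{N(N+1)}$. Each swap multiplies $D$ by a factor at most $\delta<1$, while size reductions leave $D$ unchanged. So the number of swaps is $O(N^2\log \max\normeucl{b_i})$. Bounding the bit-sizes of the rational numbers appearing in the Gram--Schmidt data, or of their floating-point approximations, by a polynomial in $N$, $M$ and the input length then gives polynomial running time.
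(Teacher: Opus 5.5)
Your proofs of the two bounds on $c_1$, of $\vol L=\prod_i\normeucl{c_i^*}$, and your potential-function sketch of the running time are the standard arguments behind the references the paper only cites, and they are fine.

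\textbf{The gap.} It is in the bound on $c_2$, exactly where you located it. With $\delta=3/4$ your chain only gives $\normeucl{c_2}\le\sqrt{3/2}\,2^{(N-2)/4}(\vol L)^{1/(N-1)}=(9/8)^{1/4}\,2^{(N-1)/4}(\vol L)^{1/(N-1)}$. So the stated inequality is not proved, and your fallback of accepting a weaker constant would prove a different statement.

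\textbf{Your first repair works, but you must carry it out.} For $\delta<1$ and $\alpha=1/(\delta-1/4)$, Lov\'asz plus size-reduction give $\normeucl{c_2}^2\le\delta\alpha\normeucl{c_2^*}^2$. Hence $\normeucl{c_2}\le\sqrt{\delta}\,\alpha^{N/4}(\vol L)^{1/(N-1)}$, which is below $2^{(N-1)/4}(\vol L)^{1/(N-1)}$ for, e.g., $\delta=0.99$ and every $N\ge2$. The $c_1$ bounds only improve, and the potential argument still terminates in polynomial time.

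\textbf{Keeping $\delta=3/4$.} The missing idea is that you exploit integrality only through $\normeucl{c_1}\ge1$. The factor $(3/2)^{N-1}$ is lost only when $\normeucl{c_1}^2\approx2\normeucl{c_2^*}^2$, i.e.\ when $c_2$ is itself short. There you should use that $(\vol L)^2=\det(C\transp{C})$ is a positive integer, so $\vol L\ge1$. Set $a=\normeucl{c_1}^2\ge1$ and $b=\normeucl{c_2^*}^2\ge a/2$. Then $\normeucl{c_2}^2\le b+a/4$ and $b^{N-1}\le2^{(N-1)(N-2)/2}(\vol L)^2/a$, and three cases finish the proof:
\begin{enumerate}
\item If $b\ge a/(4(\sqrt2-1))$, then $\normeucl{c_2}^2\le\sqrt2\,b$, and the product bound together with $a\ge1$ suffices.
\item If $a\ge(9/8)^{(N-1)/2}$, then $\normeucl{c_2}^2\le\frac32b$, and the division by $a$ absorbs the excess $(3/2)^{N-1}2^{-(N-1)/2}=(9/8)^{(N-1)/2}$.
\item Otherwise $\normeucl{c_2}^2<\frac{2+\sqrt2}{4}\,a<2^{(N-1)/2}$, and $\vol L\ge1$ gives $\normeucl{c_2}^{2(N-1)}<2^{(N-1)^2/2}\le2^{(N-1)^2/2}(\vol L)^2$.
\end{enumerate}
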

\begin{proof}
See Theorems 9 and 10 from~\cite[Chap. 2]{LLL+25}, except for the last inequality on $\normeucl{c_2}$ which is a consequence of the proof of Fact 3.3 in~\cite{BonehDurfee2000}.
\end{proof}

The constant 2 in the terms $2^{(N-1)/4}$ of the inequalities of the theorem is arbitrary, and could be replaced by any real number $> 4/3$.

We now discuss shortly an improvement due to Akhavi \& Stehlé~\cite{AkhaviStehle2008} to the LLL algorithm in the case where $N$ is much smaller than $M$, the dimension of the ambient space. Let $A$ be an $N\times M$ matrix, the rows of which generate the lattice~$L$; the idea is to reduce a smaller $N\times N$ matrix obtained by a random projection (i.e., multiplying $A$ on the right by a random $M\times N$ matrix), and apply the same transformation to the original matrix. 
\begin{theorem}[Akhavi \& Stehlé, 2008] \label{thm:AkhaviStehle}
 For all $N$, there is an $n_0(N)$ such that for $M \geq n_0(N)$,  if $P$ is an $M\times N$ matrix whose columns are independent random vectors  picked up uniformly independently inside the $M$-th dimensional  unit ball, and $A' = \mathrm{LLL}(A\cdot P)$; then, with probability  $\geq 1 - 2^{-N}$, the first column vector of the matrix  $A'(A\cdot P)^{-1} A$ is a vector of $L$ of norm $\leq 2^{4N} (\vol L)^{1/N}$.
\end{theorem}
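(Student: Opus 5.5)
The statement is quoted from~\cite{AkhaviStehle2008} and the paper does not prove it; the following is how I would prove it. Throughout I use the row convention of the paper, so the vectors of $L$ are the rows of $A$. The ``first column vector'' in the statement then reads as the first row under this convention.

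\textbf{Step 1: reduce to a conditioning estimate.} Let $V\subset\R^M$ be the $N$-dimensional row space of $A$, and put $B=A\cdot P\in\mathcal{M}_N(\R)$. With probability one $B$ is invertible. Its rows span a lattice $L'=\{yP : y\in L\}$, which is the image of $L$ under the linear map $\phi=P_{|V}:V\to\R^N$. LLL returns $A'=UB$ with $U$ unimodular, so $A'(AP)^{-1}A=UA$ is again a basis of $L$. Its first vector $x$ satisfies $\phi(x)=c_1$, where $c_1$ is the first vector output by LLL on $B$. By Theorem~\ref{thm:lll82},
\[
\normeucl{c_1}\le 2^{(N-1)/4}(\vol L')^{1/N}, \qquad \vol L'=|\det\phi|\,\vol L\le s_{\max}(\phi)^N\,\vol L .
\]
Since also $\normeucl{x}\le \normeucl{\phi(x)}/s_{\min}(\phi)$, we get
\[
\normeucl{x}\le 2^{(N-1)/4}\,\frac{s_{\max}(\phi)}{s_{\min}(\phi)}\,(\vol L)^{1/N}.
\]
So it suffices to show that the condition number $\kappa(\phi)=s_{\max}/s_{\min}$ is at most $2^{4N-(N-1)/4}$ with probability at least $1-2^{-N}$, once $M\ge n_0(N)$.

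\textbf{Step 2: a Gaussian model.} Fix an orthonormal basis of $V$, so that $\phi$ becomes the $N\times N$ matrix $G$ of the coordinates of the $N$ columns of $P$ on $V$. For $p$ uniform in the $M$-dimensional unit ball, the vector $\sqrt{M}\,\pi_V(p)$ converges in distribution to a standard Gaussian vector of $\R^N$ as $M\to\infty$ with $N$ fixed; this is the classical Poincar\'e--Borel phenomenon. The condition number is scale invariant, so $\kappa(G)$ converges in law to $\kappa(\Gamma)$ for an $N\times N$ Ginibre matrix $\Gamma$. Both singular values scale like $1/\sqrt{M}$, so their ratio does not depend on $M$. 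This is the point of using the ratio: the trivial bound $s_{\max}\le\sqrt N$ alone would leave a parasitic factor $\sqrt M$.

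\textbf{Step 3: tail bounds, the main obstacle.} For $\Gamma$, I would first try the following two estimates.
\begin{itemize}
\item Edelman's small-ball bound $\Pr\bigl(s_{\min}(\Gamma)\le \varepsilon/\sqrt N\bigr)\le\varepsilon$, taken with $\varepsilon=2^{-N-2}$.
\item A crude large-deviation bound on $s_{\max}(\Gamma)\le\|\Gamma\|_F$: a $\chi^2$ tail with $N^2$ degrees of freedom gives $\Pr\bigl(\|\Gamma\|_F\ge 2N\bigr)\le 2^{-N-2}$ for $N$ not too small. Small $N$ can be absorbed into the generous constant.
\end{itemize}
Together these give $\kappa(\Gamma)\le 2N^{3/2}2^{N+2}$, which is well below $2^{4N-(N-1)/4}$, with probability at least $1-2^{-N-1}$. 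The total-variation or weak convergence of Step 2, applied to the continuity set $\{\kappa\le t\}$, then loses at most another $2^{-N-1}$ once $M\ge n_0(N)$.

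The delicate part is making this last transfer rigorous: controlling how fast the law of $\sqrt M\,\pi_V(p)$ approaches the Gaussian, so that $n_0(N)$ is explicit. If needed, one can avoid the limit altogether by lower-bounding directly the density of $\pi_V(p)$, which is proportional to $(1-\|y\|^2)^{(M-N)/2}$ on the unit ball of $V$, and redoing Edelman-type estimates for it.
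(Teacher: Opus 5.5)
The paper gives no proof of this statement. It quotes it from \cite{AkhaviStehle2008} and uses it only to motivate the random-projection trick in its implementation. Your argument is therefore a genuinely different route, and it is a correct self-contained proof.

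\textbf{What your route buys.} You reduce everything to the conditioning of the $N\times N$ matrix $G$ of $x\mapsto xP$ restricted to $V$. The bound $\normeucl{x}\le 2^{(N-1)/4}\,\frac{|\det G|^{1/N}}{s_{\min}(G)}(\vol L)^{1/N}$ is right, and using the ratio $s_{\max}/s_{\min}$ rather than the crude $s_{\max}\le\sqrt N$ is exactly what removes the dependence on $M$. The Gaussian step then shows how generous the constant $2^{4N}$ is: you actually get $N^{3/2}2^{(5N+11)/4}(\vol L)^{1/N}$. Citing the original instead buys only brevity.

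\textbf{Points to make explicit or tighten.}
\begin{enumerate}
\item \emph{Rotation invariance.} $n_0$ must not depend on $L$. This holds because the uniform law on the ball is rotation invariant, so the law of $G$ does not depend on $V$. Say so explicitly. Your closing density formula uses this implicitly.
\item \emph{The ``delicate part'' is not delicate.} The statement only asserts that $n_0(N)$ exists, so no rate is needed. The density of $\sqrt M\,\pi_V(p)$ is proportional to $(1-\|z\|^2/M)^{(M-N)/2}$ on $\|z\|\le\sqrt M$. It converges pointwise to the Gaussian density. Scheffé's lemma then gives total-variation convergence for each column. By independence, the total-variation distance between $\sqrt M\,G$ and a Ginibre matrix is at most $N$ times that of one column. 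The transfer therefore holds for the Borel set $\{\kappa\le t\}$ directly, with no continuity-set argument.
\item \emph{Small $N$.} ``Absorbed into the constant'' is the wrong mechanism: what must be controlled for small $N$ is the failure probability, not the norm. In fact no absorption is needed.
  \begin{itemize}
  \item For $N=1$, $\kappa=1$ trivially.
  \item For $N\ge 2$, the Chernoff bound $\Pr(\chi^2_k\ge 4k)\le(4e^{-3})^{k/2}$ with $k=N^2$ is already at most $2^{-N-2}$.
  \end{itemize}
\item \emph{LLL on real input.} Theorem~\ref{thm:lll82} is stated for integer input, whereas $AP$ is real. Your bound on $c_1$ only uses that the output basis is LLL-reduced, which makes sense for real bases, so this is harmless; add one sentence saying so.
\end{enumerate}
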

S.~Torres~\cite{TorresPhD} showed that this idea indeed improves the practical outcome of~\cite{Stehle2006}. As for us, cf. Section~\ref{sec:expresults}, we noticed that in practice this idea works even for simpler models for the random matrix $P$ such as random, uniform $\{0, \pm 1\}$ coefficients, which give equally good results.

The following result will be useful when estimating volume of lattices
generated by the columns of non-square matrices.
It is a slightly improved version of~\cite[Theorem 2]{Stehle2006}\footnote{Note that there is an inaccuracy in the statement of~\cite[Theorem 2]{Stehle2006} : $\sqrt{n}$ should be replaced with $\binom{n}{d}^{1/2} {d!}^{1/2}$.}. Let an $N \times M$ matrix $B$ whose rows span a lattice $L$. 
We assume that the entries of $B$ satisfy: $|B_{i,j} | \leq {\mathfrak r}_i \cdot {\mathfrak c}_j$, for some ${\mathfrak r}_i$’s and ${\mathfrak c}_j$’s,  $0 \leq i \leq N-1$ and $0 \leq j \leq M -1$. As mentioned in~\cite{Stehle2006}, this is typical from Coppersmith-type lattice bases.
\begin{theorem}\label{thm:stehle-ants} Let $B$ be an $N \times M$ matrix (with $M \geq N$), the entries of which are bounded by the product of some quantities ${\mathfrak r}_i$’s and ${\mathfrak c}_j$’s as described above. Let $L$ be the lattice spanned by the rows of the matrix $B$, and $\mathfrak{P}$ the product of the $N$ largest ${\mathfrak c}_j$’s. We have:
  \[
  \vol L = ( \det B \transp{B} )^{1/2} \le \binom{M}{N}^{1/2} N^{N/2} \left ( \prod_{i = 0}^{N-1} {\mathfrak r}_i \right ) \mathfrak{P}.
  \]
\end{theorem}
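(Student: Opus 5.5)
The plan is to combine the Cauchy--Binet formula with the entrywise bound $|B_{i,j}|\le \mathfrak{r}_i\mathfrak{c}_j$. First, Cauchy--Binet gives
\[
\det B\transp{B} = \sum_{S} (\det B_S)^2 ,
\]
where $S$ runs over the $\binom{M}{N}$ subsets of $N$ columns among $\{0,\dots,M-1\}$, and $B_S$ is the $N\times N$ submatrix of $B$ keeping the columns in $S$. It therefore suffices to prove
\[
|\det B_S| \le N^{N/2}\Bigl(\prod_{i=0}^{N-1}\mathfrak{r}_i\Bigr)\mathfrak{P}
\]
for every such $S$. Summing the squares of these bounds gives at most $\binom{M}{N}$ times the square of the right-hand side, and taking the square root yields the claimed inequality.

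To bound a single minor, factor the scalings out of $B_S$. For each $i$, divide row $i$ by $\mathfrak{r}_i$; for each $j\in S$, divide column $j$ by $\mathfrak{c}_j$. We may assume all $\mathfrak{r}_i,\mathfrak{c}_j>0$: if some weight vanishes, the corresponding row or column is zero, and the bound holds trivially or by a limiting argument. Then
\[
\det B_S = \Bigl(\prod_i \mathfrak{r}_i\Bigr)\Bigl(\prod_{j\in S}\mathfrak{c}_j\Bigr)\det C_S ,
\]
where every entry of $C_S$ has absolute value at most $1$. By Hadamard's inequality, each row of $C_S$ has Euclidean norm at most $\sqrt N$, so $|\det C_S|\le N^{N/2}$. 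Finally, $\prod_{j\in S}\mathfrak{c}_j\le\mathfrak{P}$ because $\mathfrak{P}$ is the largest product of $N$ of the $\mathfrak{c}_j$'s.

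There is no real obstacle in this argument. The only point needing care is the bookkeeping for degenerate (zero) weights and checking that the factor is $\binom{M}{N}^{1/2}$, which comes from the number of terms in the Cauchy--Binet sum. This is exactly the correction flagged in the footnote, replacing the $\sqrt{n}$ in~\cite{Stehle2006}.
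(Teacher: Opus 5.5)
Your proposal is correct and follows the paper's proof essentially step by step: the Cauchy--Binet (Lagrange) identity bounds $\det B\transp{B}$ by $\binom{M}{N}$ times the largest squared $N\times N$ minor, and each minor is then bounded by rescaling rows by $\mathfrak{r}_i$ and columns by $\mathfrak{c}_j$, applying Hadamard's inequality, and using $\prod_{j\in S}\mathfrak{c}_j\le\mathfrak{P}$. Zero weights are handled the same way as in the paper: a zero row makes every minor vanish, and a zero column kills any minor containing it.
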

\begin{proof} 
  Let us denote $\C_0, \dots, \C_{M-1}$ the columns of $B$. The classical Lagrange's identity, which is a particular case of Cauchy-Binet formula~\cite{Gramain1984,Knill2014}, then states that
\begin{multline}\label{eq:cauchybinet}
  \det B \transp{B} = \sum_{0 \leq j_1 < \dots < j_N\leq M-1} \det (\C_{j_1}, \dots, \C_{j_N})^2
\\  \leq  \binom{M}{N} \max_{0\leq j_1 < \dots < j_N\leq M-1} \det (\C_{j_1}, \dots, \C_{j_N})^2.
\end{multline}
  We can assume  $\left ( \prod_{i = 0}^{N-1} {\mathfrak r}_i \right ) \neq 0$: otherwise, there is at least one row of $B$ that is identically $0$, hence $\vol L = 0$.

  Now, for a given $0 \leq j_1 < \dots < j_N\leq M -1$, if one of the ${\mathfrak c}_j$ is zero, it follows that at least one column of $(\C_{j_1}, \dots, \C_{j_N})$ is zero, hence   $\det (\C_{j_1}, \dots, \C_{j_N}) = 0$. Otherwise, we consider the matrix  $(\C'_{j_1} \dots \C'_{j_N})$ obtained from  $(\C_{j_1} \dots \C_{j_N})$ after having divided the $i$-th row by ${\mathfrak r}_i$ for all $i = 0, \ldots, N-1$ and the $j_k$-th column by ${\mathfrak c}_{j_k}$ for all $k = 1, \ldots, N$. All the coefficients of  $(\C'_{j_1} \dots \C'_{j_N})$ have an absolute value less or equal to $1$. Hadamard's inequality then implies $ \det (\C'_{j_1}, \dots, \C'_{j_N})^2 \le N^N$. 
  It follows
\[  \det (\C_{j_1}, \dots, \C_{j_N})^2 =  \left ( \prod_{i = 0}^{N-1} {\mathfrak r}_i \right )^2 \left ( \prod_{k = 1}^{N} {\mathfrak c}_{j_k} \right )^2     \det (\C'_{j_1}, \dots, \C'_{j_N})^2  \le   \left ( \prod_{i = 0}^{N-1} {\mathfrak r}_i \right )^2 \mathfrak{P}^2 N^N.
\]
We conclude by combining the last inequality with~\eqref{eq:cauchybinet}.  
\end{proof}

\section{Description and analysis of the algorithms} \label{sec:two-var}

We consider $u, v \in \nat \setminus \{ 0 \}$, $a_1 < b_1$ and $a_2 < b_2$, and  $f : [a_1,b_1] \rightarrow \ree$ a function that is analytic in a neighborhood of $[a_1,b_1]$. In this section, we develop a heuristic algorithmic approach to determine the integers $X, Y$ such that
\begin{equation}\label{eq:prob2var}
 X/u \in [a_1, b_1] \textrm{ and } a_2 <  Y/v - f(X/u) <  b_2.
\end{equation}
Note that this is a mere reformulation of Problem~\ref{probgen}: let $w \in \nat
\setminus \{ 0 \}$, we set $[a,b] = [a_1,b_1]$ and $b_2 = - a_2 = 1/w$. 

Our approach aims at building a trap for these pairs $(X,Y)$, which
consists of two polynomials $P_0, P_1 \in \rel [X_1,X_2]$ such that,
for $i = 0, 1$, for all $x \in [a_1,b_1]$, $t \in [a_2,b_2]$, we have
$| P_i(ux,v(f(x)+t)) | <1$.

Let $X \in \rel$ be such that $X/u =: x_0 \in [a_1,b_1]$ and let  $Y \in \rel$ be such that $Y/v =: f(x_0) + t_0 $ with $t_0 \in [a_2,b_2]$. Then $P_i(ux_0,v(f(x_0)+t_0)) = P_i (X,Y) \in \rel \cap (-1,1) = \{ 0 \}$, that is to say $(X,Y)$ is a common root to $P_0$ and $P_1$. We make the heuristic assumption that $P_0$ and $P_1$ have no nonconstant common factor; in that case the system $P_0(X, Y) = P_1(X,Y) = 0$ has only finitely many solutions which can be found via elimination. We obtain the list of all the integers $X, Y$ that satisfy~\eqref{eq:prob2var}.

The two polynomials $P_0$, $P_1$ are obtained by computing two short vectors
in a lattice built from $f$ using two-dimensional Chebyshev interpolation
theory; this construction is performed by Algorithm~\ref{algo:build_two_var},
described in Subsection~\ref{ssec:algo1}. Algorithm~\ref{algo:2variables}
is our main algorithm; it organizes the whole computation,
using~Algorithm~\ref{algo:build_two_var} as a subroutine. We
describe it in Subsection~\ref{ssec:algo2}, prove its correctness
in Subsection~\ref{sssec:correct2D} and analyze its complexity in Subsection~\ref{subsec:complexity2D}.

\textit{Throughout this section, $N_1, N_2 \ge 2$, $d\ge 2$ will be three integers. We define $N = (d+1)(d+2)/2$.  In order to avoid degenerate situations and trivial output, we shall always assume $N_1 N_2 \ge   N$. We also introduce two positive real parameters $\rho_1, \rho_2$, which we shall assume, for the sake of simplicity\footnote{This assumption causes a small loss of generality regarding functions with singularities very close to the real line}, to be $\ge 2$. Finally, we shall make use of the following notation: for any $x \in \ree$,  $[ x ]_0 = \lfloor x \rfloor$ if $x \geq 0$ and $\lceil x \rceil$ otherwise. }

\subsection{Algorithm~\ref{algo:build_two_var}: building the auxiliary matrix}
\label{ssec:algo1}
Let $a_1<b_1$, $a_2 < b_2$, $f$ be analytic in the neighborhood of  $E_{\rho_1,a_1,b_1}$.  We set  $(f_i)_{0\leq i\leq N-1} = (u^k  x^k v^\ell (f(x) + t)^\ell)_{0\leq k+\ell \leq d}$.   Each  $f_i$ is analytic over $E_{\rho_1,a_1,b_1,\rho_2,a_2,b_2}$.  If we interpolate $f_i$ by $Q_i (x,t) \in \ree_{N_1-1,N_2-1} [x,t]$ at pairs of Chebyshev nodes $\mu_{k_1,k_2} = (\mu_{k_1,N_1-1,[a_1,b_1]},$ $ \mu_{k_2,N_2-1,[a_2,b_2]})_{\substack{0 \le k_1 \le N_1 -1,\\  0 \le k_2 \le N_2-1}} $, cf. Section~\ref{subsec:overview:lebesgue},  the corresponding coefficients
$ ( c_{k_1,k_2,i} )_{\substack{0 \le k_1 \le N_1 -1,\\0 \le k_2 \le N_2 -1}}$ are given by
\begin{equation*}
  ( c_{k_1,k_2,i} )_{\substack{0 \le k_1 \le N_1 -1,\\0 \le k_2 \le N_2 -1}} = \frac{4}{N_1 N_2} \textrm{2D-DCT-II} \left ( (f_i(\mu_{N_1 - 1 - \ell_1,N_2 - 1 - \ell_2}) )_{\substack{0 \le \ell_1 \le N_1 -1,\\0 \le \ell_2 \le N_2 -1}}\right )
\end{equation*}
from~\eqref{eq:coeffs-interpol2D}.

Let  $A = \left(A_1 | A_2\right)$ 
 be the $N\times (N_1N_2+N)$ matrix defined by 
 \[
   A_1  = \left ( \frac{c_{k_1,k_2,i}}{2^{\delta_{0k_1}+\delta_{0k_2}}} \right )_{\substack{0 \le i \le N - 1,\\ 0 \le k_1 \le N_1 -1,\, 0 \le k_2 \le N_2-1}},  A_2 = \left( \delta_{ij}  R_{i}   \right)_{ 0\leq i, j\leq N-1}, \label{eq:A1A2spec2D} 
 \]
where $[R_{i}, i = 0, \ldots, N -1]$ is the ordered list
\begin{multline*}
  \left [ 64 u^k v^\ell \Muniv(x)^k \Mbiv(f(x)+t)^\ell \left ( \frac{1}{\rho_1^{N_1}}+ \frac{1}{\rho_2^{N_2}}\right); 
       \ell = 0, \ldots, d, k = 0, \ldots, d - \ell \right].
\end{multline*}
The definitions of $\Muniv$ and $\Mbiv$ are given in Notation~\ref{not:M1M2}.
Recall from Proposition~\ref{prop:ineqcauchy2D} that $\norminf{ f_i - Q_i  } \le A_2[i,i],$ $ i = 0, \ldots, N-1.$
\medskip

The following lemma gathers the main properties of the matrix $A$. The
first part follows from the linearity of Chebyshev interpolation, while the
second is a consequence of the last part of Proposition~\ref{prop:ineqcauchy2D}, assuming $\rho_1, \rho_2 \ge 2$; it is proved in more detail in the proof of~Theorem~\ref{thm:alternants_interpolants2D}.
\begin{lemma}
  Let $w = (w_i)_{0\le i \le N-1} \in \rel^{N}$, $w^{(1)} = \left(w^{(1)}_{k_1, k_2}\right)_{0\le k_1 \le N_1-1, 0\le k_2 \le N_2-1} := wA_1$,
  $w^{(2)} = (w^{(2)}_j)_{0\le j\le N-1}  := wA_2$. Let $Q$ be the Chebyshev
  interpolant of degrees (at most) $N_1-1, N_2-1$ of $F_w = \sum_{i=0}^{N-1} w_i f_i$
  over $[a_1, b_1]\times [a_2, b_2]$. Then, we have 
    \begin{align*}
      Q & = \sum_{\substack{0\le k_1 \le N_1-1, \\ 0\le k_2 \le N_2 - 1} } w^{(1)}_{k_1, k_2}
      T_{k_1, [a_1, b_1], k_2, [a_2, b_2]}, \\
      \max_{(x, t)\in [a_1,  b_1]\times [a_2, b_2]} & |F_w(x, t) - Q(x, t)|  \le
      \sum_{j=0}^{N-1} |w^{(2)}_j|.
    \end{align*}
\end{lemma}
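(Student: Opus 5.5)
The plan has two parts: the first identity comes from linearity of Chebyshev interpolation, and the error bound comes from applying Proposition~\ref{prop:ineqcauchy2D} to each $f_i$ separately and recombining with the triangle inequality.

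\emph{Interpolant.} Interpolation at the fixed grid of Chebyshev node pairs $\mu_{k_1,k_2}$ is a linear map from functions on the grid to $\ree_{N_1-1,N_2-1}[x,t]$. Hence the interpolant of $F_w=\sum_i w_i f_i$ is $Q=\sum_i w_i Q_i$. Write each $Q_i$ in the form of Section~\ref{subsec:overview:lebesgue}, namely
\[
Q_i=\sumprime_{k_1}\sumprime_{k_2} c_{k_1,k_2,i}\,T_{k_1,[a_1,b_1]}(x)T_{k_2,[a_2,b_2]}(t).
\]
The primes halve the terms with $k_1=0$ or $k_2=0$. This is exactly the factor $2^{-\delta_{0k_1}-\delta_{0k_2}}$ built into $A_1$. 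So the coefficient of $T_{k_1,[a_1,b_1]}T_{k_2,[a_2,b_2]}$ in $Q$ is $\sum_i w_i A_1[i,(k_1,k_2)]=w^{(1)}_{k_1,k_2}$. This gives the first identity, and it is pure bookkeeping.

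\emph{Error bound.} Use $F_w-Q=\sum_i w_i(f_i-Q_i)$ and the triangle inequality to get
\[
\norminfabab{F_w-Q}\le\sum_i |w_i|\,\norminfabab{f_i-Q_i}.
\]
Since $A_2$ is diagonal, $w^{(2)}_i=w_iR_i$, so it suffices to show $\norminfabab{f_i-Q_i}\le R_i$ for each $i$. Apply the last part of Proposition~\ref{prop:ineqcauchy2D} to $f_i(x,t)=u^kx^k v^\ell(f(x)+t)^\ell$, which is analytic near $E_{\rho_1,a_1,b_1,\rho_2,a_2,b_2}$. Because $\rho_1,\rho_2\ge2$, the factor $\rho_1\rho_2/((\rho_1-1)(\rho_2-1))$ is at most $4$, and so
\[
\norminfabab{f_i-Q_i}\le 64\,\Mbiv(f_i)\left(\rho_1^{-N_1}+\rho_2^{-N_2}\right).
\]

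\emph{Bounding $\Mbiv(f_i)$.} It remains to show $\Mbiv(f_i)\le u^kv^\ell\Muniv(x)^k\Mbiv(f(x)+t)^\ell$. On the product region, $|u^kz_1^k|\le u^k\Muniv(x)^k$, because $z_1$ ranges over $E_{\rho_1,a_1,b_1}$, the domain of $\Muniv$. Likewise $|v^\ell(f(z_1)+z_2)^\ell|\le v^\ell\Mbiv(f(x)+t)^\ell$. The maximum of a product is at most the product of the maxima, which gives the claim.

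The only point needing care, though it is not a real obstacle, is to confirm that this convention matches the one behind the definition of $R_i$: $\Muniv(x)$ is the maximum of $|z|$ over the ellipse region, and each factor is maximized separately. With that checked, the proof is complete.
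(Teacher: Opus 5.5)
Your proof is correct and follows essentially the paper's route: linearity of Chebyshev interpolation for the first identity, and Proposition~\ref{prop:ineqcauchy2D} with $\rho_1,\rho_2\ge 2$ for the error bound. The one small difference is where the triangle inequality is applied. In the proof of Theorem~\ref{thm:alternants_interpolants2D}, the paper applies Proposition~\ref{prop:ineqcauchy2D} once to $F_w$ and then bounds $\Mbiv(F_w)\le\sum_i|w_i|\Mbiv(f_i)$, while you apply it to each $f_i$ and sum the errors. Both give the same final bound.
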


In order to avoid numerical instability, we replace the matrix $A$ by
a rounded version of it. We introduce
$\hat{A} = (\hat{A}_1 |\hat{A}_2 )$:
\[
  \hat{A}_1  = \left ( \left [ 2^{\Prec} {A_1}[i,j]  \right ]_0/ 2^{\Prec} \right )_{\substack{0\leq i\leq N-1, \\ 0\leq j\leq N_1N_2-1  }},
   \hat{A}_2 = \left( \lfloor 2^{\Prec}  {A_2}[i,j] \rfloor/ 2^{\Prec}  \right)_{ 0\leq i, j\leq N-1},
\]
where $\Prec = \lceil - \log_2 (\min_{0 \le  i \le N-1} {A_2}[i,i])   + \log_2 (N) \rceil + 2$. The integer $\Prec$ guarantees that $\hat{A}$ preserves the important properties from $A$. In particular, as $2^{\Prec}  {A_2}[i,i] \geq 2^{2} N $,
we have
\begin{equation}\label{eq:a2tprec}
  {\hat{A}_2}[i,i]  \geq 2^{2-\Prec} N > 0.
\end{equation}
This shows that the rows of $\hat{A}$ are linearly independent and define an $N$-dimensional lattice in $\R^{N_1N_2+N}$.

We now derive a simpler equivalent expression for $\Prec$. 
In order to do so, we assume that the set $u [a_1,b_1]$,
resp. $v( f([a_1,b_1]) + [a_2,b_2])$, contains at least one nonzero
integer $n_x$, resp. $n_f$.  Again, this assumption is made without
loss of generality with respect to our problem, since if the
assumption does not hold the problem is trivial.
 \begin{lemma}\label{lem:prec2D}  We have
   \[
     \Prec  = \lceil - \log_2 (R_{0})   + \log_2 (N) \rceil + 2  = \left \lceil - \log_2(\rho_1^{-N_1} + \rho_2^{-N_2}) + \log_2 (N)  \right \rceil - 4
\] 
  and
\[
   \frac{8}{N}(\rho_1^{-N_1} + \rho_2^{-N_2})
     \le 2^{-\Prec} \le \frac{16}{N} (\rho_1^{-N_1}  + \rho_2^{-N_2}).
     \]
\end{lemma}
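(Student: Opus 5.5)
The plan is to identify which $R_i$ is smallest, show that it equals $64(\rho_1^{-N_1}+\rho_2^{-N_2})$, and then unwind the ceiling in the definition of $\Prec$.

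\emph{Locating the minimum.} By definition, $R_i = 64\, u^k v^\ell \Muniv(x)^k \Mbiv(f(x)+t)^\ell(\rho_1^{-N_1}+\rho_2^{-N_2})$, and $R_0$ corresponds to $k=\ell=0$, so $R_0 = 64(\rho_1^{-N_1}+\rho_2^{-N_2})$. It therefore suffices to show that $u\Muniv(x)\ge 1$ and $v\Mbiv(f(x)+t)\ge 1$; then every $R_i\ge R_0$.

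For the first factor, $\Muniv(x)=\max_{z\in E_{\rho_1,a_1,b_1}}|z|\ge \max_{x\in[a_1,b_1]}|x|$, because the region $E_{\rho_1,a_1,b_1}$ contains $[a_1,b_1]$. By the standing assumption there is a nonzero integer $n_x\in u[a_1,b_1]$, so $u\max_{[a_1,b_1]}|x|\ge |n_x|\ge 1$.

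For the second factor, $E_{\rho_1,a_1,b_1,\rho_2,a_2,b_2}\supset[a_1,b_1]\times[a_2,b_2]$. Hence $v\Mbiv(f(x)+t)\ge v\sup_{x,t}|f(x)+t|\ge|n_f|\ge 1$, using the nonzero integer $n_f\in v(f([a_1,b_1])+[a_2,b_2])$. One should check that $n_f$ may lie at an endpoint of the open range in $t$; closedness of the region handles this.

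\emph{Evaluating $\Prec$.} Since $\log_2 64=6$,
\[
\Prec=\left\lceil -\log_2(\rho_1^{-N_1}+\rho_2^{-N_2})-6+\log_2 N\right\rceil+2,
\]
which equals the stated expression with $-4$ after pulling the integer $6$ out of the ceiling.

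\emph{The bounds on $2^{-\Prec}$.} Write $s=\rho_1^{-N_1}+\rho_2^{-N_2}$ and $y=-\log_2 s+\log_2 N$. From $y\le\lceil y\rceil<y+1$ we get $y-4\le \Prec<y-3$. Therefore
\[
2^{-\Prec}\in\left(2^{3-y},\,2^{4-y}\right]=\left(\tfrac{8s}{N},\,\tfrac{16s}{N}\right],
\]
which gives both stated inequalities.

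The only step that is not routine is the minimality of $R_0$. It relies on the nonzero-integer assumptions and on the ellipse region containing the real interval; everything else is arithmetic.
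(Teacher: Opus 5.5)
Your proposal is correct and follows the paper's route: the paper's proof likewise rests on $u\Muniv(x)\ge |n_x|\ge 1$ and $v\Mbiv(f(x)+t)\ge |n_f|\ge 1$ to conclude $\min_i R_i=R_0=64(\rho_1^{-N_1}+\rho_2^{-N_2})$. It leaves implicit the ceiling arithmetic giving the $-4$ form and the bounds on $2^{-\Prec}$, which you carry out correctly; your version even yields the strict lower bound $2^{-\Prec}>\frac{8}{N}(\rho_1^{-N_1}+\rho_2^{-N_2})$.
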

\begin{proof}
  Under our assumption, it comes
  $v ( \Mbiv(f(x)+t) )\ge  |n_f| \ge 1$ and  $ u \Muniv(x) \geq  |n_x| \geq 1$. It then follows $R_{i} \ge 64 (\rho_1^{-N_1} + \rho_2^{-N_2}) = R_{0}$ for all $i$. Therefore, we get  $\Prec = \lceil - \log_2 (R_{0})   + \log_2 (N) \rceil + 2.$
\end{proof}

This leads to the description of Algorithm~\ref{algo:build_two_var}. 

\begin{algorithm}[htp]
	\centering
	\begin{algorithmic}[1]
				
		\REQUIRE Four real numbers $ a_1 <  b_1, a_2 < b_2 $, $f$ a transcendental function analytic in a complex neighborhood of $[a_1,b_1]$, five positive integers $d, N_1, N_2, u, v$, two real numbers $\rho_1, \rho_2 \ge 2$ such that $N_1, N_2 \geq 2$, $N_1 N_2\geq N := (d+1)(d+2)/2$ and $64 (\rho_1^{-N_1}  +  \rho_2^{- N_2} ) v \Mbiv(f(x)+t) < 1$.
		
		\ENSURE Two matrices $M_c\in \mathcal{M}_{N,N_1N_2} (\rel), M_r \in \mathcal{M}_N (\rel)$, where $N= (d+1)(d+2)/2$, respectively storing scaled values of the coefficients and the remainders, an integer $\Prec$ which is the truncation precision.
		
                \STATE $R_0 \leftarrow   64 \left(\rho_1^{-N_1} + \rho_2^{- N_2} \right)$,  $\Prec \leftarrow \lceil - \log_2 (R_0) + \log_2 (N)  \rceil + 2$ \label{step:prec2D}
                
                \COMMENT Computation of the Chebyshev nodes, listed in reverse order

                \STATE $L_{cheb,x} \leftarrow \left[\frac{b_1-a_1}{2} \cos \left( (j+1/2)\frac{\pi}{N_1} \right) +  \frac{a_1 + b_1}{2}\right]_{0\leq j\leq N_1-1}$

                \STATE $L_{cheb,t} \leftarrow \left[\frac{b_2-a_2}{2} \cos \left( (j+1/2)\frac{\pi}{N_2} \right) +  \frac{a_2 + b_2}{2}\right]_{0\leq j\leq N_2-1}$
                    
                \STATE $M_c \leftarrow [0]_{N \times N_1 N_2} ; M_r \leftarrow  [0]_{N \times N} $

                \STATE $B_x \leftarrow \left | \frac{a_1+b_1}{2} \right | +\frac{b_1-a_1}{4} (\rho_1 + \rho_1^{-1}) $, $B_t \leftarrow  \rho_2 \max(|a_2|,|b_2|)$
                
                \STATE  $g \leftarrow \left (x \mapsto \left  | f \left (\frac{a_1+b_1}{2}+\frac{b_1-a_1}{4} (\rho_1 \exp(ix)+\rho_1^{-1} \exp(-ix)) \right ) \right |\right)$
                \STATE $B_f \leftarrow \max \left ( g([0,2\pi]) \right ), i \leftarrow 0$ \label{step:B_f2D}

                \FOR{$\ell =0$ \TO $d$} \label{step:for_ell2D}
                    \FOR{$k=0$ \TO $d - \ell $} \label{step:for_k2D}
 
                        \STATE $\varphi \leftarrow  ( (x,t) \mapsto (u x)^k (v (f(x) + t))^\ell)$  \label{step:varphi2D}
                    
                        \COMMENT We compute the coefficient matrix : for each function, we compute its value at points of $L_{cheb,x} \times L_{cheb,t}$, use DCT and scale.

                        \STATE $ U \leftarrow \frac{4}{N_1 N_2} \textrm{2D-DCT-II}\left (  (\varphi ( L_{cheb,x}[\ell_1], L_{cheb,t} [\ell_2] ))_{\substack{0 \le \ell_1 \le N_1 -1, \\ 0 \le \ell_2 \le N_2 -1} }\right)$, \label{step:dct2D}

                        \FOR{$k_1 = 0$ \TO $N_1 -1$}

                            \FOR{$k_2 = 0$ \TO $N_2 -1$}

                               \STATE $ M_c[i,k_2 + k_1 N_2] \leftarrow U[k_1,k_2]$.
         
                               \ENDFOR
                               \ENDFOR
                       \FOR{$k_1 = 0$ \TO $N_1 -1$}
                            \STATE $ M_c[i,k_1 N_2] \leftarrow \frac{1}{2} M_c[i,k_1 N_2]$         
                        \ENDFOR
     \FOR{$k_2 = 0$ \TO $N_2 -1$}
     \STATE $ M_c[i,k_2] \leftarrow \frac{1}{2} M_c[i,k_2]$
     \ENDFOR

                        \FOR{$j = 0$ \TO $N_1 N_2 -1$}

                            \STATE $M_c[i,j] \leftarrow \left [  2^{\Prec} M_c[i,j] \right ]_0$\label{step:mc2D}                 
                               
                        \ENDFOR
                            
                        \COMMENT We compute the scaled remainder matrix.                

                        \STATE   $M_r [i,i] \leftarrow \left  \lfloor 2^{\Prec} R_0  (u B_x)^k (v (B_f + B_t))^\ell \right \rfloor$, $i \leftarrow i+1$ \label{step:mr2D} 
                    \ENDFOR                        
                \ENDFOR\label{step:end_for_ell2D}

                \STATE Return $M_c, M_r, \Prec$
	\end{algorithmic}
	\caption{Computation of the lattice to be reduced}
	\label{algo:build_two_var}
\end{algorithm}

\subsection{Algorithm~\ref{algo:2variables}: solving the problem}
\label{ssec:algo2}
As discussed previously, Algorithm~\ref{algo:2variables} solves the
problem by calling Algorithm~\ref{algo:build_two_var} as a
subroutine. If the lattice obtained from the latter contains two
sufficiently short vectors, we (attempt to) use elimination theory to
solve the corresponding bivariate polynomial system. The details are
described in~Algorithm~\ref{algo:2variables}.

\begin{algorithm}[htp]
	\centering
	\begin{algorithmic}[1]
				
                \REQUIRE Four real numbers $ a_1 <  b_1, a_2 < b_2 $, $f$ a transcendental function analytic in a complex neighborhood of $[a_1,b_1]$, five positive integers $d, N_1, N_2, u, v$, two real numbers $\rho_1, \rho_2 \ge 2$  such that $N_1, N_2 \geq 2$, $N_1 N_2\geq N := (d+1)(d+2)/2$ and $64 (\rho_1^{-N_1}  +  \rho_2^{- N_2} )  v \Mbiv(f(x)+t)< 1$.	  
		\ENSURE If successful, return a list $\mathcal{L}$ such that $\mathcal{L} \supset \big \{ X \in \rel$ such that $a_1 \le X/u \le b_1$ and there exists $Y \in \rel$, $\frac{Y}{v} \in \left [ f\left (\frac{X}{u} \right ) + a_2, f\left (\frac{X}{u} \right ) + b_2 \right ]\big \}$. 
		\STATE $(M_c,M_r, \Prec) \leftarrow$ Algorithm~\ref{algo:build_two_var} ($a_1, b_1, a_2, b_2, f, d, N_1, N_2, u, v, \rho_1, \rho_2$), \label{step:call_algo3}

               \STATE $M_{LLL} \leftarrow$ LLL-reduce the rows of $( M_c\augment M_r)$                \label{step:lll2D}
                
               \STATE $U\leftarrow  M_{LLL,r} M_r^{-1}$ // This is the LLL change of basis matrix; $M_{LLL,r}$ is the right part of the matrix $M_{LLL}$. Note that $M_r$ is diagonal. \label{step:changebasismat2D}

                \IF{$\max ( \normeucl{( M_{LLL}[0,j] )_{0\le j \le N+ N_1N_2 -1}} , \normeucl{( M_{LLL}[1,j] )_{0\le j \le N+ N_1N_2-1}}) \leq 2^{\Prec} /(N+N_1N_2)$} \label{step:cond_lll2D}

                \STATE $L_m \leftarrow [X_1^k X_2^\ell$ for $k = 0$ to $d-\ell$ for $\ell = 0$ to $d]$ \label{step:monomials2D}
                \COMMENT List of monomials, ordered in a way compatible with Algorithm~\ref{algo:build_two_var}, Steps~\ref{step:for_ell2D}--\ref{step:varphi2D}.
                \STATE $P_0 \leftarrow \sum_{j=0}^{N-1} U[0,j]L_m[j]$, $P_1 \leftarrow \sum_{j=0}^{N-1} U[1,j]L_m[j]$ \label{step:aux_poly2var}
    
                    \STATE $R(X_1) \leftarrow \mathrm{Res}_{X_2} (P_0(X_1,X_2), P_1(X_1,X_2))$
                    \IF{$R(X_1) \neq 0$}
                       \STATE $\mathcal{L} \leftarrow \{ t \in \rel; R(t) = 0 \}$ 

                       \STATE  return $\mathcal{L}$

                    \ELSE

                       \STATE return ``FAIL'' \label{step:fail_coprime2D}
                    \ENDIF

                \ELSE
                    
                    \STATE return ``FAIL'' \label{step:fail_small2D}

                \ENDIF

	\end{algorithmic}
	\caption{Computation of the solutions to Problem~\ref{probgen}}
	\label{algo:2variables}
\end{algorithm}

We point that the implementation of Algorithms~\ref{algo:build_two_var}
and~\ref{algo:2variables} requires to consider several practical issues,
see~\Cref{sssec:overst2D}~and~\Cref{sssec:rounding2D}. Several optimizations
should also be considered, as computing fewer DCTs (see~\Cref{sssec:eff2D})
or the use of Newton polynomials (see~\Cref{sssec:newton2D}) leading to denser
lattices, hence shorter vectors.

\subsection{Proof of correctness}
\label{sssec:correct2D}
We shall now prove the correctness of Algorithm~\ref{algo:2variables}.

\subsubsection{Uniformly small polynomials in the vicinity of a transcendental analytic curve}

The following theorem is central in the proof of correctness of~Algorithm~\ref{algo:2variables}.
\begin{theorem}\label{thm:alternants_interpolants2D}
  Let $d \geq 1$ be an integer, $N= (d+1)(d+2)/2$, $u, v >0$.
  Let $\rho_1 \ge 2$, $\rho_2 \ge 2$, $a_1<b_1$, $a_2 < b_2$, $f$ be analytic in the neighborhood of $E_{\rho_1,a_1,b_1}$, $N_1, N_2 \ge 2$, and $N \leq N_1 N_2$.    Let $\Lambda = (\lambda_{k,\ell})_{0\leq k+\ell \leq d} 
 \in \Z^N$ be such that $\normeucl{\Lambda \hat{A}} \leq 1/(N+N_1N_2)$, and let $P(X,Y) = \sum_{0\leq k+\ell \leq d} \lambda_{k,\ell} X^k Y^\ell$, we have 
\[
   \max_{\substack{x\in [a_1, b_1], \\t\in [a_2, b_2]}} |P(u x, v (f(x) + t))| < 1.
   \]   
\end{theorem}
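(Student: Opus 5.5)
The plan is to write $P(ux, v(f(x)+t)) = F_\Lambda(x,t) := \sum_i \lambda_i f_i(x,t)$ and split it as $F_\Lambda = Q + (F_\Lambda - Q)$. Here $Q$ is the Chebyshev interpolant of $F_\Lambda$ of degrees $N_1-1$, $N_2-1$ on $[a_1,b_1]\times[a_2,b_2]$. By linearity of interpolation, $Q=\sum_i \lambda_i Q_i$. We then bound $|Q|$ through $\Lambda A_1$ and $|F_\Lambda - Q|$ through $\Lambda A_2$. Since the hypothesis concerns $\hat A$ rather than $A$, we first compare $\Lambda A$ with $\Lambda \hat A$.

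\textbf{Step 1 (the interpolant).} The coefficients of $Q$ in the basis $T_{k_1,[a_1,b_1]}(x)T_{k_2,[a_2,b_2]}(t)$ are exactly the entries of $\Lambda A_1$; the halving in $A_1$ accounts for the primed sums. Since $|T_k|\le 1$ on the interval, we get $\max|Q| \le \normun{\Lambda A_1}$.

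\textbf{Step 2 (the remainder).} By Proposition~\ref{prop:ineqcauchy2D} with $\rho_1,\rho_2\ge 2$, the bound for each $f_i$ is $16\cdot 4\,\Mbiv(f_i)(\rho_1^{-N_1}+\rho_2^{-N_2})$. Moreover $\Mbiv(f_i) \le u^k v^\ell \Muniv(x)^k \Mbiv(f(x)+t)^\ell$. So $\norminf{f_i-Q_i}\le R_i = A_2[i,i]$. Therefore $\max|F_\Lambda-Q| \le \sum_i |\lambda_i| R_i = \normun{\Lambda A_2}$, which is the lemma stated after the definition of $A$. Summing Steps 1 and 2 gives
\[
\max|P(ux,v(f(x)+t))| \le \normun{\Lambda A}.
\]

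\textbf{Step 3 (passing from $A$ to $\hat A$).} This is the main obstacle.

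For $A_2$: we have $0\le A_2[i,i]-\hat A_2[i,i] < 2^{-\Prec}$. Also $\hat A_2[i,i] \ge 2^{2-\Prec}N \ge 4N\,2^{-\Prec}$ by~\eqref{eq:a2tprec}. Hence $A_2[i,i] \le \hat A_2[i,i](1+1/(4N))$, and so $\normun{\Lambda A_2}\le (1+\tfrac{1}{4N})\normun{\Lambda\hat A_2}$.

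For $A_1$: the rounding error per entry is below $2^{-\Prec}$, so
\[
\normun{\Lambda(A_1-\hat A_1)} \le N_1N_2\,2^{-\Prec}\normun{\Lambda}.
\]
We control $\normun\Lambda$ using the diagonal part. Since $|\lambda_i| \hat A_2[i,i] \le \normeucl{\Lambda\hat A} \le 1/(N+N_1N_2)$ and $\hat A_2[i,i]\ge 4N 2^{-\Prec}$, we get $|\lambda_i|\,2^{-\Prec} \le 1/(4N(N+N_1N_2))$. Summing over $i$ gives $2^{-\Prec}\normun\Lambda \le 1/(4(N+N_1N_2))$, so the $A_1$ error is at most $N_1N_2/(4(N+N_1N_2)) < 1/4$.

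\textbf{Step 4 (conclusion).} By Cauchy--Schwarz,
\[
\normun{\Lambda\hat A}\le \sqrt{N+N_1N_2}\,\normeucl{\Lambda\hat A}\le (N+N_1N_2)^{-1/2}\le 1/2,
\]
since $N\ge 3$. Combining the steps,
\[
\normun{\Lambda A}\le (1+\tfrac1{4N})\normun{\Lambda\hat A} + \tfrac14 \le \tfrac{13}{24}+\tfrac14 < 1,
\]
with strict inequality throughout. If the constants turn out tighter than estimated, the $\ell^1$/$\ell^2$ comparison is where I would refine the argument. The slack built into $\Prec$ (the $+2$ and $\log_2 N$) should suffice.
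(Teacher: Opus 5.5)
Your proof is correct and follows the paper's route. You bound $\max|P(ux,v(f(x)+t))|$ by $\normun{\Lambda A}$ via the interpolant/remainder split, then absorb the rounding error $2^{-\Prec}\normun{\Lambda}$ using the lower bound $\hat A_2[i,i]\ge 2^{2-\Prec}N$ and Cauchy--Schwarz. The only difference is bookkeeping: you treat the $A_1$ and $A_2$ blocks separately and reach $19/24$, whereas the paper bounds every coordinate uniformly by $\frac{1}{4N}\normun{\Lambda\hat A_2}$ and reaches $(N+N_1N_2)^{-1/2}+\frac{1}{4\sqrt N}$.
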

\begin{proof}
  We first turn the assumption on $\|\Lambda \hat{A}\|_2$ to a fact on
  $\|\Lambda A\|_1$. For $j =0, \ldots, N + N_1 N_2 -1$, we have
  \begin{align*}
    | (\Lambda A)[j] -  (\Lambda \hat{A})[j] |_1 & \leq \sum_{0\leq k+\ell \leq d}  |\lambda_{k,\ell}| 2^{-\Prec}\\
    & \stackrel{(\ref{eq:a2tprec})}{\le} \sum_{0\leq k + \ell \leq d}     |\lambda_{k,\ell}| \min_i \hat{A}_2[i,i] \frac{2^{-2}}{N}\\
    & \le \frac{1}{4N} \normun{\Lambda\hat{A}_2}.
  \end{align*}
  Thus, we have
  \begin{align}
    \normun{\Lambda A} & \leq \normun{\Lambda \hat{A}} + (N+N_1N_2)  \frac{\normun{\Lambda\hat{A}_2}}{4N} \notag \\ 
& \le (N + N_1 N_2)^{1/2} \| \Lambda \hat{A}\|_2 + \frac{(N + N_1 N_2)}{4\sqrt{N}} \| \Lambda \hat{A}_2\|_2 \notag \\
    & \le \frac{1}{(N+N_1N_2)^{1/2}} + \frac{1}{4N^{1/2}} 
     < 1 \textrm{ for $N \ge 3$, $N_1, N_2 \ge 2$, } \label{ineq:un}
    \end{align}
thanks to the assumption  $\normeucl{\Lambda\hat{A}} \leq 1/(N+N_1N_2)$.
  
  Let now $P$ be as in the statement of the theorem, and
  \[
  Q(x, t) = \sum_{\substack{0 \le j_1 \le N_1 - 1, \\ 0 \le j_2 \le N_2 - 1}} q_{j_1,j_2} T_{j_1,[a_1,b_1]} (x) T_{j_2,[a_2,b_2]} (t)
  \]
  be the interpolation polynomial for $P(u x, v (f(x) + t))$ at the order $(N_1, N_2)$ pairs of Chebyshev nodes of the first kind. Then, the coordinates of $\Lambda A_1$ are exactly $q_{j_1,j_2}$, $0\leq j_1\leq N_1-1, 0\leq j_2\leq N_2-1$.

Proposition~\ref{prop:ineqcauchy2D} shows that
\begin{align*}
  \max_{\substack{x\in [a_1, b_1], \\t\in [a_2, b_2]}} | Q(x,t) & - P(ux, v(f(x)+t))| \leq  64\Mbiv(P)\left( \frac{1}{\rho_1^{N_1}} + \frac{1}{\rho_2^{N_2}} \right)\\
  & \leq 64 \sum_{0\leq k+\ell\leq d} |\lambda_{k,\ell}| u^k v^\ell \Muniv(x)^k \Mbiv(f(x)+t)^\ell \left( \frac{1}{\rho_1^{N_1}} + \frac{1}{\rho_2^{N_2}} \right) \\
  & = \|\Lambda A_2 \|_1.
\end{align*}
As $\max_{x\in [a_i,b_i]} |T_{k,[a_i,b_i]}(x)| = 1$ for all $k$, we have
\[
\max_{\substack{x\in[a_1,b_1], \\ t\in [a_2, b_2]}} |Q(x,t)| \le \sum_{\substack{0\leq j_1\leq N_1 - 1, \\ 0\leq j_2\leq N_2-1}} |q_{j_1,j_2}|,
  \]
  which leads to 
\begin{align*}
  \max_{\substack{x\in[a_1,b_1], \\ t\in [a_2, b_2]}} |P(ux, v(f(x)+t))|  & \leq \max_{\substack{x\in[a_1,b_1],\\t\in[a_2, b_2]}}|Q(x,t)| + \|\Lambda A_2 \|_1 \\ 
   & \leq \sum_{\substack{0\leq j_1\leq N_1 - 1, \\ 0\leq j_2\leq N_2-1}} |q_{j_1,j_2}|+ \|\Lambda A_2\|_1\\
  & =  \normun{\Lambda A} < 1, 
   \end{align*}
 which concludes the proof.
\end{proof}

We deduce the following corollary.
\begin{corollary}\label{cor:main2D}
  Under the assumptions and notations of the previous theorem, for all
  $x, y$ such that $ux, vy \in \Z$, we have either $P(ux, vy) = 0$, or
  $y \not\in [f(x) + a_2, f(x) + b_2]$.
\end{corollary}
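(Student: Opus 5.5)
The corollary follows from Theorem~\ref{thm:alternants_interpolants2D} by an integrality argument. I will argue by contraposition. Fix $x, y$ with $ux \in \Z$ and $vy \in \Z$, and suppose that $y \in [f(x)+a_2, f(x)+b_2]$; the goal is then to show $P(ux, vy) = 0$.

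Set $t := y - f(x)$, so that $t \in [a_2, b_2]$ and $vy = v(f(x)+t)$. One point needs attention: the theorem controls $P(ux, v(f(x)+t))$ only for $x \in [a_1,b_1]$. So I would either read the corollary as implicitly restricting to $x \in [a_1, b_1]$, since $f$ is only defined there (which is the setting of~\eqref{eq:prob2var}), or state that restriction explicitly. With $x \in [a_1,b_1]$ and $t \in [a_2,b_2]$, Theorem~\ref{thm:alternants_interpolants2D} applies directly and gives
\[
|P(ux, vy)| = |P(ux, v(f(x)+t))| < 1 .
\]

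It remains to use integrality. Since $\Lambda \in \Z^N$, the polynomial $P$ has integer coefficients $\lambda_{k,\ell}$. Evaluating it at the integer point $(ux, vy)$ therefore gives an integer. The only integer of absolute value $<1$ is $0$, so $P(ux,vy)=0$, which is the first alternative of the statement. If instead $P(ux,vy) \neq 0$, the contrapositive of the same reasoning forces $y \notin [f(x)+a_2, f(x)+b_2]$.

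I do not expect any real obstacle. The only care needed is the domain issue mentioned above, together with the remark that the closed interval $[a_2,b_2]$ used in the statement agrees with the closed strip on which the theorem gives its bound.
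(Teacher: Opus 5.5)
Your proof is correct and is the paper's argument. The theorem bounds $|P(ux, v(f(x)+t))|$ by $1$ on the closed strip, and $P(ux,vy)$ is an integer, so it must vanish. The paper spells this out at the start of Section~\ref{sec:two-var}, where $x$ is likewise implicitly restricted to $[a_1,b_1]$, as you point out.
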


\begin{remark}\label{rem:reste_f2D} The proof of Theorem~\ref{thm:alternants_interpolants2D} yields in particular that
  \[
  1 > \normun{\Lambda A} \geq\\ \sum_{0\leq k+\ell\leq d} |\lambda_{k,\ell}| \underbrace{64  u^k \Muniv(x)^kv^\ell \Mbiv(f(x)+t)^\ell  \left( \frac{1}{\rho_1^{N_1}} + \frac{1}{\rho_2^{N_2}} \right)}_{=: Q_{k,\ell}}.
  \]
 It follows that $\lambda_{k,\ell} = 0$ for any $k, \ell$ such that $ Q_{k,\ell} > 1 $.

 As the proof of Lemma~\ref{lem:prec2D} shows in particular that
 \[ R_{i} \ge    64 v \Mbiv(f(x)+t) \left( \frac{1}{\rho_1^{N_1}} + \frac{1}{\rho_2^{N_2}} \right) = R_{d+2}\] for all $i\geq d+2$, we see that if $R_{d+2} \ge 1$, we must have $R_{i} \ge 1$ for all $i \geq d+2$ so that $\lambda_{k,\ell} = 0$ for any $1 \leq \ell \leq d$, $0 \leq k \leq d - \ell$. In that case, the only functions taken into account are the $u^k x^k$'s and  the method is bound to fail. This explains the condition $64 (\rho_1^{-N_1}  +  \rho_2^{- N_2} )  v \Mbiv(f(x)+t)< 1$ in the input of Algorithms~\ref{algo:build_two_var} and \ref{algo:2variables}, which is a necessary condition for the success of Algorithm~\ref{algo:2variables}.
\end{remark}  

\subsubsection{The auxiliary function $\psi$}
In order to state our theorems in a precise way, we need to introduce an auxiliary function $\psi$. This function appears in the study of integral points in regions ${\mathcal K}_{s} = \{ (x, y) / x + \gamma y \le s, x \ge 0, y \ge 0\}$,
in~\Cref{app:phipsi} where we undertake a  detailed study of $\psi$. 

\begin{definition}
Let $\phi$ be the bijection 
\begin{align*}
\phi: [1, +\infty) & \rightarrow [1, +\infty)\\
x & \mapsto (1 + \lfloor x \rfloor) (x - \lfloor x \rfloor /2). 
\end{align*}

For $\lambda \in [1, \infty)$, we define $\psi(\lambda)$ as
\[
  \psi(\lambda) = \frac{1 + \lfloor \phi^{-1}(\lambda) \rfloor}{12 \lambda}
  \left( 6\phi^{-1}(\lambda)^2 - \lfloor \phi^{-1}(\lambda) \rfloor - 2
  \lfloor \phi^{-1}(\lambda) \rfloor^2\right) \textrm{ for all } \lambda \geq 1.
\]
\end{definition}

In order to try to achieve a balance between optimality and intuition, we have introduced in~\Cref{eq:g} the function $g$ defined by
\begin{equation*}
  g : x \in [1,+\infty) \mapsto \frac{1}{3} \left ( \frac{2x}{1 + \left \lfloor  \sqrt{1/4+2x} -1/2 \right \rfloor}  + \left \lfloor  \sqrt{1/4+2x} -1/2 \right \rfloor \right ) - 1/6.
\end{equation*}

\begin{proposition}\label{prop:asymppsi}
  We have $\psi(x) \ge g(x)$ for $x\in [1, +\infty)$, $\psi(1) = g(1)$ and $\psi(\lambda) - g(\lambda) = O(\lambda^{-1/2})$ for $\lambda \rightarrow \infty$. As a consequence, $\psi(\lambda) = 2\sqrt{2\lambda}/3 + O_{+\infty}(1)$. 
\end{proposition}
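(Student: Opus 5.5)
The plan is to reparametrize everything in terms of $n = \lfloor \phi^{-1}(\lambda) \rfloor$ and reduce the comparison of $\psi$ and $g$ to an explicit one-variable identity on each interval where $n$ is constant.

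\emph{Step 1: identify the integer parts.} Put $y = \phi^{-1}(\lambda)$ and $n = \lfloor y\rfloor \ge 1$. From the definition of $\phi$, $\lambda = (n+1)(y - n/2)$, so
\[
y = \frac{\lambda}{n+1} + \frac{n}{2}.
\]
Since $\phi$ is increasing, $y \in [n, n+1)$ corresponds exactly to $\lambda \in [n(n+1)/2, (n+1)(n+2)/2)$. The latter condition is equivalent to $n \le \sqrt{1/4+2\lambda} - 1/2 < n+1$. Hence the floor appearing in the definition of $g$ is the same integer $n$, and
\[
g(\lambda) = \frac{1}{3}\left(\frac{2\lambda}{n+1} + n\right) - \frac16 .
\]

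\emph{Step 2: an explicit formula for $\psi - g$.} Set $s = \lambda/(n+1)$; the range of $\lambda$ above gives $s \in [n/2, (n+2)/2)$. Substitute $y = s + n/2$ into $\psi$ and use $(n+1)/(12\lambda) = 1/(12 s)$. Expanding $6y^2 = 6s^2 + 6sn + 3n^2/2$ gives
\[
\psi(\lambda) = \frac{s}{2} + \frac{n}{2} - \frac{n^2+2n}{24 s}, \qquad g(\lambda) = \frac{2s}{3} + \frac{n}{3} - \frac16 .
\]
Multiplying the difference by $24s$ and completing the square, I expect to obtain
\[
24 s\,(\psi(\lambda) - g(\lambda)) = -4s^2 + 4s(n+1) - n^2 - 2n = 1 - (2s - n - 1)^2 .
\]
This algebra is the only place where care is needed, and it is routine.

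\emph{Step 3: the three claims.}
\begin{itemize}
\item Since $2s - n - 1 \in [-1, 1)$, we get $0 \le \psi - g \le 1/(24 s)$. This proves $\psi \ge g$ on $[1,+\infty)$.
\item At $\lambda = 1$ we have $n = 1$ and $s = 1/2$, so $2s - n - 1 = -1$ and therefore $\psi(1) = g(1) = 1/2$.
\item As $\lambda \to \infty$, $n \sim \sqrt{2\lambda}$ and $s \ge n/2$, so $1/(24s) = O(\lambda^{-1/2})$.
\end{itemize}

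\emph{Step 4: the asymptotic consequence.} Because $s \in [n/2, n/2 + 1)$, we have $g(\lambda) = 2n/3 + O(1)$. The defining inequalities for $n$ give $n = \sqrt{2\lambda} + O(1)$. Hence $g(\lambda) = 2\sqrt{2\lambda}/3 + O(1)$, and adding the bound from Step 3 yields the same estimate for $\psi$.

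The main obstacle is only Step 1, namely checking that the two floor functions coincide. Once that is established, everything else is an exact computation.
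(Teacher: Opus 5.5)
Your proof is correct and is essentially the paper's argument. The paper writes $\psi(\varphi(z)) = \frac{2}{3}z - \frac16 + \frac{\{z\}-\{z\}^2}{6\{z\}+3\lfloor z\rfloor}$ and identifies $g = \frac23\varphi^{-1} - \frac16$ through the same floor computation you do in Step 1; your identity $24s(\psi-g) = 1-(2s-n-1)^2$ is exactly that formula written with $s = \{z\}+n/2$. The only difference is that you bound the error by $1/(24s)$ instead of the paper's uniform constant $(2-\sqrt{3})/6$, and $1/(24s)$ is all the statement needs.
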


Note that all the results stated in terms of $\psi$ in the sequel also
hold by replacing $\psi$ by $g$ in a slightly degraded version -- as
$\max_{x\ge 1} (\psi(x) - g(x)) \le (2-\sqrt{3})/6 \approx 0.045$, see~\Cref{app:phipsi}.

\subsubsection{Proof of success of Algorithm~\ref{algo:2variables}}
\label{sssec:success2D}

If we apply the LLL lattice basis reduction algorithm to $\hat{A}$, we obtain:
\begin{corollary}\label{cor:LLLapplies2D}
  Assume that $\det (\hat{A} \transp{\hat{A}})^{1/2(N-1)} \leq \frac{2^{-(N-1)/4 - \Prec/(N-1)}}{N+N_1N_2}$; then   Theorem~\ref{thm:alternants_interpolants2D} applies with $\Lambda$ any of the first two vectors of an LLL-reduced basis of the lattice generated by the rows of $\hat{A}$.
\end{corollary}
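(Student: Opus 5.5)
The plan is to reduce to the integer setting of Theorem~\ref{thm:lll82} by clearing the denominator $2^{\Prec}$, and then rescale the resulting bounds back.

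\textbf{Scaling to an integer lattice.} By construction, every entry of $\hat{A}$ lies in $2^{-\Prec}\Z$. So $B := 2^{\Prec}\hat{A} = (M_c \augment M_r)$ is an integer matrix, and its rows are linearly independent by \eqref{eq:a2tprec}. Let $L$ and $L'$ denote the lattices generated by the rows of $\hat{A}$ and of $B$ respectively. Then $L' = 2^{\Prec}L$, so
\[
\vol L' = 2^{N\Prec}\vol L = 2^{N\Prec}\det(\hat{A}\transp{\hat{A}})^{1/2}.
\]
Moreover, $\det(B\transp{B})$ is a positive integer, since $B$ is integral of full rank $N$. Hence $\vol L' \ge 1$.

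\textbf{Applying LLL.} Running LLL on $B$ gives a basis $(c_1,\dots,c_N)$ of $L'$, with $c_i = \Lambda_i B$ for integer vectors $\Lambda_i\in\Z^N$ (the rows of the unimodular change of basis matrix). Theorem~\ref{thm:lll82} gives
\[
\normeucl{c_2}\le 2^{(N-1)/4}(\vol L')^{1/(N-1)}, \qquad \normeucl{c_1}\le 2^{(N-1)/4}(\vol L')^{1/N}.
\]
Because $\vol L'\ge 1$, we have $(\vol L')^{1/N}\le(\vol L')^{1/(N-1)}$. So both $c_1$ and $c_2$ satisfy the $(N-1)$-th root bound.

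\textbf{Rescaling and concluding.} Dividing by $2^{\Prec}$, the vectors $\Lambda_i\hat{A}=2^{-\Prec}c_i$ for $i=1,2$ satisfy
\[
\normeucl{\Lambda_i\hat{A}} \le 2^{-\Prec}\,2^{(N-1)/4}\,2^{N\Prec/(N-1)}\det(\hat{A}\transp{\hat{A}})^{1/2(N-1)} = 2^{(N-1)/4+\Prec/(N-1)}\det(\hat{A}\transp{\hat{A}})^{1/2(N-1)}.
\]
By the hypothesis of the corollary, the right-hand side is at most $1/(N+N_1N_2)$. Since $\Lambda_i\in\Z^N$, Theorem~\ref{thm:alternants_interpolants2D} applies to $\Lambda_1$ and $\Lambda_2$.

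The only delicate point is the first vector. The LLL bound for $c_1$ uses the exponent $1/N$, not $1/(N-1)$, and the bound is upgraded using $\vol L'\ge 1$, which comes from integrality of the scaled lattice.
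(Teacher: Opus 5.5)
Your proof is correct and takes essentially the same route as the paper's. You rescale $\hat{A}$ by $2^{\Prec}$ to obtain an integer matrix, use integrality of $\det\bigl(2^{2\Prec}\hat{A}\transp{\hat{A}}\bigr)$ to upgrade the $1/N$-th root LLL bound on the first vector to the $1/(N-1)$-th root bound, and then divide back by $2^{\Prec}$. Your explicit observation that this determinant is a \emph{positive} integer, hence $\geq 1$ (the rows are linearly independent by \eqref{eq:a2tprec}), is slightly more careful than the paper, which only notes that the determinant is an integer.
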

\begin{proof} 
  Let $\tilde{A}= 2^{\Prec} \hat{A} \in \mathcal{M}_{N, N+N_1N_2} (\rel)$. 
  Theorem~\ref{thm:lll82} 
 then implies    that if $w_1$ and $w_2$ denote these first two vectors, we have     
\[
\normeucl{2^{\Prec} w_i} \leq 2^{(N-1)/4} \max \left( \det ( \tilde{A} \transp{\tilde{A}})^{1/(2(N-1))}, \det (\tilde{A} \transp{\tilde{A}})^{1/(2N)}\right ), i = 1, 2.
\]
As $\tilde{A}$ is an integer matrix, its determinant is an integer, so that
$\det( \tilde{A} \transp{\tilde{A}})^{1/(2(N-1))}\ge \det (\tilde{A} \transp{\tilde{A}})^{1/(2N)}$; we thus have
\[
2^{\Prec}  \normeucl{w_i} \leq  2^{(N-1)/4}  2^{N\Prec/(N-1)} \det (\hat{A} \transp{\hat{A}})^{1/(2(N-1))}, 
\]
hence
\[
\normeucl{w_i} \leq  2^{(N-1)/4}2^{\Prec/(N-1)} \det (\hat{A} \transp{\hat{A}})^{1/(2(N-1))} \leq  \frac{1}{N+N_1N_2}.
\]
\end{proof}

In order to understand which precise role the size of the intervals plays in our algorithms, we now study the case $\rho_1 = K_1/(b_1-a_1)$ and similarly $\rho_2 = K_2/(b_2 - a_2)$, where $K_1 \ge 2(b_1 - a_1)$ and $K_2 \ge 2(b_2 - a_2)$ are fixed real numbers (note that $\rho_1, \rho_2 >2$).  We further
assume $\rho_1^{N_1} \leq \rho_2^{N_2}$.

We introduce $K'_1 = K_1 + |a_1 + b_1|$, $K'_2 = K_2 + |a_2 + b_2|$,
and note, for helping the intuition of the reader, that $K'_1, K'_2$
play the role of $B_x, B_t$ in Algorithm~\ref{algo:build_two_var} and
thus roughly correspond to upper bounds on $x$, $t$.

For the sake of brevity, in the following proof we abbreviate $\Delta_{N,N_1, N_2, [a_1,b_1],[a_2, b_2], \rho_1, \rho_2}$ in $\Delta$.

\begin{proposition}\label{thm:1/(b-a)_2D}
  Let $f$ be analytic in a neighborhood of the closed disc $\cDun = \{ z \in \comp: |z -  (a_1+b_1)/2| \le  K_1/2 \}$, $d$ be an integer $\geq 2$, $N = (d+1)(d+2)/2$,  $\rho_1 = K_1/(b_1-a_1) \geq 2$, $\rho_2 = K_2/(b_2 - a_2) \geq 2$,    $\gamma = \log \rho_2 / \log \rho_1\in [3,N]$, $N_1 =  \lfloor \sqrt{2 \gamma N} \rfloor, N_2 = \lceil \sqrt{2N/\gamma} \rceil$ two integers. Let $M_{\cDun}(f) := \max_{z\in \cDun}|f(z)|$.

Then, for $d\rightarrow \infty$, if 
\begin{equation}\label{eq:1/(b-a)_2D}
b_1-a_1 < K_1 2^{O\left(- \sqrt{N/\gamma}\right)}
\left( uvK'_1 \left(M_{\cDun} (f) + K'_2 \right)\right)^{-\frac{d}{3\psi (N/\gamma) \gamma} (1 + O(1/d))},
\end{equation}
we have $\Delta^{1/(N-1)}  < \frac{2^{-(N-1)/4 - \Prec/(N-1)}}{N + N_1N_2}$.
\end{proposition}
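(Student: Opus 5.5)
We need to bound $\Delta = \det(\hat A\transp{\hat A})^{1/2}$, and the natural tool is Theorem~\ref{thm:stehle-ants} applied to $\hat A$. First we supply row and column bounds.

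\textbf{Row and column bounds.} Row $i$ corresponds to $(k,\ell)$, and we take ${\mathfrak r}_i = u^k v^\ell (K'_1)^k (M_{\cDun}(f)+K'_2)^\ell$. This dominates $\Muniv(x)^k\Mbiv(f(x)+t)^\ell$, because $E_{\rho_1,a_1,b_1}\subset \cDun$, so that $\Muniv(x)\le K'_1$ and $\Mbiv(f+t)\le M_{\cDun}(f)+K'_2$. The column bounds come in two kinds.
\begin{itemize}
\item Columns of $\hat A_1$ indexed by $(k_1,k_2)$ get ${\mathfrak c}=C\rho_1^{-k_1}\rho_2^{-k_2}$. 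This follows from the Cauchy inequality of Proposition~\ref{prop:ineqcauchy2D}, with $\eta\le 5/3$ and the rounding error $2^{-\Prec}$ absorbed thanks to \eqref{eq:a2tprec}.
\item The $N$ diagonal columns of $\hat A_2$ get ${\mathfrak c}=64(\rho_1^{-N_1}+\rho_2^{-N_2})\le 128\rho_1^{-N_1}$, using $\rho_1^{N_1}\le\rho_2^{N_2}$.
\end{itemize}
With these, Theorem~\ref{thm:stehle-ants} gives
\[
\Delta\le \binom{N+N_1N_2}{N}^{1/2}N^{N/2}C^N\prod_i {\mathfrak r}_i\cdot\mathfrak P .
\]
Here $\mathfrak P$ is the product of the $N$ largest column bounds, i.e.\ the $N$ largest among $\rho_1^{-k_1-\gamma k_2}$ and $\rho_1^{-N_1}$. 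The row product is $\prod_i{\mathfrak r}_i=(uvK'_1(M+K'_2))^{d\,N/3}$ up to the exact split, since $\sum_{k+\ell\le d}k=\sum\ell=dN/3$. It is clean to bound it by $B^{dN/3}$ with $B=uvK'_1(M+K'_2)\ge1$.

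\textbf{The lattice-point count (main obstacle).} We need to show $\mathfrak P\le \rho_1^{-N\,\psi(N/\gamma)\gamma(1+O(1/d))}$, or a comparable sum. Writing $\log_{\rho_1}$ of the $N$ largest entries, we must minimize $\sum(k_1+\gamma k_2)$ over $N$ distinct pairs in the box $[0,N_1)\times[0,N_2)$, padding with $N_1$ if needed. The minimum is attained by the integer points of the triangle ${\mathcal K}_s=\{k_1+\gamma k_2\le s\}$ with $\#{\mathcal K}_s\cap\Z^2\approx N$, and this is exactly where $\phi$ and $\psi$ from \Cref{app:phipsi} enter. I expect the appendix to give $\sum_{{\mathcal K}_s}(k_1+\gamma k_2)=\gamma N\psi(N/\gamma)$, up to lower-order terms, when the count equals $N$.

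One must check that the box $N_1=\lfloor\sqrt{2\gamma N}\rfloor$, $N_2=\lceil\sqrt{2N/\gamma}\rceil$ contains this triangle. The triangle's legs are about $\sqrt{2\gamma N}$ and $\sqrt{2N/\gamma}$, and $\gamma\ge3$ ensures $N_2\ge2$. Boundary effects cost only $O(1/d)$ relative error, or an additive $O(\sqrt{N\gamma})$ in the exponent. Any remainder columns have exponent $N_1\ge$ the relevant $s$, so they do not spoil the minimum. This counting and its uniformity in $\gamma\in[3,N]$ is the delicate step, and I would lean on Proposition~\ref{prop:asymppsi} and the appendix for it.

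\textbf{Combining.} We require $\Delta^{1/(N-1)}(N+N_1N_2)2^{(N-1)/4+\Prec/(N-1)}<1$. By Lemma~\ref{lem:prec2D}, $2^{\Prec}\le N\rho_1^{N_1}/8$, so $2^{\Prec/(N-1)}=\rho_1^{O(\sqrt{\gamma/N})}$. The remaining factors are:
\begin{itemize}
\item $\binom{\cdot}{\cdot}^{1/2}$, $N^{N/2}$, $C^N$ and $2^{N/4}$, which together contribute $2^{O(N\log N)}$ before taking the $1/(N-1)$ root;
\item the row product, which contributes $B^{d/3(1+O(1/N))}$;
\item $\mathfrak P^{1/(N-1)}$, which contributes $\rho_1^{-\gamma\psi(N/\gamma)(1+O(1/d))}$.
\end{itemize}
Taking logarithms, the condition becomes
\[
\gamma\psi(N/\gamma)(1+O(1/d))\log\rho_1 > \tfrac d3\log B + O(\log N) .
\]
Since $\psi(N/\gamma)\gamma\gg\sqrt{N\gamma}$, dividing gives $\log\rho_1>\frac{d}{3\gamma\psi(N/\gamma)}(1+O(1/d))\log B+O(\sqrt{N/\gamma})$. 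Here the $O(\log N)/(\gamma\psi)$ term is absorbed into $O(1/\sqrt{N\gamma})$, which is weaker than what is stated. Substituting $\rho_1=K_1/(b_1-a_1)$ yields exactly \eqref{eq:1/(b-a)_2D}.
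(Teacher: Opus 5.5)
Your overall plan is the paper's. You bound the volume by Theorem~\ref{thm:stehle-ants} with row bounds $u^kv^\ell (K'_1)^k(M_{\cDun}(f)+K'_2)^\ell$ and column bounds $\rho_1^{-k_1-\gamma k_2}$ and $\rho_1^{-N_1}$, which is what Theorem~\ref{thm:majodet-ext2D} and Corollary~\ref{cor:bnd_with_alpha2D} package. You then take $\Omega_\gamma(N,N_1,N_2)=\psi(N/\gamma)N\gamma+O(N)$ from the appendix, control $2^{\Prec}$ with Lemma~\ref{lem:prec2D}, and solve for $\rho_1$. Your handling of $\Prec$ through $\rho_1^{-N_1}$ is slightly sharper than the paper's, which passes through $\rho_2^{-N_2}$ and $\gamma N_2\le(1+\sqrt2)N$. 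Either way it disappears into the $(1+O(1/d))$, because $\gamma\psi(N/\gamma)\gg\sqrt{N\gamma}\gg d$.

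The combining step, however, is wrong as written. In the target inequality (from Corollary~\ref{cor:LLLapplies2D}) the LLL factor $2^{(N-1)/4}$ multiplies $\Delta^{1/(N-1)}$; it is not under the $(N-1)$-th root. So it cannot be lumped with $\binom{N+N_1N_2}{N}^{1/2}$, $N^{N/2}$ and $C^N$, which do shrink to $2^{O(\log N)}$ after the root. After taking logarithms, the sufficient condition is
\[
\bigl(\gamma\psi(N/\gamma)-O(1)\bigr)\log\rho_1>\tfrac d3\bigl(1+O(1/N)\bigr)\log B+\tfrac{N-1}{4}\log 2+O(\log N),
\]
so the right-hand side carries a term of order $N$, not $O(\log N)$. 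Dividing by $\gamma\psi(N/\gamma)\asymp\sqrt{N\gamma}$ turns $\frac{N-1}{4}\log 2$ into a term of order $\sqrt{N/\gamma}$. This uses the lower bound $\psi\ge g$ of Proposition~\ref{prop:asymppsi}, and the resulting term is exactly the factor $2^{O(-\sqrt{N/\gamma})}$ in \eqref{eq:1/(b-a)_2D}.

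So the $O(\sqrt{N/\gamma})$ in your last display does not follow from your preceding line. It is not slack you are conceding (``weaker than what is stated''); it is the price of the LLL approximation factor, as the paper notes after Theorem~\ref{thm:cplx2D}. The stronger conclusion your computation suggests, with only $O(\log N/\sqrt{N\gamma})$ in the exponent of $2$, is not supported by these bounds. Incidentally, $O(\log N)/(\gamma\psi)$ is not $O(1/\sqrt{N\gamma})$ either. Keeping $2^{(N-1)/4}$ outside the root repairs the argument and gives exactly the stated condition.

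A minor point: no rounding error needs to be absorbed in the column bounds. $\hat A$ is obtained by truncation toward zero, so $|\hat A[i,j]|\le|A[i,j]|$ (Remark~\ref{rem:hatA2D}). An additive $2^{-\Prec}$ could not have been absorbed into $C\rho_1^{-k_1}\rho_2^{-k_2}$ for large $k_1+\gamma k_2$ anyway.
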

\begin{proof}
  Since $\rho_1 = K_1/(b_1-a_1) \geq 2$, we have ${E_{\rho_1, a_1, b_1}} \subset
  \cDun$. 
  Thanks to Corollary~\ref{cor:bnd_with_alpha2D}, in
  view of $(\rho_i/(\rho_i - 1))^{N/(N-1)} \leq 2^{3/2}$ for $i\in \{1, 2\}$,
  we have
\[
    \Delta^{1/(N-1)} \le  2^{O(1)} N^{1/2+o(1)} 
 \frac{(uvK'_1)^{d/3 + O(1)}}{\rho_1^{\psi(N/\gamma) \gamma + O(1) }}  \left(M_{\cDun} (f) + K'_2\right)^{d/3 + O(1)}.
\]

Note that, using Lemma~\ref{lem:prec2D}, as $\rho_1^{N_1} \le \rho_2^{N_2}$,
\begin{multline*}
  2^{-\Prec}  \ge \frac{4}{N}\left( \rho_1^{-N_1} + \rho_2^{-N_2} \right) \ge \frac{8}{N} \rho_2^{-N_2} = \frac{8}{N} \rho_1^{-\gamma N_2} \\
   \ge 2^{-o(N)} \rho_1^{-O(N)}, \textrm{ as } \gamma N_2 < \sqrt{2N\gamma} + \gamma \le N(1 + \sqrt{2}).
\end{multline*}

Thus, for $\Delta^{1/(N-1)} < 2^{-(N-1)/4-\Prec/(N-1)} / (N + N_1N_2)$, it suffices that $\Delta^{1/(N-1)} < 2^{-O(N)} \rho_1^{-O(1)}$, or again that
\[
\rho_1  > 2^{O\left( \frac{N}{\psi(N/\gamma)\gamma}\right)}
 \left( uvK'_1 \left(M_{\cDun} (f) + K'_2\right)\right)^{\frac{d}{3\psi (N/\gamma) \gamma} (1 + O(1/d))}.
\]
\end{proof}

\begin{corollary}\label{cor:1/(b-a)_2D}
  Under the assumptions of Proposition~\ref{thm:1/(b-a)_2D},   Algorithm~\ref{algo:2variables} over $[a_1, b_1]$ and  $[a_2, b_2]$ produces at Step~\ref{step:aux_poly2var} two polynomials $P_0$, $P_1$ such that
  \[
  \max_{x\in [a_1, b_1],\, t\in [a_2, b_2]} |P_i(ux, v(f(x)+t))| < 1 \textrm{ for } i\in \{0,1\}.
  \]
  In particular,  Algorithm~\ref{algo:2variables} never  executes Step~\ref{step:fail_small2D} and its output is valid.
\end{corollary}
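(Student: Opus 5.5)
The proof chains together results already stated: Proposition~\ref{thm:1/(b-a)_2D}, Corollary~\ref{cor:LLLapplies2D}, and Theorem~\ref{thm:alternants_interpolants2D}. The quantity $\Delta$ in Proposition~\ref{thm:1/(b-a)_2D} is the volume of the lattice spanned by the rows of $\hat{A}$, that is $\det(\hat{A}\transp{\hat{A}})^{1/2}$, bounded via Corollary~\ref{cor:bnd_with_alpha2D}. So I first identify $\Delta^{1/(N-1)}$ with $\det(\hat{A}\transp{\hat{A}})^{1/(2(N-1))}$, or at least use it as an upper bound for it. Under hypothesis~\eqref{eq:1/(b-a)_2D}, Proposition~\ref{thm:1/(b-a)_2D} then gives
\[
\det(\hat{A}\transp{\hat{A}})^{1/(2(N-1))} < \frac{2^{-(N-1)/4-\Prec/(N-1)}}{N+N_1N_2}.
\]
This is exactly the hypothesis of Corollary~\ref{cor:LLLapplies2D}.

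Next I connect this to the algorithm. Step~\ref{step:lll2D} reduces $(M_c \augment M_r)$, and by construction in Algorithm~\ref{algo:build_two_var} this matrix equals $2^{\Prec}\hat{A}$. The only things to check are that $M_r$ is built with the bounds $B_x$, $B_t$, $B_f$ matching $\Muniv(x)$ and $\Mbiv(f(x)+t)$, and that the halving of the $k_1=0$ and $k_2=0$ columns matches the definition of $A_1$. Since $M_r$ is diagonal, the two first reduced rows are $2^{\Prec}\Lambda_i\hat{A}$ for integer vectors $\Lambda_i = U[i,\cdot]$, $i=0,1$. Corollary~\ref{cor:LLLapplies2D} gives $\normeucl{\Lambda_i \hat{A}} \le 1/(N+N_1N_2)$. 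Hence the test at Step~\ref{step:cond_lll2D} succeeds, so Step~\ref{step:fail_small2D} is never executed.

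The polynomials $P_0$ and $P_1$ formed at Step~\ref{step:aux_poly2var} have coefficient vectors $\Lambda_0$ and $\Lambda_1$, with monomials ordered consistently with the rows. Theorem~\ref{thm:alternants_interpolants2D} then yields $|P_i(ux, v(f(x)+t))| < 1$ on $[a_1,b_1]\times[a_2,b_2]$. Validity of the output follows from Corollary~\ref{cor:main2D}. Every solution $(X,Y)$ is a common zero of $P_0$ and $P_1$. So whenever the resultant $R$ is nonzero, $R(X)=0$, and hence $X\in\mathcal L$. If $R=0$ the algorithm returns FAIL, which is allowed.

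The main obstacle is bookkeeping rather than mathematics. One must check that the $\Delta$ bounded in Proposition~\ref{thm:1/(b-a)_2D} really dominates the volume of the rounded lattice $\hat{A}$ as actually computed, with the rounded $M_r$ entries and the bounds $B_x$, $B_f$ in place of the exact maxima. It is also necessary to confirm that the input condition $64(\rho_1^{-N_1}+\rho_2^{-N_2})v\Mbiv(f(x)+t)<1$ holds under~\eqref{eq:1/(b-a)_2D}. For the first point, I would argue that $B_x \ge \Muniv(x)$ and $B_f+B_t \ge \Mbiv(f+t)$, and that flooring only decreases entries, so that Theorem~\ref{thm:stehle-ants} applied to $\hat{A}$ gives the same bound as for $A$. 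For the second, the smallness of $\rho_1^{-N_1}$ forced by~\eqref{eq:1/(b-a)_2D} should be enough.
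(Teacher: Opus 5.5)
Your proof is correct and follows the paper's own argument, which simply chains Proposition~\ref{thm:1/(b-a)_2D}, Corollary~\ref{cor:LLLapplies2D} and Theorem~\ref{thm:alternants_interpolants2D}. Your extra bookkeeping only makes explicit what the paper leaves implicit: transferring the volume bound to $\hat{A}$ via Remark~\ref{rem:hatA2D}, reading off $\Lambda_i$ from $U$, checking the test at Step~\ref{step:cond_lll2D}, and the resultant argument for validity.

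One small correction to your last paragraph concerns the direction of the inequalities. $B_x\ge\Muniv(x)$ and $B_f+B_t\ge\Mbiv(f(x)+t)$ are what keep Theorem~\ref{thm:alternants_interpolants2D} valid for the computed remainder column. The volume bound needs the opposite inequalities: $B_x\le\frac{b_1-a_1}{2}\rho_1+\left|\frac{a_1+b_1}{2}\right|$ and $B_f+B_t\le\Muniv(f)+\frac{b_2-a_2}{2}\rho_2+\left|\frac{a_2+b_2}{2}\right|$. These are the quantities used in Corollary~\ref{cor:bnd_with_alpha2D}, and the second holds when $a_2=-b_2$, as in the application.
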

\begin{proof}
It suffices to apply~Proposition~\ref{thm:1/(b-a)_2D}, Corollary~\ref{cor:LLLapplies2D}, and Theorem~\ref{thm:alternants_interpolants2D}.
\end{proof}  

Note again that $P_0$ and $P_1$ may not be coprime, in which case the algorithm returns ``FAIL'' at Step~\ref{step:fail_coprime2D}. This is what makes the algorithm heuristic. 

\subsection{Complexity analysis}\label{subsec:complexity2D}
In this subsection, we deduce estimates for the complexity of our algorithm applied to a fixed interval $[\alpha, \beta)$. This actually requires several things:
\begin{itemize}
\item An evaluation of the complexity of the basic blocks, namely Algorithms~\ref{algo:build_two_var} and~\ref{algo:2variables}, done in Section~\ref{sssec:basicplx}. 
\item Use Corollary~\ref{cor:1/(b-a)_2D} to evaluate the size of a subinterval   $[a_1, b_1]$ which can be treated at once by those algorithms. This is the subject of Subsection~\ref{sssec:numint}, building on Corollary~\ref{cor:1/(b-a)_2D}.
\end{itemize}

\subsubsection{Complexity of Algorithms~\ref{algo:build_two_var} and~\ref{algo:2variables}}\label{sssec:basicplx}

\begin{proposition}\label{prop:cplxalgo1}
  On input $a_1, b_1, a_2, b_2, f, d, N_1, N_2, u, v, \rho_1, \rho_2$, under the assumptions $N_1N_2 = O(d^2)$ and $d^2 = O({\mathfrak p})$, 
  assuming that evaluating $f$ in precision ${\mathfrak p}$  costs $C_{f,{\mathfrak p}} = \tilde{O}({\mathfrak p})$, and a DCT of size $n$ in  precision $q$ has cost $O(n^2 \Mult(q))$,  Algorithm~\ref{algo:build_two_var} has complexity
  $\tilde{O}(d^6 {\mathfrak p}),$
  where ${\mathfrak p}$ is as in Lemma~\ref{lem:prec_comp}. 
\end{proposition}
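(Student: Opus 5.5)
The proof is a line-by-line cost count of Algorithm~\ref{algo:build_two_var}, carried out throughout at the working precision ${\mathfrak p}$ of Lemma~\ref{lem:prec_comp}. I take that lemma to guarantee that computing in precision ${\mathfrak p}$ (roughly $\Prec$ plus the bit-size of the largest entries $(uB_x)^k(v(B_f+B_t))^\ell$) is enough to obtain the rounded integer entries of $M_c$ and $M_r$ correctly. Since all quantities are of size $2^{O({\mathfrak p})}$, each arithmetic operation costs $\Mult({\mathfrak p}) = \tilde{O}({\mathfrak p})$. The work splits into three parts: the preprocessing (Steps~\ref{step:prec2D}--\ref{step:B_f2D}), the $N$ iterations of the double loop (Steps~\ref{step:for_ell2D}--\ref{step:end_for_ell2D}), and the final rounding.

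\emph{Preprocessing.} Computing $\Prec$, the $N_1+N_2$ Chebyshev nodes (cosines in precision ${\mathfrak p}$) and $B_x$, $B_t$ costs $\tilde{O}((N_1+N_2){\mathfrak p})$. The bound $B_f$ is the maximum of $|f|$ on an ellipse. I would take it to be computed as a certified upper bound, for instance by interval evaluation of $f$ on polynomially many (in $d$) sample arcs, with a precision that only needs to be $O({\mathfrak p})$. This costs $\mathrm{poly}(d)\,C_{f,{\mathfrak p}}$, which is dominated by the main loop. This is a modelling point rather than a difficulty, and it is the place where I would have to be most careful about what the paper counts as computing $B_f$.

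\emph{Main loop.} First, the values $f(x)$ at the $N_1$ abscissas $L_{cheb,x}$ are independent of $(k,\ell)$. Computing them once costs $N_1 C_{f,{\mathfrak p}} = \tilde{O}(d^2{\mathfrak p})$. Next, for each node pair I build all powers $(ux)^k$ and $(v(f(x)+t))^\ell$ for $k,\ell\le d$ incrementally. That is $O(d)$ multiplications per node, so $O(N_1N_2 d)$ operations in total. Each value $\varphi(x,t)$ at Step~\ref{step:varphi2D} is then a single product, which gives $O(N N_1N_2)$ operations overall.

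The dominant term is the DCT at Step~\ref{step:dct2D}. It is a transform of size $n = N_1N_2 = O(d^2)$, so by hypothesis it costs $O(n^2\Mult({\mathfrak p})) = \tilde{O}(d^4{\mathfrak p})$. It is executed $N = O(d^2)$ times, which totals $\tilde{O}(d^6{\mathfrak p})$. The halvings and the roundings $[2^{\Prec}\cdot]_0$ at Step~\ref{step:mc2D} touch $N N_1N_2 = O(d^4)$ entries at cost $\tilde{O}({\mathfrak p})$ each. Step~\ref{step:mr2D} costs $O(d)$ multiplications per entry, so $\tilde{O}(d^3{\mathfrak p})$ in total. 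Summing everything gives $\tilde{O}(d^6{\mathfrak p})$, using $d^2 = O({\mathfrak p})$ to absorb lower-order terms such as $\log{\mathfrak p}$ factors and setup costs.

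\emph{Main obstacle.} The expected hardest point is not the operation count but justifying that precision ${\mathfrak p}$ suffices at every stage, so that each elementary operation really costs only $\tilde{O}({\mathfrak p})$. Concretely, the entries of $U$ must be correct to within $2^{-\Prec}$ before rounding, even though the DCT sums $N_1N_2$ terms of magnitude up to about $(uB_x)^d (v(B_f+B_t))^d$. I plan to take this directly from Lemma~\ref{lem:prec_comp} as the definition of ${\mathfrak p}$, and only check that the extra $O(\log(N_1N_2))$ guard bits lost in the DCT summation are covered, which again follows from $d^2 = O({\mathfrak p})$.
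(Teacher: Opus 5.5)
Your proposal is correct and follows the paper's argument: the cost is dominated by the $N=O(d^2)$ two-dimensional DCTs of size $N_1N_2=O(d^2)$ in the main loop, each costing $O((N_1N_2)^2\Mult({\mathfrak p}))$, while the $O(d^2)$ evaluations of $f$ and the $O(d^2N_1N_2)$ multiplications are of lower order, giving $\tilde{O}(d^6{\mathfrak p})$ in total. The paper likewise takes the sufficiency of precision ${\mathfrak p}$ directly from Lemma~\ref{lem:prec_comp} and does not separately count the one-time computation of $B_f$ (it assumes elsewhere that an interval evaluation costs $O(1)$ evaluations of $f$), so your more detailed bookkeeping is consistent with its proof.
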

\begin{proof}
    The most costly steps of Algorithm~\ref{algo:build_two_var} appear in the loop \ref{step:for_ell2D}-\ref{step:end_for_ell2D} which can be performed using $O(d^2)$ evaluations of $f$ at precision $O({\mathfrak p})$, and $O(d^2 N_1 N_2)$ multiplications of real numbers in   precision $O({\mathfrak p})$, plus $O(d^2)$ DCTs of size $N_1 N_2$ in precision ${\mathfrak p}$, for a total cost $O(d^2 (N_1 N_2)^2  \Mult({\mathfrak p})  + d^2 C_{f, {\mathfrak p}})$.
\end{proof}

We now turn to the analysis of Algorithm~\ref{algo:2variables}. We
shall limit ourselves to the analysis of
Steps~\ref{step:call_algo3}--\ref{step:aux_poly2var}, which compute
the two auxiliary polynomials. This is, in any case, the core of the
algorithm, but also the choice made in previous papers, and thus
allows for a better comparison.
\begin{proposition}\label{prop:cplxalgo2}
  On input $a_1, b_1, a_2, b_2, f, d, N_1, N_2, u, v, \rho_1, \rho_2$, under the  assumptions 
  $C_{f,{\mathfrak p}} = \tO({\mathfrak p})$,  $N_1 N_2 = O(d^2)$, $d^2 = O({\mathfrak p})$, Steps~\ref{step:call_algo3}--\ref{step:aux_poly2var} of Algorithm~\ref{algo:2variables} have complexity
  $\tilde{O}(d^8{\mathfrak p}^2)$. 
\end{proposition}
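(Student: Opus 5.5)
The cost splits into three parts: Step~\ref{step:call_algo3}, the LLL reduction of Step~\ref{step:lll2D}, and the change of basis of Steps~\ref{step:changebasismat2D}--\ref{step:aux_poly2var}. By Proposition~\ref{prop:cplxalgo1}, Step~\ref{step:call_algo3} costs $\tilde{O}(d^6{\mathfrak p})$, which the final bound absorbs. The plan is to show that the LLL call dominates and costs $\tilde{O}(d^8{\mathfrak p}^2)$.

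For Step~\ref{step:lll2D}, the matrix $(M_c \augment M_r)$ has $N = O(d^2)$ rows and $N + N_1N_2 = O(d^2)$ columns, so rank and embedding dimension are both $O(d^2)$. Its entries are integers whose bit-length I would bound by $O({\mathfrak p})$. By Lemma~\ref{lem:prec2D}, $\Prec = O(\log N + N_1\log\rho_1 + N_2\log\rho_2)$. The entries of $M_c$ are $2^{\Prec}$ times Chebyshev coefficients, which are bounded via Proposition~\ref{prop:ineqcauchy2D} by $4\Mbiv(f_i)\eta\eta$. The entries of $M_r$ are $2^{\Prec}R_i$. Since $\log \Mbiv(f_i) \le d\log(uB_x) + d\log(v(B_f+B_t))$, all entries have bit-length $O(\Prec + d\log(uvB_xB_f))$. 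I would check that this is $O({\mathfrak p})$, which is presumably how ${\mathfrak p}$ is defined in Lemma~\ref{lem:prec_comp}.

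I would then apply a quasi-linear LLL variant such as $\tilde{L}^1$ or $L^2$, which is permitted since Theorem~\ref{thm:lll82} only asserts polynomial time. The $L^2$ algorithm has cost $O(n^4 m (n+\beta)\beta)$, where $n$ is the rank, $m$ the embedding dimension and $\beta$ the bit-size. With $n, m = O(d^2)$ and $\beta = O({\mathfrak p})$, this gives $O(d^{10}{\mathfrak p}(d^2+{\mathfrak p}))$, which exceeds the target. So the main obstacle is matching exactly $\tilde{O}(d^8{\mathfrak p}^2)$, i.e.\ $n^4\beta^2$ with no factor of $m$.

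To overcome it, I would invoke a sharper LLL bound of the form $\tilde{O}(n^{4}\beta^2)$, for instance the $L^2$ bound $O(n^{3}m\beta^2)$ once $n \le \beta$ (guaranteed by $d^2 = O({\mathfrak p})$), with $m = O(n)$. That yields $O(d^6\cdot d^2\cdot{\mathfrak p}^2) = O(d^8{\mathfrak p}^2)$. If the factor $m$ is troublesome, a first alternative is the Akhavi--Stehlé projection of Theorem~\ref{thm:AkhaviStehle}, which reduces an $N\times N$ matrix; a second is noting that the $O(d^2)$ columns of $M_r$ are diagonal.

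Finally, Step~\ref{step:changebasismat2D} divides the first two rows of $M_{LLL,r}$ by the diagonal entries of $M_r$, which costs $O(d^2)$ divisions of $O({\mathfrak p})$-bit numbers. By LLL size bounds the output entries stay polynomial in ${\mathfrak p}$ and $d$. Forming $P_0, P_1$ at Step~\ref{step:aux_poly2var} is linear in $N$. Both steps are negligible, so the total is $\tilde{O}(d^8{\mathfrak p}^2)$.
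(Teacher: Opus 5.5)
Your decomposition matches the paper's. Algorithm~\ref{algo:build_two_var} costs $\tilde{O}(d^6{\mathfrak p})$ by Proposition~\ref{prop:cplxalgo1}. The $L^2$ call, on a rank-$N$ lattice in dimension $N+N_1N_2$ with $O({\mathfrak p})$-bit entries, dominates, and the post-processing is negligible. Your bit-size check is fine too.

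The gap is in the one step that actually fixes the exponent. You quote the $L^2$ cost as $O(n^4 m(n+\beta)\beta)$ and correctly note that this gives $d^{10}{\mathfrak p}^2$. You then assert the sharper bound $O(n^3 m\beta^2)$ ``once $n\le\beta$''. But $n\le\beta$ only turns $(n+\beta)\beta$ into $O(\beta^2)$. Applied to the formula you quoted, it gives $O(n^4 m\beta^2)$, which is still $O(d^{10}{\mathfrak p}^2)$. Nothing in your argument removes the extra factor $n$.

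The missing ingredient is the arithmetic cost inside $L^2$. Nguyen and Stehlé's bound is $O(n^2 m(n+\beta)\beta\,\Mult(n))$, where $\Mult(n)$ is the cost of multiplying the $O(n)$-bit floating-point numbers holding the Gram--Schmidt data. Your formula is the special case $\Mult(n)=O(n^2)$, i.e. schoolbook multiplication. The paper's standing convention is $\Mult(n)=\tilde{O}(n)$, which gives $\tilde{O}(n^3 m(n+\beta)\beta)$. Concretely this is $O(d^4 N_1N_2\Mult(d^2)(d^2+{\mathfrak p}){\mathfrak p})$, which equals $\tilde{O}(d^8{\mathfrak p}^2)$ once you use $N_1N_2=O(d^2)$ and $d^2=O({\mathfrak p})$. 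That is exactly the paper's computation.

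Your fallbacks do not repair this:
\begin{itemize}
\item The obstacle is not the factor $m$, which equals $N+N_1N_2=\Theta(d^2)$ regardless.
\item The Akhavi--Stehl\'e projection still requires reducing an $N\times N$ matrix, so $m=\Theta(d^2)$ and nothing changes asymptotically. It would also replace the deterministic LLL call of Step~\ref{step:lll2D} with a randomized procedure carrying a weaker guarantee, so it is no longer the algorithm being analyzed.
\item The diagonal shape of $M_r$ plays no role in the $L^2$ bound.
\end{itemize}
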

\begin{proof}
  The main steps of Algorithm~\ref{algo:2variables} are:
  \begin{itemize}
  \item A call to Algorithm~\ref{algo:build_two_var}, the cost of which is $\tO(d^6{\mathfrak p})$ by Proposition~\ref{prop:cplxalgo1};
  \item A call to LLL on a lattice of dimension $N$ with entries of size
    $O({\mathfrak p})$. 
  \end{itemize}
  For the second part, we use the $L^2$ algorithm~\cite{NguyenS09} on a lattice
  of dimension $N = O(d^2)$, embedded into $\R^{N+N_1N_2}$; we thus have complexity $O(d^4 N_1 N_2 \Mult(d^2) (d^2 + {\mathfrak p}){\mathfrak p})$. 
   This cost dominates the cost of  Algorithm~\ref{algo:2variables}.
\end{proof}

\subsubsection{Number of subintervals for fixed $d$}\label{sssec:numint}

Thanks to the results of the previous subsection, given a value
$\gamma$, we can estimate the maximum size of an interval $[a_1, b_1]
\subset [\alpha, \beta]$, with $\alpha, \beta$ fixed, for which
Algorithm~\ref{algo:2variables} succeeds (in the sense of
Corollary~\ref{cor:1/(b-a)_2D}) and yields an upper bound of the
order of magnitude $w = O(|b_1-a_1|^{-\gamma})$. 

This follows from Proposition~\ref{thm:1/(b-a)_2D}, and yields at the same
time the number of subintervals to be considered if one wants to deal
with a full interval $[\alpha, \beta]$.

\begin{theorem}\label{thm:cplx2D}
  Given fixed $f$ and two fixed real numbers $\alpha, \beta$, Problem~\ref{probgen}  can heuristically be solved for $u, v \rightarrow \infty$, $d\rightarrow \infty$, $\gamma \in [3,N]$, and
  \[
    w = 2^{O({\sqrt{\gamma N}})}(uv)^{\frac{d}{3\psi(N/\gamma)}(1 +O(1/d))}
  \]
  over $[\alpha, \beta]$ using
  \[
    (\beta-\alpha) 2^{O\left ( \sqrt{N/\gamma} \right)} (uv)^{\frac{d}{3\psi(N/\gamma)\gamma}(1+O(1/d))}
    \]
  calls to Algorithm~\ref{algo:2variables} with parameter $d$.
\end{theorem}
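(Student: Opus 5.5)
The plan is to subdivide $[\alpha,\beta]$ into subintervals $[a_1,b_1]$ of a common length $h$, chosen so that Proposition~\ref{thm:1/(b-a)_2D} applies on each one. On each subinterval we take $[a_2,b_2]=[-1/w,1/w]$. Corollary~\ref{cor:1/(b-a)_2D} then shows that Algorithm~\ref{algo:2variables} never fails at Step~\ref{step:fail_small2D}. So, under the coprimality heuristic, its output contains every solution of Problem~\ref{probgen} lying in $[a_1,b_1]$. The union of these outputs solves the problem over $[\alpha,\beta]$, and the number of calls is $\lceil(\beta-\alpha)/h\rceil$.

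\textbf{Fixing the constants.} Since $f$ and $\alpha,\beta$ are fixed, I choose a fixed $K_1>0$ such that $f$ is analytic on a neighbourhood of every disc of radius $K_1/2$ centred in $[\alpha,\beta]$. For instance, take $K_1$ smaller than twice the distance from $[\alpha,\beta]$ to the nearest singularity, but small enough that $K_1\ge 2h$ holds once $h$ is small. Then $M_{\cDun}(f)$ is bounded uniformly in the subinterval, and so is $K'_1=K_1+|a_1+b_1|$.

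Next I fix a constant $K_2$ and define $\rho_2=K_2 w/2$, so that $b_2-a_2=2/w$. The quantity $K'_2=K_2$ is then $O(1)$. Hence the factor $\bigl(uvK'_1(M_{\cDun}(f)+K'_2)\bigr)$ in~\eqref{eq:1/(b-a)_2D} equals $uv\cdot O(1)$. The $O(1)$ factor raised to the power $\frac{d}{3\psi\gamma}(1+O(1/d))$ is $2^{O(d/(\psi\gamma))}$. Because $\psi(N/\gamma)\asymp\sqrt{N/\gamma}$ and $d\asymp\sqrt N$, this is $2^{O(\sqrt{N/\gamma}\cdot\sqrt{\gamma/\gamma})}$, hence absorbed in $2^{O(\sqrt{N/\gamma})}$.

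\textbf{Choosing $h$ and $w$.} I set
\[
h = K_1\, 2^{-C\sqrt{N/\gamma}}\,(uv)^{-\frac{d}{3\psi(N/\gamma)\gamma}(1+O(1/d))},
\]
with $C$ large enough that~\eqref{eq:1/(b-a)_2D} holds. This makes $\rho_1=K_1/h$. The relation $\gamma=\log\rho_2/\log\rho_1$ must then be enforced by choosing $w$ with $\rho_2=\rho_1^{\gamma}$, that is,
\[
w=\tfrac{2}{K_2}(K_1/h)^{\gamma}=2^{O(\sqrt{\gamma N})}(uv)^{\frac{d}{3\psi(N/\gamma)}(1+O(1/d))}.
\]
This is exactly the announced value of $w$. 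With this choice, $\rho_1^{N_1}\le\rho_2^{N_2}$ reduces to $N_1\le\gamma N_2$, which follows from the definitions of $N_1,N_2$ since $\sqrt{2\gamma N}\le\gamma\sqrt{2N/\gamma}$. The number of calls is $(\beta-\alpha)/h$, which gives the stated count; the rounding up only adds $1$, which is absorbed for $u,v\to\infty$.

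\textbf{Remaining input conditions.} It is still necessary to check the input requirement $64(\rho_1^{-N_1}+\rho_2^{-N_2})v\Mbiv(f(x)+t)<1$ of Algorithm~\ref{algo:2variables}. Since $\rho_1^{N_1}\approx(uv)^{N_1 d/(3\psi\gamma)}$ and $N_1\asymp\sqrt{\gamma N}$, the exponent is of order $d\sqrt{\gamma N}/(\psi\gamma)\asymp d$. This is much larger than $1$, so the condition holds as $u,v\to\infty$. The conditions $\rho_i\ge2$ also hold as $u,v\to\infty$, and $N_1N_2\ge N$ follows from the product $\lfloor\sqrt{2\gamma N}\rfloor\lceil\sqrt{2N/\gamma}\rceil\gtrsim 2N$.

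\textbf{Main obstacle.} The hardest step is the uniformity bookkeeping. All the $O(\cdot)$ terms in Proposition~\ref{thm:1/(b-a)_2D} must be independent of the subinterval and of $u,v$, depending only on $f,\alpha,\beta,K_1,K_2$. Also, the error $2^{O(\sqrt{N/\gamma})}$ produced by the fixed constants must not be worse than the error term claimed for $w$; the exponent $\gamma$ turns it into $2^{O(\sqrt{\gamma N})}$, which is what the statement allows. I would verify this by tracking the $O(1)$ terms through Corollary~\ref{cor:bnd_with_alpha2D}.
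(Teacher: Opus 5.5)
Your proposal is correct and follows essentially the paper's route: subdivide $[\alpha,\beta]$, apply Corollary~\ref{cor:1/(b-a)_2D} on each subinterval with $\rho_2=\rho_1^{\gamma}$, and read off $w$ from $1/w \asymp b_2-a_2$, which gives $w = O((b_1-a_1)^{-\gamma})$. Your extra checks are sound: the input condition of Algorithm~\ref{algo:2variables}, $\rho_1^{N_1}\le\rho_2^{N_2}$, $N_1N_2\ge N$, and the choice of a fixed $K_1$ tied to the singularities of $f$ (the paper literally writes $K_1=2(b_1-a_1)$, which would force $\rho_1=2$). One small correction: the constant factor you absorb is in fact $2^{O(d/(\psi\gamma))}=2^{O(1/\sqrt{\gamma})}$, not the expression with $\sqrt{\gamma/\gamma}$ you wrote, and it is harmless.
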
 
\begin{proof}
  This is a direct consequence of Corollary~\ref{cor:1/(b-a)_2D}, where
  we note that $a_1, a_2, b_1, b_2$ are bounded, and we choose $K_1 = 2(b_1-a_1)$,   $K_2 = 1$  and $\rho_2 = \rho_1^{\gamma}$.

  The heuristic nature of this result comes from the possibility that   the two polynomials obtained in Algorithm~\ref{algo:2variables} are   not coprime, in which case one cannot recover the solutions $X, Y$   from those two polynomials.

  Finally, we can take $1/w = b_2 - a_2$, thus the upper bound on $w$ is $O((b_1 - a_1)^{-\gamma})$,   from which the second part of the result follows.
\end{proof}
The careful reader might have noticed powers of two appearing here
compared to the simplified version of the introduction. These powers
of two are related to the approximation factor of the LLL algorithm. In most
applications, these terms are asymptotically negligible, for instance
as soon as $d = o(\log(uv))$.

At a very high level, we notice that large $\gamma$ means that we only obtain information on points very close to the curve $y = f(x)$ but that we can handle large intervals at once. In an opposite direction, small $\gamma$ yields information in a wider region around the curve, but requires more computations. 

In terms of asymptotic regimes, 
\begin{itemize}
    \item 
if $\gamma$ tends to infinity as $N/\kappa$, $\kappa \ge 1$, we deduce as an upper bound for the number of intervals
  $O\left((\beta-\alpha) (uv)^{\frac{2\kappa}{3d\psi(\kappa)}(1 + O(1/d))}\right)$,
    with $w = (uv)^{\frac{d}{3\psi(\kappa)}(1 + O(1/d))}$. For $\kappa = 1$, this is essentially Bombieri-Pila's estimate~\cite{BombieriPila1989}.
\item if, on the other hand, $\gamma = o(N)$, $N/\gamma$ tends to infinity and we can use the asymptotic estimate (see Corollary~\ref{cor:estim_sumKs}) $\psi(N/\gamma) = \frac{2}{3} \sqrt{\frac{2N}{\gamma}} + O(1)$ to get a bound on the number $n_I$ of intervals and on $w$ of the respective forms
    \begin{align*}
    n_I & = (\beta-\alpha) 2^{O(d/\sqrt{\gamma})} (uv)^{1/(2\sqrt{\gamma})(1 + O(\sqrt{\gamma}/d))},\\
    w & = 2^{O(d\sqrt{\gamma})} (uv)^{\sqrt{\gamma}/2 (1 + O(\sqrt{\gamma}/d))}.
    \end{align*}
\end{itemize}
For $u = v = 2^p$, $\gamma = 4 + o(1)$, $d = o(p)$, we recover Stehlé's result~\cite{Stehle2006}, namely the fact that we can solve the TMD (i.e., get the bound $1/w = 2^{-2p}$) in time $2^{p/2(1+o(1))}$.

\subsubsection{Taking into account the growth of $f$}
The previous analysis ignores the dependency in $f$ by choosing $\rho_1(b_1-a_1)$ as a constant and considering, then, the term $M_{\cDun} (f)$ as a constant.

In this subsection, we shall slightly improve on this analysis if $f$
is entire and the growth of $f$ at infinity is moderate, by letting
$\rho_1(b_1 - a_1)$ tend to infinity with $d$.

We start by reformulating Proposition~\ref{thm:1/(b-a)_2D} in the
present context; the following statement clearly shows that the region
of validity of the method is controlled by the relationship between
$K_1$ and $M_{\cDun}(f)$, namely the growth of $f$ at $\infty$.

In the sequel, if $H:\R_{>0} \rightarrow \R_{>0}$ we denote
by $\omega(H(d))$ any function $\tilde{H}: \R_{>0} \rightarrow \R_{>0}$ such that
$\tilde{H}(d)/H(d) \rightarrow_{d\rightarrow +\infty} +\infty$.

\begin{proposition}
Under the assumptions of Proposition~\ref{thm:1/(b-a)_2D}, assume that $a_1 < b_1, a_2 < b_2$ are fixed, $\gamma = N/\lambda$ for some
  constant $\lambda \ge 1$, $\omega(1) \le K_1 = 2^{O(d)}$, $K_2 = O(1)$ and that $f$ is not a constant.
  Then, a sufficient condition for the method to succeed over $[a_1, b_1]$
  when $d \rightarrow \infty$ is  
  \begin{equation}\label{eq:asymp_cond}
    K_1 (M_{\cDun}(f)uv)^{-\frac{2\lambda}{3d\psi(\lambda)}} = 2^{-\omega(1)}.
  \end{equation}
\end{proposition}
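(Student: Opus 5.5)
The plan is to specialize Proposition~\ref{thm:1/(b-a)_2D} to the regime $\gamma = N/\lambda$ and to rewrite its sufficient condition \eqref{eq:1/(b-a)_2D} so that only the combination $K_1 (M_{\cDun}(f)uv)^{-2\lambda/(3d\psi(\lambda))}$ remains, up to factors $2^{O(1)}$. Corollary~\ref{cor:1/(b-a)_2D} then converts that inequality into success of the method over $[a_1,b_1]$. The first step is to simplify each ingredient of \eqref{eq:1/(b-a)_2D} under the hypotheses.

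\begin{itemize}
\item With $\gamma = N/\lambda$, the factor $2^{O(-\sqrt{N/\gamma})}$ equals $2^{O(-\sqrt{\lambda})} = 2^{-O(1)}$, since $\lambda$ is fixed.
\item The exponent is $\frac{d}{3\psi(N/\gamma)\gamma} = \frac{d\lambda}{3\psi(\lambda)N}$. Since $N = (d+1)(d+2)/2$, this equals $\frac{2\lambda}{3d\psi(\lambda)}(1+O(1/d))$.
\item Since $a_1,b_1$ are fixed and $K_1 = \omega(1)$, we have $K'_1 = K_1 + |a_1+b_1| = K_1(1+o(1))$.
\item Since $K_2 = O(1)$, $K'_2 = O(1)$.
\item Since $f$ is not constant and $\cDun$ grows with $K_1$, Liouville's theorem (or the maximum principle) gives $M_{\cDun}(f) \ge c > 0$, bounded below uniformly. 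Hence $M_{\cDun}(f) + K'_2 = O(M_{\cDun}(f))$.
\end{itemize}

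The second step handles the parasitic factors. The factor $K_1^{-\frac{2\lambda}{3d\psi(\lambda)}(1+O(1/d))}$ is $2^{O(1)}$, because $K_1 = 2^{O(d)}$. The factor coming from the $O(1/d)$ correction on $(M_{\cDun}(f)uv)$ must also be shown to be $2^{o(1)}$, or at least absorbed into the $2^{\omega(1)}$ slack. Here I would use that the $O(1/d)$ in Proposition~\ref{thm:1/(b-a)_2D} is in fact $O(1/N)$ relative to $d$. I would verify this from the proof of Corollary~\ref{cor:bnd_with_alpha2D}, where the exponents are $d/3 + O(1)$. After this, \eqref{eq:1/(b-a)_2D} reduces to the single inequality
\[
b_1 - a_1 < 2^{-O(1)}\,\Phi, \qquad \Phi := K_1\,(M_{\cDun}(f)uv)^{-\frac{2\lambda}{3d\psi(\lambda)}},
\]
up to a factor $2^{O(1)}$, with $b_1-a_1$ a fixed positive constant.

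The third step compares this reduced inequality with the hypothesis $\Phi = 2^{-\omega(1)}$ of the statement and concludes via Corollary~\ref{cor:1/(b-a)_2D}. I expect this comparison to be the main obstacle, for two reasons.

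\begin{itemize}
\item The orientation of the inequalities has to be tracked carefully through $\rho_1 = K_1/(b_1-a_1)$. The proof of Proposition~\ref{thm:1/(b-a)_2D} is naturally phrased as a lower bound on $\rho_1$, and the statement is phrased in terms of $K_1$.
\item $M_{\cDun}(f)$ itself depends on $K_1$, so it must be treated as a function of $K_1$ and not as a constant.
\end{itemize}

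In the final write-up I would restate the ``precondition'' $64(\rho_1^{-N_1}+\rho_2^{-N_2})v\Mbiv(f(x)+t)<1$ of Algorithm~\ref{algo:2variables} in the same notation. I would then check that the hypothesis on $\Phi$, together with the simplifications above, yields exactly the reduced inequality with $2^{\omega(1)}$ room to spare. That spare room absorbs all the $2^{O(1)}$ constants, and this is why the statement is phrased with $\omega(1)$.
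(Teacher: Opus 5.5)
Your first two steps are the same as the paper's: specialize Proposition~\ref{thm:1/(b-a)_2D} at $\gamma=N/\lambda$, use $K'_1=K_1(1+o(1))$, $K'_2=O(1)$ and $M_{\cDun}(f)+K'_2=O(M_{\cDun}(f))$, and absorb $K_1^{-O(1/d)}$ via $K_1=2^{O(d)}$. \textbf{The third step, which you flag as the obstacle and then only assert, is false.} By your own reduction, success is guaranteed when $b_1-a_1<2^{-O(1)}\Phi$ (times the stray factor), with $b_1-a_1>0$ \emph{fixed}. Under the hypothesis $\Phi=2^{-\omega(1)}$ the right-hand side tends to $0$, so this inequality fails for every large $d$. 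The $2^{\omega(1)}$ of slack is on the wrong side. No care with orientations through $\rho_1=K_1/(b_1-a_1)$ can change this, because success needs $\rho_1$ large, i.e.\ $\Phi$ large.

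The printed statement is in fact false. Take $v=2^{d^4}$: then $\Phi=2^{-\omega(1)}$. But $\rho_1^{N_1}=2^{O(d^3)}$ and $\Mbiv(f(x)+t)\to\infty$ (by Liouville, since $K_1\to\infty$). So the input condition $64(\rho_1^{-N_1}+\rho_2^{-N_2})v\Mbiv(f(x)+t)<1$ of Algorithm~\ref{algo:2variables} is violated, and Remark~\ref{rem:reste_f2D} shows that condition is necessary for success. The paper's own proof makes the same reversal. It shows the right-hand side of \eqref{eq:1/(b-a)_2D} tends to $0$, then says the method succeeds once it is below $|b_1-a_1|$, which is the opposite of \eqref{eq:1/(b-a)_2D}. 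So following it will not rescue your step.

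The true version has hypothesis $\Phi=2^{+\omega(1)}$. It is also the one Theorem~\ref{thm:ordre_fini} actually uses: it rewrites \eqref{eq:asymp_cond} as $d\log d=\frac{2\theta'\lambda}{3\psi(\lambda)}\log(uv)+\omega(d)$, which, with $\log M_{\cDun}(f)=O(d)$ there, says exactly $\Phi\to+\infty$. With that sign your reduction finishes the proof: the right-hand side of \eqref{eq:1/(b-a)_2D} tends to $+\infty$ and eventually exceeds the fixed $b_1-a_1$.

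The corrected hypothesis also settles the point you left open about the $(1+O(1/d))$ in the exponent. That correction perturbs the exponent of $M_{\cDun}(f)uv$ by $O(1/d^2)$, so the stray factor is $2^{O(1)}$ provided $\log(M_{\cDun}(f)uv)=O(d^2)$. This bound does not come from Corollary~\ref{cor:bnd_with_alpha2D}. It comes from the hypothesis itself: $\Phi\ge 1$ and $K_1=2^{O(d)}$ give $\log(M_{\cDun}(f)uv)\le\frac{3d\psi(\lambda)}{2\lambda}\log K_1=O(d^2)$.
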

\begin{proof}
  Under our assumptions, we have $K'_1 = K_1(1 + o(1))$, $M_{\cDun}(f) = \omega(1)$, and $K'_2 = K_2 + O(1) = O(1)$. We have $\rho_1 = K_1/(b_1 - a_1) \ge 2$ for $d$ large enough, $\rho_2 = \rho_1^\gamma \ge 2$ for $d$ large enough,
so that
  the right hand side of~(\ref{eq:1/(b-a)_2D}) is
  \begin{multline*}
  K_1 2^{O(1)}
  \left( uv \cdot K_1(1 + o(1))\left(M_{\cDun}(f)(1 + o(1))\right)\right)^{-d/3\gamma\psi(\lambda) (1 + o(1))}\\ = K_1^{O(1/d)} 2^{O(1)} 2^{-\omega(1)}.
  \end{multline*}
  
  As we have assumed $K_1 = 2^{O(d)}$, the first term can be merged
  into the second one. Overall, this quantity tends to $0$ as
  $d\rightarrow \infty$.
  
  The method succeeds as soon as this upper bound is less than $|b_1 - a_1|$; 
  this is guaranteed to happen for $d$ large enough. 
\end{proof}

Depending on the growth of $f$ at infinity it is now
quite easy to produce a condition on $|b_2-a_2|/|b_1-a_1|^\gamma$,
$u$, $v$ under which the method works: one just has to find an
optimal choice for $K_1$.

The conditions on $K_1$ and $K_2$ are chosen to obtain a statement which is simple and works for practical functions; it is sufficient  unless the function tends really very slowly to infinity. Proposition~\ref{thm:1/(b-a)_2D} still allows us to handle the general case. 

We say that an entire function $f : \comp \rightarrow \comp$ has finite order $\le \theta$ if 
\[
\limsup_{\rho\rightarrow +\infty} \frac{\log \log \max_{|z|=\rho} |f(z)|}{\log |z|} \le \theta.
\]

\begin{theorem}\label{thm:ordre_fini}
  Let $f$ be an entire function of finite order $\le \theta$, and
  $\nu > \frac{2\lambda \theta}{3\psi(\lambda)}$ be a real number; then
  the method succeeds over $[a_1, b_1]$ for $uv \rightarrow \infty$ as soon as
  \[
  d = \nu \frac{\log(uv)}{\log \log (uv)} (1 + o(1)), 
  \]
  and
  \[
  |b_2 - a_2| = (uv)^{-\frac{\nu^2}{2\theta \lambda} \frac{\log(uv)}{\log \log (uv)}(1 + o(1))}.
  \]
\end{theorem}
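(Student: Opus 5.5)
The plan is to apply the preceding proposition, i.e.\ condition~\eqref{eq:asymp_cond}, with $\gamma = N/\lambda$ and $K_2 = O(1)$. Then $\rho_2 = \rho_1^{\gamma}$, which gives $|b_2-a_2| = K_2\rho_1^{-\gamma} = (b_1-a_1)^{N/\lambda}K_1^{-N/\lambda}\cdot O(1)^{N/\lambda}$. Everything then comes down to choosing $K_1$ as a function of $uv$ and tracking the resulting value of $|b_2-a_2|$.

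\emph{Step 1: the growth bound.} Since $f$ has finite order $\le\theta$, for every $\varepsilon>0$ and $K_1$ large we have $\log M_{\cDun}(f) \le K_1^{\theta+\varepsilon}$. The disc is centred at the fixed point $(a_1+b_1)/2$, so this only shifts the radius by $O(1)$. Taking logarithms, \eqref{eq:asymp_cond} holds provided
\[
\log K_1 - \frac{2\lambda}{3d\psi(\lambda)}\bigl(K_1^{\theta+\varepsilon} + \log(uv)\bigr) \to -\infty .
\]

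\emph{Step 2: the choice of $K_1$.} I would take $K_1$ so that $K_1^{\theta} \approx \log(uv)$ up to a $(1+o(1))$ factor, i.e.\ $\log K_1 = \frac{1}{\theta}\log\log(uv)(1+o(1))$. This balances the two terms: the $f$-growth term is then of the same order as $\log(uv)$. With $d = \nu\log(uv)/\log\log(uv)$, the subtracted term is about $\frac{2\lambda}{3\psi(\lambda)\nu}\log\log(uv)\cdot c$, with $c$ of order $1$ coming from the balance. The strict inequality $\nu > 2\lambda\theta/(3\psi(\lambda))$ is exactly what makes $\frac{1}{\theta}\log\log(uv)$ smaller than this term, by a factor bounded away from $1$. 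Hence the difference tends to $-\infty$. To absorb the $\varepsilon$ and the constant $c$, I would pick $K_1^{\theta+\varepsilon}=o(\log(uv))$ slightly, e.g.\ $K_1 = (\log uv)^{1/\theta}/\log\log\log(uv)$, since only $\log K_1$ matters to first order. I also need to check the side hypotheses $\omega(1)\le K_1 = 2^{O(d)}$: here $\log K_1 = O(\log\log uv) = o(d)$, so they hold.

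\emph{Step 3: computing $|b_2-a_2|$.} We have $\log(1/|b_2-a_2|) = \frac{N}{\lambda}\log K_1\,(1+o(1))$, because $a_1,b_1$ are fixed and the $O(1)^{N/\lambda}$ factor is negligible against $\log K_1\to\infty$. Substituting $N = \frac{d^2}{2}(1+o(1))$, $d=\nu\log(uv)/\log\log(uv)$ and $\log K_1 = \frac{1}{\theta}\log\log(uv)(1+o(1))$ gives
\[
\log(1/|b_2-a_2|) = \frac{\nu^2}{2\theta\lambda}\,\frac{\log(uv)^2}{\log\log(uv)}(1+o(1)),
\]
which is the claimed exponent.

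The main obstacle I expect is Step 2: making the constants honest. The success condition must hold with the $(1+o(1))$ slack, and it requires matching the leading constant exactly against the threshold on $\nu$. This includes verifying that the $O(1/d)$ and $2^{O(\sqrt{N/\gamma})}$ error terms from Proposition~\ref{thm:1/(b-a)_2D} are absorbed into $2^{-\omega(1)}$; since $\lambda$ is constant, these terms are $2^{O(1)}$, so they are.
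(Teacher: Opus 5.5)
Your route is the paper's: specialize the asymptotic success condition with $\gamma=N/\lambda$ and $K_2=O(1)$, take $K_1$ a small power of $\log(uv)$, and read off $|b_2-a_2|\asymp K_1^{-\gamma}$. Your Step~3 arithmetic is right, but Step~2 runs the key inequality backwards. \eqref{eq:1/(b-a)_2D} asks that its right-hand side exceed the fixed quantity $b_1-a_1$, so the condition must make that right-hand side large. What has to be shown is
\[
\log K_1-\frac{2\lambda}{3\psi(\lambda)d}\left(\log M_{\cDun}(f)+\log(uv)\right)\longrightarrow+\infty .
\]
The $2^{-\omega(1)}$ printed in \eqref{eq:asymp_cond} is a sign slip. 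The paper's own proof of this theorem uses the $+$ direction, namely $d\log d=\frac{2\theta'\lambda}{3\psi(\lambda)}\log(uv)+\omega(d)$. Read literally as $-\infty$, your Step~1 ``provided'' is not an implication: replacing $\log M_{\cDun}(f)$ by the upper bound $K_1^{\theta+\varepsilon}$ lowers the expression. The assertion in Step~2 is also false. The hypothesis $\nu>2\lambda\theta/(3\psi(\lambda))$ says $\frac{2\lambda}{3\psi(\lambda)\nu}<\frac1\theta$. So once your adjustment makes the subtracted term $\frac{2\lambda}{3\psi(\lambda)\nu}\log\log(uv)(1+o(1))$, it is \emph{smaller} than $\log K_1\sim\frac1\theta\log\log(uv)$, and the difference tends to $+\infty$. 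That is exactly the correct requirement, so your estimates do establish success. As written, however, the conclusion is drawn from a reversed inequality. The argument must be stated in the $+\infty$ direction, where both the upper bound on $M_{\cDun}(f)$ and the lower bound on $\nu$ are used the right way round.

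Second, your concrete choice $K_1=(\log uv)^{1/\theta}/\log\log\log(uv)$ does not give $K_1^{\theta+\varepsilon}=o(\log(uv))$; it gives $(\log uv)^{1+\varepsilon/\theta+o(1)}$. Finite order $\le\theta$ only yields $\log M(r)\le r^{\theta+\varepsilon}$, not $O(r^{\theta})$. Take a function of infinite type, e.g.\ $1/\Gamma$ with $\theta=1$ and $\log M(r)\sim r\log r$. With your $K_1$, the term $\frac{2\lambda}{3\psi(\lambda)d}\log M_{\cDun}(f)$ is of order $(\log\log uv)^2/\log\log\log(uv)$, which swamps $\log K_1$, and the condition fails. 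You do not actually need $\log K_1\sim\frac1\theta\log\log(uv)$. The paper fixes $\theta'$ with $\theta<\theta'<\frac{3\psi(\lambda)\nu}{2\lambda}$, which is possible precisely because $\nu$ exceeds the threshold, and takes $K_1=d^{1/\theta'}$. Then $\log M_{\cDun}(f)=o(d)$, and $\frac{1}{\theta'}\log d-\frac{2\lambda}{3\psi(\lambda)\nu}\log\log(uv)\to+\infty$, so the method succeeds on a strip of width $\asymp d^{-\gamma/\theta'}$. The smaller width $d^{-\gamma/\theta}=(uv)^{-\frac{\nu^2}{2\theta\lambda}\frac{\log(uv)}{\log\log(uv)}(1+o(1))}$ claimed in the statement is then covered a fortiori.
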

\begin{proof}
  In the first case, we take $\theta' > \theta$ to be chosen later on,
  and $K_1 = d^{1/\theta'}$; then $M_{\cDun}(f) = 2^{O(d)}$ and 
  the condition of (\ref{eq:asymp_cond})  rewrites as 
  \[
   d \log d = \frac{2\theta'\lambda}{3\psi(\lambda)} \log (uv) + \omega(d). 
   \]
   Let $\nu> \frac{2\lambda \theta}{3\psi(\lambda)}$; we can always choose $\theta'$ so that $\nu > \frac{2\lambda \theta'}{3\psi(\lambda)}$. Then,
   the previous condition is realized for $d = \nu \frac{\log(uv)}{\log \log (uv)} (1 + o(1))$, for $d$ large enough.
   Finally, the first condition reads $|b_2 - a_2| = O(d^{-\gamma/\theta'})$, for
   which it suffices to have $|b_2 - a_2| = O(d^{-\gamma/\theta})$; the result
   then follows from $\gamma = d^2/2\lambda(1+o(1))$. 
\end{proof}

As a consequence for Problem~\ref{probdir}, we can (heuristically) check in polynomial time that $\mu(p) = O(p^2/\log p)$ for a function such as $\exp$. This improves on~\cite{Stehle2006} by a factor $\log p$.

Similar results can be obtained for various types of behavior of $f$
at infinity; overall, slower growth leads to improved results. 

\section{Prereduction} \label{sec:prered}

Most of the execution time of our algorithm consists, in practice, in
calls to the LLL algorithm on lattices built from values of the
function on consecutive intervals. In this section, we explain how to
significantly reduce  those costs.

Let us consider two consecutive intervals, $[a_1, b_1]$ and $[b_1, c_1
  = 2b_1-a_1]$. Roughly speaking, the purpose of the LLL step
of~\Cref{algo:2variables} is to compute a unimodular matrix $U$ such
that $UM$ is ``small''.  The output of this algorithm is actually more
than one or two basis vectors; it is a full matrix $U$ such that $UM$
has ``small'' coefficients.

As the matrices $M$ and $M'$ corresponding to two consecutive intervals are built using values of $f$ (and its derivatives in the case of Stehlé's approach) over those intervals, $M$ and $M'$ are close to one another. As $UM$ is small, we might thus expect $UM'$ to be small too; if this holds, the cost of LLL-reduction applied to $UM'$ will be much lower than LLL-reduction applied to $M'$.
In this section, we give theoretical and experimental arguments that validate this intuition: our experiments (see~Table~\ref{tab:prered_exper}) show that the prereduction trick works very well for our algorithm and we outline in~\Cref{subsec:anatrick} a heuristic justification of that fact.

\def\row{\textrm{row}}

\subsection{Analysis of the prereduction trick}\label{subsec:anatrick}
The discussion that follows is somewhat technical. For the sake of
simplicity, up to a linear transform of each coordinate, we shall now
restrict ourselves to the case $[a_1, b_1] = [a_2, b_2] = [-1, 1]$ so
that prereduction applies to $[1, 3] \times [-1, 1]$ the matrix found
for the product of intervals $[-1, 1]\times[-1, 1]$. We shall also remove the
coefficient $2^{\Prec}$ which should appear in all estimates. Finally,
we modify the construction of $M_r$ so that the remainder matrix is
the same on both intervals -- by taking for each coefficient the
largest value among the two intervals. We finally make the mild
assumption $\rho_1 \ge 6$.

The following lemma explains the algebraic meaning of the prereduction idea, by mathematically formulating what the matrix $UM'$ represents. It is a mere reformulation of the definitions.
\begin{lemma}\label{le:prered}
Let $M_c$ (resp $M'_c$) $\in \mathcal{M}_{N, N_1N_2}(\Z)$ be the matrix computed by
Algorithm~\ref{algo:build_two_var} on input $a_1, b_1, a_2, b_2$ 
(resp. $b_1$, $2b_1 - a_1$, $a_2$ , $b_2$), with $a_1 < b_1, a_2 < b_2$, 
  and let $w = (w_{k\ell})_{0 \le k+\ell \le d}$ be a vector in $\R^{N}$.

Then, $w\cdot M_c$ (resp. $w\cdot M'_c$) is a vector containing the
coefficients (in the Chebyshev basis) of the Chebyshev interpolant of
order $(N_1-1, N_2-1)$ over $[a_1, b_1] \times [a_2, b_2]$ (resp. $[b_1,
  2b_1 - a_1] \times [a_2, b_2]$) of the function $W(ux, vf(x)+y)$,
where $W = \sum_{0\le k+\ell \le d} w_{k\ell} x^k y^\ell$ is the polynomial
whose coefficients are the coordinates of $w$.

The same holds for $M_r, M'_r$ by replacing coefficients by
remainders.
\end{lemma}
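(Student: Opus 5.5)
The argument is linearity of Chebyshev interpolation combined with the definitions in Algorithm~\ref{algo:build_two_var}. I treat the unrounded matrices, i.e.\ before the scaling by $2^{\Prec}$ and the rounding $[\cdot]_0$, as the discussion preceding the lemma stipulates. In that form, row $i$ of $M_c$, indexed by $(k,\ell)$ in the order of Steps~\ref{step:for_ell2D}--\ref{step:varphi2D}, is built from the function $\varphi_{k\ell}(x,t) = (ux)^k (v(f(x)+t))^\ell$. The entry of row $i$ in column $k_2 + k_1N_2$ is, by \eqref{eq:coeffs-interpol2D} and Step~\ref{step:dct2D}, the coefficient $c_{k_1,k_2,i}$ of $T_{k_1,[a_1,b_1]}T_{k_2,[a_2,b_2]}$ in the interpolant $Q_i$ of $\varphi_{k\ell}$ at the pairs of Chebyshev nodes on $[a_1,b_1]\times[a_2,b_2]$. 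This coefficient already carries the halving required by the $\sumprime$ convention, so that $Q_i = \sum_{k_1,k_2} M_c[i,k_2+k_1N_2]\, T_{k_1,[a_1,b_1]}T_{k_2,[a_2,b_2]}$ with a plain sum. I would state this identification first. The same applies verbatim to $M'_c$, whose nodes are those of $[b_1,2b_1-a_1]\times[a_2,b_2]$.

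Next I use linearity. The interpolation operator $F \mapsto$ (interpolant at fixed nodes) is linear, since it is determined by the values at the nodes and the 2D-DCT-II is linear. Hence the interpolant of $\sum_i w_i\varphi_i$ is $\sum_i w_i Q_i$, and its Chebyshev coefficient vector is $\sum_i w_i\, M_c[i,\cdot] = w\cdot M_c$. On the other hand, $\sum_{0\le k+\ell\le d} w_{k\ell}(ux)^k(v(f(x)+t))^\ell = W(ux, v(f(x)+t))$, which gives the claim for $M_c$ and likewise for $M'_c$. I read the statement's ``$vf(x)+y$'' as $v(f(x)+t)$ up to the affine change of variable in $t$; this is only a notational matter and I would reconcile it explicitly.

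For the remainders, $M_r$ is diagonal with entries $R_i$, which bound $\|\varphi_i - Q_i\|_\infty$ by Proposition~\ref{prop:ineqcauchy2D}. Then $w\cdot M_r = (w_iR_i)_i$, and by the triangle inequality its $\ell^1$ norm bounds the interpolation error of $W(ux,v(f(x)+t))$, exactly as in the proof of Theorem~\ref{thm:alternants_interpolants2D}. Under the convention that $M_r = M'_r$ takes the larger value on the two intervals, this bound holds simultaneously on both intervals.

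The only real obstacle is bookkeeping: matching the ordering and the reversed node indexing $\mu_{N_1-1-\ell_1,N_2-1-\ell_2}$ with $L_{cheb,x}, L_{cheb,t}$, and checking that the column halvings implement the $\sumprime$ convention. Once these match, the proof is immediate.
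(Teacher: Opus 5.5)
Your proposal is correct and follows the paper, which only remarks that the lemma is a reformulation of the definitions. Row $i$ of the unscaled, unrounded $M_c$ is the plain-sum Chebyshev coefficient vector $c_{k_1,k_2,i}/2^{\delta_{0k_1}+\delta_{0k_2}}$ of the interpolant of $f_i$, linearity of interpolation gives the claim, and the remainder part is read as the $\ell^1$ bound $\sum_i |w_i| R_i$, as in the proof of Theorem~\ref{thm:alternants_interpolants2D}. Reading $vf(x)+y$ as $v(f(x)+y)$ is simply the intended meaning (the paper writes $W(ux, v(f(x)+y))$ a few lines later), so no change of variable is needed. Your explicit restriction to unrounded matrices is a point the paper glosses over.
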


Let $w \in \R^N$ be a vector such that $\|w \cdot M_c\|_1 < 1$. In
case of success of Algorithm~\ref{algo:2variables}, there are at least two such
$w$ among the rows of the matrix output by the LLL call of
Step~\ref{step:lll2D}. To explain why prereduction works, we need to argue that
whenever $w\cdot M_c$ is small, $w \cdot M'_c$ is much smaller than
$M'_c$. As we have forced $M_r = M'_r$, we need not take this part
into account.

We notice that the coefficients of the Chebyshev interpolant of order
$(N_1-1, N_2-1)$ of a function $F$ over $[a_1, b_1] \times [a_2, b_2]$
are obtained from values of $F$ over $[a_1, b_1] \times [a_2, b_2]$ by
a DCT, which is an orthogonal linear transform. In view of this and
Lemma~\ref{le:prered}, in order to bound $\|w \cdot M'_c\|_2$ it
suffices to bound the values of the function $W(ux, vf(x)+y)$ over
$[1, 3]\times [-1, 1]$.

Let $Q_{N_1-1, N_2-1}(x,y)$ be the Chebyshev interpolant of orders $N_1-1, N_2-1$ of $W(ux, v(f(x)+y))$ over $[-1,1] \times [-1, 1]$. We then have, for any $x \ge 1$ such that $x + \sqrt{x^2-1} < \rho_1$, $y\in [-1, 1]$, thanks to Lemma~\ref{le:cheb_extrapol}:
\begin{align*}
\left|W(ux,v(f(x)+y)) - Q_{N_1 - 1, N_2 - 1}(x, y)
\right| & \le \\
128 \, \Mbiv (W(ux, v(f(x)+y)) & \left(
\left(\frac{3 + 2\sqrt{2}}{\rho_1}\right)^{N_1-1} +
\frac{1}{\rho_2^{N_2-1}} \right). 
\end{align*}
In particular, this holds for $(x,y)\in [1, 3]\times [-1, 1],$ as $\rho_1 \ge 6$.

Now, success over $[-1,1]\times [-1,1]$ implies that
\[
64\, \Mbiv (W(ux, vf(x)+y)) \left(\frac{1}{\rho_1^{N_1-1}} + \frac{1}{\rho_2^{N_2-1}}
  \right) < 1, 
  \]
  so that, for $x \geq 1, x + \sqrt{x^2-1} < \rho_1$, $y \in [-1, 1]$, we have 
  \[
    \left|W(ux,v(f(x)+y)) - Q_{N_1 - 1, N_2 - 1}(x, y)\right| \le 2 (3+2\sqrt{2})^{N_1 - 1}.
  \]

Finally, recall that $Q_{N_1 - 1, N_2-1}(x,y) = \sum a_{k_1, k_2} T_{k_1}(x) T_{k_2}(y)$
with $\sum |a_{k_1, k_2}| \le 1$. This shows that, for $x\geq 1$, $y\in [-1, 1]$, 
\[
|Q_{N_1 - 1, N_2 - 1}(x, y)| \le 2 (|x| + \sqrt{x^2-1})^{N_1 - 1}. 
\]

Overall, we find that, for $x \in [1, 3]$, $y \in [-1, 1]$, 
\[
\left|W(ux, v(f(x) + y))\right|
 \le 4 (3+2\sqrt{2})^{N_1-1}. 
\]

This suggests that prereduction starts with a matrix with coefficients
of size $2^{O(pd)}$ and turns it, at the cost of a matrix-matrix product,
into a matrix with coefficients of order $2^{O(N_1)}$. 

The proof shows that, intimately, the success of the prereduction
technique is closely related to the design of our algorithm, in
particular the fact that we approximate the function itself rather
than replacing it by a polynomial at an early stage. Both this
heuristic study and the experiments that follow show that
prereduction is an important optimization and constitutes a very
significant practical improvement over the state of the art.

\subsection{Experimental support}
We have experimented with the exponential function, for $x \approx 1/4$.
Experiments are reported in Table~\ref{tab:prered_exper}.

Table~\ref{tab:prered_exper} shows an excellent qualitative accordance
between the output of Algorithm~\ref{algo:2variables} and the
heuristics presented in this section.

\begin{remark}
For the SLZ algorithm, our experiments showed that the
prereduction idea slightly degrades, instead of improving, the input
of the LLL step. 
\end{remark}

\begin{table}
\begin{center}
\begin{tabular}{|c|c|c|c|c|} 
  \hline
  \hspace*{0cm} $(d, p, N_1)$ & $(6,  113, 28)$ &  $(8,  113, 39)$ &   $(10, 113, 56)$ &
  $(12, 113, 60)$\\
  \hline
  $\log_2(\|M\|_\infty) - \Prec$ & 668 & 889 & 1109 & 1328\\
  \hline
  $pd$ & 678 & 904 & 1130 & 1356\\
  \hline
  \hline
  $\log_2(\|WM\|_\infty) -\Prec$ & 50 & 83 & 120 & 120 \\
  \hline
  $(N_1-1) \log_2(3+2\sqrt{2})$ & 69 & 97 & 140 & 151\\
  \hline
\end{tabular}
\end{center}
\caption{Effects of the prereduction for our method for the $\exp$ function near $1/4$, and comparison with the behavior predicted by the heuristic.\label{tab:prered_exper}}
\end{table}

\section{Comparison with previous work}\label{sec:comparison}

Our method bears a strong resemblance with Stehlé's
work~\cite{Stehle2006}. From an algorithmic point of view, our approach
mostly differs by the use of more intrinsic techniques.

Firstly, Stehlé reduces (\ref{probgen}) to the case of a polynomial $P$. In
order to do this, he introduces $P$ such that $|P(x) - f(x)| \le
\varepsilon$, for $x\in [a, b]$, and replacing $1/w$ by $1/w + \varepsilon$; the
solutions to this modified problem contain the solutions to the initial
one. Compared to this, we work with the function all along. To this end,
following~\cite{BriJol10}, we introduce an explicit representation of
functions in the form of a polynomial approximation plus a remainder.

Secondly, Stehlé uses Taylor approximation for computing $P$; we
rather use the sharper Chebyshev interpolation.

As already discussed, the first point is key to the success of the
prereduction trick; it also allows us to use a dense representation of
our auxiliary polynomials, which leads us to manipulate almost square
matrices, whereas Stehlé's matrices are inherently more rectangular,
because he has to represent all coefficients of the somewhat sparse
bivariate polynomials he manipulates.

Our analysis is also more precise in the case of large $w$, which,
for large $uv$, is needed for the problem to be tractable in practice. 
Table~{\ref{tab:comparison}} shows that if one wants $w$ to be of the
order of $(uv)^{c\cdot d}$ for $c < 1$, 
our analysis is sharper than Stehlé's. 
Going further in this direction,
Stehlé proved that Problem~\ref{probgen} could be solved
in polynomial time for $w \ge (uv)^{O(\log(uv))}$ (a similar result is
obtained by purely mathematical means, for the exponential function,
by Nesterenko and Waldschmidt \cite{NW95}), a bound which we improve 
to $w \ge (uv)^{O(\log(uv)/\log\log(uv))}$ for moderate-growth entire functions
(see Theorem~\ref{thm:ordre_fini}) by using complex-analytic techniques.

\begin{table}[htbp]
\begin{center}
\begin{tabular}{|l|c|c|c|c|c|c|} 
\hline
& References   & Parameters   & Matrix       & Complexity & Bound\\ 
&              & to be chosen & dimensions   &  &       on $w$ \\
\hline
BH$_{d\rightarrow \infty}$ & Thm.~\ref{thm:cplx2D}& $\gamma = d^2/(2\kappa)$,  &  $\approx d^2/2 \times 3d^2/2$& $\frac{2\kappa}{3d \psi(\kappa)}$ & $\frac{d}{3\psi(\kappa)}$ \\
  & +Prop.~\ref{prop:asymppsi}  & $\kappa \ge 1$ & & &\\
\hline
  BH$_{d\rightarrow \infty}$ & Thm.~\ref{thm:cplx2D} & $\gamma = o(d^2)$ &  $\approx d^2/2 \times 3d^2/2$ & $\frac{1}{2\sqrt{\gamma}}$ & $\sqrt{\gamma}/2$ \\
    & +Prop.~\ref{prop:asymppsi}  & & & &\\
\hline
\hline
  S$_{d \rightarrow \infty}$ &  \cite[Thm. 3]{Stehle2006} & $\xi \ge 1$ & $d^2/2 \times O(\xi d^2)$ & $\frac{1}{2\sqrt{\xi}}$ & $\sqrt{\xi}/2$\\
  \hline
\end{tabular}
\end{center}
\caption{Comparison of the main methods of this paper and~\protect\cite{Stehle2006}. For the last two columns, the value given is the exponent of $uv$ in the complexity or the bound (up to the $(1+o(1))$ which has been omitted here). For an easier comparison, we introduce $\xi = (n_1+n_2)^2/(4(n_1 - t)^2)$ (in Stehlé's notation) and use $d$ for Stehlé's $\alpha$.  }
  \label{tab:comparison}
\end{table}

Table~\ref{tab:comparison} only estimates the exponential part of the
complexity, while the polynomial part, which is dominated by the cost
of lattice basis reduction, also plays an important practical role.
Akhavi-Stehlé's random projection trick somewhat reduces the influence
of the dimension (mentioned in Table~\ref{tab:comparison}), but the
size of the integers involved in the two methods differ. Roughly
speaking, and ignoring the dependency on $f, a, b$ which is similar
for the two methods:
  \begin{itemize}
  \item in the case of our method, the size of the integers
    involved is $\approx d\log (\max(u,v)) + \Prec \approx d \log \max(u, v) + \log \max(\rho_1^{N_1}, \rho_2^{N_2})$; since $\log w \asymp \log \rho_2 \asymp \log \gamma$, $N_2 \asymp d/\sqrt{\gamma}$ and we expect,
    for optimal choices of parameters, that $N_1 \log \rho_1$ and $N_2 \log \rho_2$
    have the same order of magnitude, this size is thus
    \[
    O(d \left( \log \max(u, v) + (\log \gamma) / \sqrt{\gamma}\right));
    \]
  \item in Stehlé's paper, the integers involved are of the order of
    $(M N_2N_1^{d_{\textrm{Ste}}})^{\alpha_{\textrm{Ste}}}$, which, in our notations, means that their size is of the order of $O(d(\log \gamma + \log \max(u, v)))$. 
  \end{itemize}

We may thus also have a significant improvement on the size of the integers
involved when $\gamma$ is large.

\section{Experimental results}\label{sec:expresults}

We have implemented TMD-oriented (cf. Problem~\ref{probdir}) versions of our algorithms in C. Our codes are  available from~\url{https://hal.science/hal-04474530/file/LACoR.zip}. The tests hereafter were executed on an Intel Xeon E5620 2.40GHz CPU, using a single thread.

In the example that we address, we cut the binades under consideration into subintervals of the same size and we apply the algorithms to each subinterval.
 The subinterval will correspond to the interval $[a_1,b_1]$, while $[a_2,b_2]$ will be equal to $[-1/w,1/w]$, cf. Problem~\ref{probgen}.

Currently, the most expensive part of our algorithms is the LLL
reduction. In our implementation, we used a random projection trick
inspired from~\cite{AkhaviStehle2008},
cf. Theorem~\ref{thm:AkhaviStehle}; this trick had a significant
impact on our results. 
 
Another optimization comes from the use of Newton polynomials instead
of monomials (as pointed in Section~\ref{sssec:newton2D}): for given
values of $d, N, N_1, N_2$, it makes it possible to process larger
subintervals.

The timings and the values of $\log_2(w)$ presented with a $^*$ are estimated ones: we performed our computations on a subinterval and then extrapolate the timing to address the whole binade, and the corresponding value of $\log_2(w)$.

We  chose to limit the evaluation of our algorithms on feasible computations in   binary128, namely computations that could be performed in real   life, possibly using a large cluster. In terms of the bound on $w$,   we have thus excluded the optimal case of the TMD, namely $w \approx   2^{2p}$, and have started at $w \approx 2^{4p}$.

\subsection{Algorithms~\ref{algo:build_two_var} and~\ref{algo:2variables} in action:  the TMD for the exponential function in binary128} \label{subsec:exper_algo3and4}

We used our implementation of Algorithms~\ref{algo:build_two_var}
and~\ref{algo:2variables} to address the TMD for the exponential
function over $[1/4, 1/2)$, for directed rounding functions and for
  the precision $p = 113$. 
More precisely, we address the following question, for various
values of the parameter $w$: determine the integers $X$, $1/4 \le
X/2^{p+1} < 1/2$ for which there exists $Y \in \rel$ satisfying
\[
\left | \exp\left( \frac{X}{2^{p+1}} \right) - \frac{Y}{2^{p-1}} \right | < \frac{1}{w}.
\]

We mention that, for $p = 113$, \cite{KheVou2011} implies\footnote{For the choice $(K,L,E) = (84,59,109.44...)$} that there is no solution for $w = 2^{297.11p}$; to the best of our knowledge, this is the best theoretical result on this question.

We report in Table~\ref{tab:exp} our results. 
We first set $p = 113, u = 2^{p+1}, v = 2^{p-1}, b_2 = - a_2 = 1/w$.
The choice of the parameters $d, N_1, N_2, \rho_1, b_1-a_1$ is then
made in order to minimize the time for treating the whole binade $[1/4, 1/2)$.
We finally fix $\rho_2 = \min(\rho_1^{N_1/N_2},1/b_2)$.

\begin{table}[htbp]
\begin{center}
\begin{tabular}{|c|c|c|c|c|c|c|c|c|} 
\hline
$\log_2(w)$ & $d$  & $N$ & $N_1$ & $N_2$ & $\rho_1$ & $b_1-a_1$ & Timing & \% LLL \\
\hline
$4p$ & 6 & 28 & 20 & 3 & $2^{39.8}$ & $2^{-36.75}$ & $193^{*}$ years & 78\%\\
\hline
$6p$ &  8 & 90  & 45  & 2 & $2^{28.4}$ & $2^{-26.55}$ & $1.56^{*}$ years & 82\% \\
\hline
$8p$ & 9 & 55  & 50  & 2 & $2^{24.3}$ & $2^{-23.55}$ & 36.3 days & 89\%\\
\hline
$10p$& 10 & 66  & 70  & 2 & $2^{25}$ & $7/2^{23}$ & 11.5 days & 89\% \\
\hline
$12p$ & 12 & 91 & 80  & 2 & $2^{21}$ & $5/2^{19}$ &  5.0 days & 96\% \\
\hline
\end{tabular}
\end{center}
\caption{Algo.~\ref{algo:build_two_var} and~\ref{algo:2variables}: $\exp$ over the binade $[1/4,1/2)$}\label{tab:exp}
\end{table}

\subsection{Comparison with Bacsel}
We have compared our results with
BaCSeL-4.0\footnote{\url{https://gitlab.inria.fr/zimmerma/bacsel},
this version integrates our port of Akhavi-Stehlé's trick.}, which
implements~\cite{Stehle2006}.  For the comparison to be fair, we have
included the Akhavi-Stehlé's trick
(cf. Theorem~\ref{thm:AkhaviStehle}) in BaCSeL. We have also tried to
include the prereduction trick but the latter, in the setting
of~\cite{Stehle2006}, seems to make the reduction more costly. In this
case, the complete timings are merely estimates for the cost of
treating a whole binade.

The results are reported in Table~\ref{tab:exp_bacsel}. 
\begin{table}[htbp]
\begin{center}
\begin{tabular}{|c|c|c|c|c|c|c|} 
\hline
$\log_2(w)$ & $\alpha_{\textrm{Ste}}$  & {$d_{\textrm{Ste}}$} & {$t_{\textrm{Ste}}$} &
Timing  & Comparison with\\
            & & & & & this paper \\
  \hline
  $4p$ & 5 & 10 & 71 & $\approx 19200^*$ years & $\times 100$\\
  \hline
$6p$ &  8 & 20 & 83.7 & $422^*$ years & $\times 270$ \\
\hline
$8p$ &  9 & 30 & 87.4  & $90^*$ years & $\times 906$ \\ 
\hline
$10p$&  10 & 42 & 91 & $30^*$ years & $\times 953$\\
\hline
$12p$ & 12 & 56 & 94.3 & $31^*$ years & $\times 2264$ \\
\hline
\end{tabular}
\end{center}
\caption{Stehlé's BaCSeL parameters and timings for the exponential function over the binade $[1/4,1/2)$}\label{tab:exp_bacsel}
\end{table}

We observe that we gain two to three orders of magnitude, increasing with
the value of $\alpha_{\textrm{Ste}}$ (= $d$); our method allows for
slightly larger intervals (by a factor around 2, which seems to
decrease with $d$), but the main factors explaining the difference are
the fact that the entries of our lattices are smaller and already close to being reduced thanks to the ``prereduction trick''.

\section*{Acknowledgements.} We wish to thank Jean-Michel Muller for so many invaluable discussions, Martin Albrecht for his kind and effective help about fplll and Paul Zimmermann for his thorough and extremely helpful rereading of this paper.

\bibliographystyle{abbrv}
\bibliography{nbrisebarre}

@techreport{CRLIBM2006,
    author={C. Daramy-Loirat and D. Defour and F. de Dinechin and M. Gallet and N. Gast
        and C. Q. Lauter and J.-M. Muller},
    title={{CR-LIBM}, A library of correctly-rounded elementary functions in double-precision},
    institution={LIP Laboratory, Arenaire team},
    year={2006},
    month= dec,
    address={Available at \url{https://ens-lyon.hal.science/ensl-01529804/file/crlibm.pdf}}
}

@article {BB2002,
    AUTHOR = {Bauer, Mark and Bennett, Michael A.},
     TITLE = {Applications of the hypergeometric method to the generalized
              {R}amanujan-{N}agell equation},
   JOURNAL = {Ramanujan J.},
  FJOURNAL = {Ramanujan Journal. An International Journal Devoted to the
              Areas of Mathematics Influenced by Ramanujan},
    VOLUME = {6},
      YEAR = {2002},
    NUMBER = {2},
     PAGES = {209--270},
      ISSN = {1382-4090,1572-9303},
   MRCLASS = {11D45 (11D61 11J82 11J86)},
  MRNUMBER = {1908198},
MRREVIEWER = {Yann\ Bugeaud},
       DOI = {10.1023/A:1015779301077},
}

@inproceedings{SZG2022,
  author       = {Alexei Sibidanov and
                  Paul Zimmermann and
                  St{\'{e}}phane Glondu},
  title        = {The {CORE-MATH} Project},
  booktitle    = {29th {IEEE} Symposium on Computer Arithmetic, {ARITH} 2022, Lyon,
                  France, September 12-14, 2022},
  pages        = {26--34},
  publisher    = {{IEEE}},
  year         = {2022},
  url          = {https://doi.org/10.1109/ARITH54963.2022.00014},
  doi          = {10.1109/ARITH54963.2022.00014},
  bibsource    = {dblp computer science bibliography, https://dblp.org}
}

@article {Beukers1980,
    AUTHOR = {Beukers, F.},
     TITLE = {On the generalized {R}amanujan-{N}agell equation. {I}},
   JOURNAL = {Acta Arith.},
  FJOURNAL = {Polska Akademia Nauk. Instytut Matematyczny. Acta Arithmetica},
    VOLUME = {38},
      YEAR = {1980-1981},
    NUMBER = {4},
     PAGES = {389--410},
      ISSN = {0065-1036},
   MRCLASS = {10B99 (10F05)},
  MRNUMBER = {621008},
MRREVIEWER = {Masahiko\ Fujiwara},
       DOI = {10.4064/aa-38-4-389-410},
}

@article {Beukers1981,
    AUTHOR = {Beukers, F.},
     TITLE = {On the generalized {R}amanujan-{N}agell equation. {II}},
   JOURNAL = {Acta Arith.},
  FJOURNAL = {Polska Akademia Nauk. Instytut Matematyczny. Acta Arithmetica},
    VOLUME = {39},
      YEAR = {1981},
    NUMBER = {2},
     PAGES = {113--123},
      ISSN = {0065-1036},
   MRCLASS = {10B99 (10F05)},
  MRNUMBER = {639621},
MRREVIEWER = {Masahiko\ Fujiwara},
       DOI = {10.4064/aa-39-2-113-123},
}

@article {Rivoal2007,
    AUTHOR = {Rivoal, Tanguy},
     TITLE = {Convergents and irrationality measures of logarithms},
   JOURNAL = {Rev. Mat. Iberoam.},
  FJOURNAL = {Revista Matem\'atica Iberoamericana},
    VOLUME = {23},
      YEAR = {2007},
    NUMBER = {3},
     PAGES = {931--952},
      ISSN = {0213-2230,2235-0616},
   MRCLASS = {11J82},
  MRNUMBER = {2414498},
MRREVIEWER = {Tapani\ Matala-Aho},
       DOI = {10.4171/RMI/519},
}

@article {AKR2018,
    AUTHOR = {Aanderaa, St{\aa}l and Kristiansen, Lars and Ruud, Hans
              Kristian},
     TITLE = {Search for good examples of {H}all's conjecture},
   JOURNAL = {Math. Comp.},
  FJOURNAL = {Mathematics of Computation},
    VOLUME = {87},
      YEAR = {2018},
    NUMBER = {314},
     PAGES = {2903--2914},
      ISSN = {0025-5718,1088-6842},
   MRCLASS = {11Y50 (11D25 65A05)},
  MRNUMBER = {3834691},
MRREVIEWER = {Masanari\ Kida},
       DOI = {10.1090/mcom/3298},
}

@article {CHS2009,
    AUTHOR = {Jim\'enez Calvo, I. and Herranz, J. and S\'aez, G.},
     TITLE = {A new algorithm to search for small nonzero {$|x^3-y^2|$}
              values},
   JOURNAL = {Math. Comp.},
  FJOURNAL = {Mathematics of Computation},
    VOLUME = {78},
      YEAR = {2009},
    NUMBER = {268},
     PAGES = {2435--2444},
      ISSN = {0025-5718,1088-6842},
   MRCLASS = {11Y50 (11D25)},
  MRNUMBER = {2521296},
MRREVIEWER = {Konstantinos\ Draziotis},
       DOI = {10.1090/S0025-5718-09-02240-6},
}

@article {HeathBrown2002,
    AUTHOR = {Heath-Brown, D. R.},
     TITLE = {The density of rational points on curves and surfaces},
   JOURNAL = {Ann. of Math. (2)},
  FJOURNAL = {Annals of Mathematics. Second Series},
    VOLUME = {155},
      YEAR = {2002},
    NUMBER = {2},
     PAGES = {553--595},
      ISSN = {0003-486X,1939-8980},
   MRCLASS = {11G35 (11G50 14G05 14G40)},
  MRNUMBER = {1906595},
MRREVIEWER = {Carlo\ Gasbarri},
       DOI = {10.2307/3062125},
}

@article {Jarnik1926,
    AUTHOR = {Jarn\'{\i}k, Vojt\v{e}ch},
     TITLE = {\"{U}ber die {G}itterpunkte auf konvexen {K}urven},
   JOURNAL = {Math. Z.},
  FJOURNAL = {Mathematische Zeitschrift},
    VOLUME = {24},
      YEAR = {1926},
    NUMBER = {1},
     PAGES = {500--518},
      ISSN = {0025-5874,1432-1823},
   MRCLASS = {99-04},
  MRNUMBER = {1544776},
       DOI = {10.1007/BF01216795},
}

@unpublished{BH2023,
  TITLE = {{Integer points close to a transcendental curve and correctly-rounded evaluation of a function}},
  AUTHOR = {Brisebarre, Nicolas and Hanrot, Guillaume},
  URL = {https://hal.science/hal-03240179},
  NOTE = {working paper or preprint},
  YEAR = {2023},
  HAL_ID = {hal-03240179},
  HAL_VERSION = {v4},
}

@article {BHMZ2024,
author = {Brisebarre, Nicolas and Hanrot, Guillaume and Muller, Jean-Michel and Zimmermann, Paul},
title = {Correctly Rounded Evaluation of a Function: Why, How, and at What Cost?},
year = {2025},
issue_date = {January 2026},
publisher = {Association for Computing Machinery},
address = {New York, NY, USA},
volume = {58},
number = {1},
issn = {0360-0300},
doi = {10.1145/3747840},
journal = {ACM Comput. Surv.},
month = sep,
articleno = {27},
numpages = {34}
}

@article {HvdH2021,
    AUTHOR = {Harvey, David and van der Hoeven, Joris},
     TITLE = {Integer multiplication in time {$O(n \log n)$}},
   JOURNAL = {Ann. of Math. (2)},
  FJOURNAL = {Annals of Mathematics. Second Series},
    VOLUME = {193},
      YEAR = {2021},
    NUMBER = {2},
     PAGES = {563--617},
      ISSN = {0003-486X},
   MRCLASS = {11Y16 (68W30)},
  MRNUMBER = {4224716},
MRREVIEWER = {Markus Hittmeir},
       DOI = {10.4007/annals.2021.193.2.4},
}

@ARTICLE{Johansson2017,
 AUTHOR = {Johansson, Fredrik},
     TITLE = {Arb: efficient arbitrary-precision midpoint-radius interval
              arithmetic},
   JOURNAL = {IEEE Trans. Comput.},
  FJOURNAL = {Institute of Electrical and Electronics Engineers.
              Transactions on Computers},
    VOLUME = {66},
      YEAR = {2017},
    NUMBER = {8},
     PAGES = {1281--1292},
      ISSN = {0018-9340},
   MRCLASS = {Expansion},
       DOI = {10.1109/TC.2017.2690633},
}

@article{Xu2016,
 AUTHOR = {Xu, Kuan},
     TITLE = {The {C}hebyshev points of the first kind},
   JOURNAL = {Appl. Numer. Math.},
  FJOURNAL = {Applied Numerical Mathematics. An IMACS Journal},
    VOLUME = {102},
      YEAR = {2016},
     PAGES = {17--30},
      ISSN = {0168-9274},
   MRCLASS = {41A05 (42C05 65D05)},
MRREVIEWER = {Alexander P. Goncharov},
       DOI = {10.1016/j.apnum.2015.12.002},
}

@article{BombieriPila1989,
    AUTHOR = {Bombieri, E. and Pila, J.},
     TITLE = {The number of integral points on arcs and ovals},
   JOURNAL = {Duke Math. J.},
  FJOURNAL = {Duke Mathematical Journal},
    VOLUME = {59},
      YEAR = {1989},
    NUMBER = {2},
     PAGES = {337--357},
      ISSN = {0012-7094},
   MRCLASS = {11P21 (11D99)},
MRREVIEWER = {Ulrich Rausch},
}

@article{Gramain1984,
    AUTHOR = {Gramain, F.},
     TITLE = {{Sur le lemme de Siegel (d’après E. Bombieri et J. Vaaler)}},
   JOURNAL = {Publications mathématiques de l’Univ. P. et M. Curie},
  FJOURNAL = {Publications mathématiques de l’Université Pierre et Marie Curie},
    VOLUME = {64},
      YEAR = {1983-1984},
     PAGES = {fascicule 1},
}

@article {Knill2014,
    AUTHOR = {Knill, Oliver},
     TITLE = {Cauchy-{B}inet for pseudo-determinants},
   JOURNAL = {Linear Algebra Appl.},
  FJOURNAL = {Linear Algebra and its Applications},
    VOLUME = {459},
      YEAR = {2014},
     PAGES = {522--547},
      ISSN = {0024-3795},
   MRCLASS = {15A15},
MRREVIEWER = {Ravindra B. Bapat},
       DOI = {10.1016/j.laa.2014.07.013},
}

@PhDThesis{TorresPhd,
  key = "torresPhd", 
  author = {Torres, Serge},
  title = {\href{https://tel.archives-ouvertes.fr/tel-01396907}{Tools for the {D}esign of {R}eliable and {E}fficient {F}unctions {E}valuation {L}ibraries}},
  school = {\'Ecole normale sup\'erieure de Lyon -- {U}niversit\'e de {L}yon, {L}yon, {F}rance},
  year = {2016},
  howpublished = {\url{https://tel.archives-ouvertes.fr/tel-01396907}},
  url = {https://theses.hal.science/tel-01396907},
 language = {French},
}

@book{Zygmund2002,
    AUTHOR = {Zygmund, A.},
     TITLE = {Trigonometric series. {V}ol. {I}, {II}},
    SERIES = {Cambridge Mathematical Library},
   EDITION = {Third},
 PUBLISHER = {Cambridge University Press, Cambridge},
      YEAR = {2002},
     PAGES = {xii; Vol. I: xiv+383 pp.; Vol. II: viii+364},
      ISBN = {0-521-89053-5},
}

@book{Boyd2001,
	Address = {Mineola, NY},
	Author = {Boyd, John P.},
	Edition = {Second},
	Pages = {xvi+668},
	Publisher = {Dover Publications Inc.},
	Title = {Chebyshev and {F}ourier spectral methods},
	Url = {http://www-personal.umich.edu/~jpboyd/BOOK_Spectral2000.html},
	Year = {2001}
}

@book{Cheney1982,
    AUTHOR = {Cheney, E. W.},
     TITLE = {Introduction to approximation theory},
      NOTE = {Reprint of the second (1982) edition},
 PUBLISHER = {AMS Chelsea Publishing, Providence, RI},
      YEAR = {1998},
     PAGES = {xii+259},
      ISBN = {0-8218-1374-9},
   MRCLASS = {41-01 (41-02)},
}

@book{FoxParker1968,
    AUTHOR = {Fox, L. and Parker, I. B.},
     TITLE = {Chebyshev polynomials in numerical analysis},
 PUBLISHER = {Oxford University Press, London-New York-Toronto, Ont.},
      YEAR = {1968},
     PAGES = {ix+205},
   MRCLASS = {65.10},
MRREVIEWER = {G. N. Lance},
}

@book{Mason2002,
  title={Chebyshev polynomials},
  author={Mason, John C. and Handscomb, David C.},
  year={2002},
  publisher={CRC Press}
}

@book{Powell1981,
  title={Approximation theory and methods},
  author={Powell, Michael James David},
  year={1981},
  publisher={Cambridge University Press}
}

@book{Trefethen2013,
	Author = {Trefethen, Lloyd Nicholas},
	Publisher = {SIAM},
	Title = {\href{https://www.chebfun.org/ATAP/}{Approximation {T}heory and {A}pproximation {P}ractice}},
	Url = {https://www.chebfun.org/ATAP/},
	Year = {2013}}

@book{Rivlin1974,
    AUTHOR = {Rivlin, Theodore J.},
     TITLE = {The {C}hebyshev polynomials},
      NOTE = {Pure and Applied Mathematics},
 PUBLISHER = {Wiley-Interscience [John Wiley \& Sons], New
              York-London-Sydney},
      YEAR = {1974},
     PAGES = {vi+186},
}

@book{Tucker2011,
	Address = {Princeton, NJ},
	Author = {Tucker, Warwick},
	Isbn = {978-0-691-14781-9},
	Mrclass = {65-02 (65G30)},
	Mrreviewer = {G. Alefeld},
	Note = {A short introduction to rigorous computations},
	Pages = {xii+138},
	Publisher = {Princeton University Press},
	Title = {Validated numerics, A short introduction to rigorous computations},
	Year = {2011}
}

@string{Springer="Springer-Verlag"}

@string{IEEE="IEEE Computer Society Press"}

@inproceedings{BrisebarreChevillard2007,
 author = {N. Brisebarre and S. Chevillard},
 title = {Efficient polynomial {$L^\infty$} approximations},
 booktitle = {ARITH '07: Proceedings of the 18th IEEE Symposium on Computer Arithmetic},
 year = {2007},
 isbn = {0-7695-2854-6},
 pages = {169--176},
 doi = {http://dx.doi.org/10.1109/ARITH.2007.17},
 publisher = {IEEE Computer Society},
 address = {Washington, DC},
 }

@inproceedings{Elkies2000,
author= {Elkies, N.~D.},
title={Rational points near curves and small nonzero $|x^3-y^2|$ via lattice
reduction},
booktitle={Proceedings of the 4th {A}lgorithmic {N}umber {T}heory
{S}ymposium ({ANTS IV})},
publisher= {Springer-Verlag, Berlin},
editor    = {Wieb Bosma},
series= {Lecture Notes in Computer Science},
year= 2000,
pages= {33--63},
volume =1838}

@BOOK{vzGG2013,
        AUTHOR             = {von zur Gathen, J. and Gerhard, J.},
        PUBLISHER          = {Cambridge University Press},
        TITLE              = {Modern computer algebra},
        YEAR               = {2013},
        edition = {third},
        language = {English}
}

@inproceedings{Stehle2006,
author= {Stehl{\'e}, D.},
title={On the Randomness of Bits Generated by Sufficiently Smooth Functions},
booktitle={Proceedings of the 7th Algorithmic Number Theory Symposium,
                  ANTS VII},
publisher= {Springer-Verlag, Berlin},
editor    = {F.~Hess and S.~Pauli and M.~E.~Pohst},
series={Lecture Notes in Computer Science},
year= 2006,
pages= {257--274},
volume =4076}

@inproceedings{Coppersmith2001,
    author={Don Coppersmith},
    title={Finding Small Solutions to Small Degree Polynomials},
    booktitle={Proceedings of Cryptography and Lattices {(CaLC})},
    year= 2001,
    publisher={Springer-Verlag, Berlin},
    editor={J.~H. Silverman},
    series={Lecture Notes in Computer Science},
    pages = {20--31},
volume = 2146
}

@article {KheVou2011,
    AUTHOR = {Kh{\'e}mira, Samy and Voutier, Paul},
     TITLE = {Approximation diophantienne et approximants de
              {H}ermite-{P}ad{\'e} de type {I} de fonctions exponentielles},
   JOURNAL = {Ann. Sci. Math. Qu{\'e}bec},
  FJOURNAL = {Annales des Sciences Math{\'e}matiques du Qu{\'e}bec},
    VOLUME = {35},
      YEAR = {2011},
    NUMBER = {1},
     PAGES = {85--116},
      ISSN = {0707-9109},
   MRCLASS = {11J82 (33B10 41A21)},
MRREVIEWER = {Michel Waldschmidt},
}

@PhDThesis{KhemiraPhD,
  author = {Kh{\'e}mira, Samy},
  title = {\href{https://hal.archives-ouvertes.fr/tel-00009653}
          {Approximants de {H}ermite-{P}adé, déterminants d’interpolation et approximation diophantienne}},
  school = {{U}niversit{\'e} {P}aris 6,  {P}aris, 
            {F}rance},
  year = {2005},
  howpublished = {\url{https://hal.archives-ouvertes.fr/tel-00009653}},
  url = {https://tel.archives-ouvertes.fr/tel-00657843},
 language = {French}
}

@phdthesis{Lef2000,
    author={V.~Lef{\`e}vre},
     title={\href{ftp://ftp.ens-lyon.fr/pub/LIP/Rapports/RR/RR1999/RR1999-06.ps.Z}{Moyens Arithm{\'e}tiques Pour un Calcul Fiable}},
    school={{\'E}cole Normale Sup{\'e}rieure de Lyon, Lyon, France},
    year= 2000,
    note= {\url{ftp://ftp.ens-lyon.fr/pub/LIP/Rapports/RR/RR1999/RR1999-06.ps.Z}},
   howpublished = {\url{ftp://ftp.ens-lyon.fr/pub/LIP/Rapports/RR/RR1999/RR1999-06.ps.Z}},
 language = {French},
}

@inproceedings{Lefevre2005,
    author={V.~Lef{\`e}vre},
    title={New Results on the Distance Between a Segment and $\mathbb{Z}^2$. {A}pplication to the Exact Rounding},
    booktitle={Proceedings of the 17th {IEEE} Symposium on Computer Arithmetic (ARITH-17)},
    year={2005},
    month= jun,
    pages={68--75},
    publisher={{IEEE} Computer Society Press, Los Alamitos, CA},
    key={Lefevre2005},
    location={Cape Cod, MA}
}

@inproceedings{LefevreMuller2001a,
    author={V. Lef{\`e}vre and J.-M.~Muller},
    title={Worst Cases for Correct Rounding of the Elementary Functions in Double Precision},
     booktitle={Proceedings of the 15th {IEEE} Symposium on Computer Arithmetic (ARITH-16)},
editor = {N.~Burgess and L.~Ciminiera},
    month = jun,
    year = {2001},
    address = {Vail, CO},
    pages={111--118},
    key={LefevreMuller2001a},
 doi = {http://doi.ieeecomputersociety.org/10.1109/ARITH.2001.930110}
}

@article{NW95,
    author={Y.~V. Nesterenko and M.~Waldschmidt},
    title={On the approximation of the values of exponential function and logarithm
        by algebraic numbers (in {R}ussian)},
    journal={Mat. Zapiski},
    volume= 2,
    year= 1996,
    pages={23--42},
    key={NW95},
    note = {Available in English at \url{https://arxiv.org/abs/math/0002047}}
}

@article{SLZ2005,
    author={D.~Stehl{\'e} and V.~Lef{\`e}vre and P.~Zimmermann},
    title={{S}earching {W}orst {C}ases of a {O}ne-{V}ariable {F}unction {U}sing {L}attice {R}eduction},
   JOURNAL = {IEEE Trans. Comput.},
   fjournal={{IEEE} Transactions on Computers},
    volume={54},
    year={2005},
    pages={340--346},
    month= mar,
    number={3},
    key={SLZ2005},
    language = {English},
}

@phdthesis{Lauter2008,
    author={C. Q. Lauter},
    title={\href{https://www.christoph-lauter.org/these.pdf}{Arrondi Correct de Fonctions Math{\'e}matiques}},
    school={{\'Ecole Normale Sup{\'e}rieure de Lyon, Lyon, France}},
    year= 2008,
    month= oct,
    url={https://www.christoph-lauter.org/these.pdf},
    language={French}
}

@book {Cas1997,
    AUTHOR = {Cassels, J. W. S.},
     TITLE = {An introduction to the geometry of numbers},
    SERIES = {Classics in Mathematics},
      NOTE = {Corrected reprint of the 1971 edition},
 PUBLISHER = {Springer-Verlag},
   ADDRESS = {Berlin},
      YEAR = {1997},
     PAGES = {viii+344},
}

@misc{crlibm,
        author = {The {Ar\'enaire} Project},
        title = "{CR}libm, {C}orrectly {R}ounded mathematical library",
        month = {July},
        year = 2006,
        note = "\url{https://gforge.inria.fr/scm/browser.php?group_id=5929&extra=crlibm}"
}

@book {GruLek1987,
    AUTHOR = {Gruber, P. M. and Lekkerkerker, C. G.},
     TITLE = {Geometry of numbers},
    SERIES = {North-Holland Mathematical Library},
    VOLUME = {37},
   EDITION = {2nd},
 PUBLISHER = {North-Holland Publishing Co.},
   ADDRESS = {Amsterdam},
      YEAR = {1987},
     PAGES = {xvi+732},
}

@book {Lov1986,
    AUTHOR = {L.~Lov{\'a}sz},
     TITLE = {An algorithmic theory of numbers, graphs and convexity},
    SERIES = {CBMS-NSF Regional Conference Series in Applied Mathematics},
    VOLUME = {50},
 PUBLISHER = {Society for Industrial and Applied Mathematics (SIAM)},
   ADDRESS = {Philadelphia, PA},
      YEAR = {1986},
     PAGES = {iv+91},
}

@book{Cohen1993,
title={A course in computational algebraic number theory},
booktitle={A course in computational algebraic number theory},
author={H.~Cohen},
ISBN={3--5440--55640--0},
MR={94i:11105},
publisher={Springer-Verlag},
address={Berlin},
series={Graduate Texts in Mathematics},
volume={138},
year={1993}
}

@book{MullerEtAl2010,
         title = {\href{https://www.springer.com/birkhauser/mathematics/book/978-0-8176-4704-9}{Handbook of Floating-Point Arithmetic}},
        author = {Jean-Michel Muller and Nicolas Brisebarre and Florent de Dinechin and Claude-Pierre Jeannerod and Vincent Lef{\`e}vre and Guillaume Melquiond and Nathalie Revol and Damien Stehl{\'e} and Serge Torres},
	pages = {572},
	publisher = {Birkh{\"a}user},
        year = {2010}, 
        language = {English},
        key = "MullerEtAl2010",
}

@inproceedings{AkhaviStehle2008,
  author    = {Akhavi, Ali  and Stehl\'e, Damien},
  title     = {{S}peeding-up {L}attice {R}eduction with {R}andom {P}rojections},
  booktitle = {{LATIN} 2008: Theoretical Informatics, 8th Latin American Symposium,
  B{\'{u}}zios, Brazil, April 7-11, 2008, Proceedings},
  pages     = {293--305},
  year      = {2008},
  crossref  = {DBLP:conf/latin/2008},
  howpublished = {\url{https://hal.archives-ouvertes.fr/hal-00550984}},
  bibsource = {dblp computer science bibliography, http://dblp.org},
  language = {English}
}

@ARTICLE{BonehDurfee2000,
  AUTHOR = {Boneh, Dan and Durfee, Glenn},
     TITLE = {Cryptanalysis of {RSA} with private key {$d$} less than
              {$N^{0.292}$}},
   JOURNAL = {IEEE Trans. Inform. Theory},
  FJOURNAL = {Institute of Electrical and Electronics Engineers.
              Transactions on Information Theory},
    VOLUME = {46},
      YEAR = {2000},
    NUMBER = {4},
     PAGES = {1339--1349},
      ISSN = {0018-9448},
   MRCLASS = {94A60 (11Y05 68Q25)},
MRREVIEWER = {Marc M. Gysin},
       DOI = {10.1109/18.850673},
}

@PhdThesis{ChevillardPhDThesis,
  key = "ChevillardPhDThesis",
  author = 	 {Chevillard, Sylvain},
  title = 	 {\href{https://dumas.ccsd.cnrs.fr/THESES-ENS-LYON/tel-00460776}{{\'Evaluation efficace de fonctions num\'eriques 
              -- Outils et exemples}}},
  school = 	 {\'Ecole normale sup\'erieure de {L}yon -- {U}niversit\'e de {L}yon},
  year = 	 {2009},
  howpublished = {\url{http://www-sop.inria.fr/members/Sylvain.Chevillard/download/papers/TheseSylvainChevillard.pdf}},
  url = {https://theses.hal.science/tel-00460776},
   language = {French},
   note = {In French},
}

@article{LLL1982,
     AUTHOR = {Lenstra, A. K. and Lenstra, Jr., H. W. and Lov\'{a}sz, L.},
     TITLE = {Factoring polynomials with rational coefficients},
   JOURNAL = {Math. Ann.},
  FJOURNAL = {Mathematische Annalen},
    VOLUME = {261},
      YEAR = {1982},
    NUMBER = {4},
     PAGES = {515--534},
      ISSN = {0025-5831},
   MRCLASS = {12-04 (12A20 68C20 68C25)},
MRREVIEWER = {Daniel Lazard},
       DOI = {10.1007/BF01457454},
}

@proceedings{DBLP:conf/latin/2008,
  editor    = {Eduardo Sany Laber and
  Claudson F. Bornstein and
  Loana Tito Nogueira and
  Luerbio Faria},
  title     = {{LATIN} 2008: Theoretical Informatics, 8th Latin American Symposium,
  B{\'{u}}zios, Brazil, April 7-11, 2008, Proceedings},
  series    = {Lecture Notes in Computer Science},
  volume    = {4957},
  publisher = {Springer},
  year      = {2008},
  isbn      = {978-3-540-78772-3},
  bibsource = {dblp computer science bibliography, http://dblp.org}
}

@string{ACM="ACM"}

@book{LLL+25, 
  editor    = {Phong Q. Nguyen and
               Brigitte Vall{\'{e}}e},
  title     = {The {LLL} Algorithm - Survey and Applications},
  series    = {Information Security and Cryptography},
  publisher = {Springer},
  year      = {2010},
  doi       = {10.1007/978-3-642-02295-1},
  isbn      = {978-3-642-02294-4},
  bibsource = {dblp computer science bibliography, http://dblp.org}
}

@inproceedings{Ajtai1998,
  author    = {M.~Ajtai},
  title     = {The {S}hortest {V}ector {P}roblem in {L}$_{\mbox{2}}$ is {NP}-hard
               for {R}andomized {R}eductions ({E}xtended {A}bstract)},
  booktitle = {Proceedings of the 30th {ACM} symposium on Theory of computing (STOC)},
  year      = {1998},
  pages     = {10--19},
  ee        = {http://doi.acm.org/10.1145/276698.276705},
  bibsource = {DBLP, http://dblp.uni-trier.de}
}

@string{Springer="Springer"}

@article{BertheImbert09,
  author    = {Val{\'{e}}rie Berth{\'{e}} and
               Laurent Imbert},
  title     = {Diophantine Approximation, {O}strowski Numeration and the
  Double-Base
               Number System},
    JOURNAL = {Discrete Math. Theor. Comput. Sci.},
   FJOURNAL = {Discrete Mathematics \& Theoretical Computer Science. DMTCS.},
  volume    = {11},
  number    = {1},
  pages     = {153--172},
  year      = {2009},
  url       = {http://dmtcs.episciences.org/450},
  bibsource = {dblp computer science bibliography, https://dblp.org}
}

@article{Coppersmith1997,
  author    = {Don Coppersmith},
  title     = {Small Solutions to Polynomial Equations, and Low Exponent {RSA} Vulnerabilities},
  journal   = {J. Cryptology},
  volume    = {10},
  number    = {4},
  pages     = {233--260},
  year      = {1997},
  doi       = {10.1007/s001459900030},  
  bibsource = {dblp computer science bibliography, https://dblp.org}
}

@article{Strang1999,
        title = {The discrete cosine transform},
       volume = {41},
       number = {1},
      JOURNAL = {SIAM Rev.},
     FJOURNAL = {SIAM Review},
       author = {Strang, G.},
         year = {1999},
        pages = {135--147}
}

@book {PPST2018,
    AUTHOR = {Plonka, Gerlind and Potts, Daniel and Steidl, Gabriele and
              Tasche, Manfred},
     TITLE = {Numerical {F}ourier analysis},
    SERIES = {Applied and Numerical Harmonic Analysis},
 PUBLISHER = {Birkh\"{a}user/Springer, Cham},
      YEAR = {2018},
     PAGES = {xvi+168},
      ISBN = {978-3-030-04305-6; 978-3-030-04306-3},
   MRCLASS = {65-02 (42-02 65Txx 94A12)},
  MRNUMBER = {3890075},
       DOI = {10.1007/978-3-030-04306-3},
}

@article{NguyenS09,
  author    = {Phong Q. Nguyen and
               Damien Stehl{\'{e}}},
  title     = {An {LLL} Algorithm with Quadratic Complexity},
  journal   = {{SIAM} J. Comput.},
  volume    = {39},
  number    = {3},
  pages     = {874--903},
  year      = {2009},
  doi       = {10.1137/070705702},
  bibsource = {dblp computer science bibliography, https://dblp.org}
}

@inproceedings{BriJol10, 
  author       = {Nicolas Brisebarre and
                  Mioara Joldes},
  editor       = {Wolfram Koepf},
  title        = {Chebyshev interpolation polynomial-based tools for rigorous computing},
  booktitle    = {Symbolic and Algebraic Computation, International Symposium, {ISSAC}
                  2010, Munich, Germany, July 25-28, 2010, Proceedings},
  pages        = {147--154},
  publisher    = {{ACM}},
  year         = {2010},
  url          = {https://doi.org/10.1145/1837934.1837966},
  doi          = {10.1145/1837934.1837966}
}

\appendix

\section{Proofs of facts regarding Chebyshev polynomials}\label{app:proofscheby}

First, 
 we recall the aliasing phenomenon in the case of Chebyshev nodes of the first kind.
\begin{proposition}\label{prop:aliasing}
  For all $N\ge 1$,
  \begin{itemize}
  \item for $k = 0, \ldots, N-1$, the polynomials $T_k, - T_{2N -k}, - T_{2N +k},   T_{4N -k}, $ $  T_{4N +k}, \ldots$ take the same values at the $\mu_{j,N-1}$, $j=0,\ldots, N-1$,
  \item for $j \ge 0$, let 
\[
m = | (j+N-1) (\bmod 2N) - N+1 | \textrm{ and } p =\left  \lfloor \frac{N+j}{2N} \right \rfloor, 
\]
the polynomials $T_j$ and $(-1)^{p}T_{m}$ take the same values at the $\mu_{j,N-1}$, $j=0,\ldots, N-1$.
  \end{itemize}
\end{proposition}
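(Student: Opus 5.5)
The plan is to work through the trigonometric parametrisation of the nodes. Write $\mu_{j,N-1} = \cos\theta_j$ with $\theta_j = (N-1-j+1/2)\pi/N = (2(N-1-j)+1)\pi/(2N)$, so $\theta_j$ is an odd multiple of $\pi/(2N)$. Then $T_k(\mu_{j,N-1}) = \cos(k\theta_j)$, and every claim reduces to an identity between cosines evaluated at angles $\theta = (2r+1)\pi/(2N)$, $r = 0,\ldots,N-1$.

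For the first item, I would compute $(2N\pm k)\theta = (2r+1)\pi \pm k\theta$. Since $\cos((2r+1)\pi + x) = -\cos x$ and cosine is even, this gives $T_{2N\pm k}(\cos\theta) = -\cos(k\theta) = -T_k(\cos\theta)$. In the same way, $(4N\pm k)\theta = 2(2r+1)\pi \pm k\theta$ yields $T_{4N\pm k} = T_k$ at the nodes. More generally, $T_{2qN\pm k}$ agrees with $(-1)^q T_k$ at the nodes, which covers the whole listed pattern.

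For the second item, I would reduce $j$ to this general form. Write $j = 2qN + s$ with $s \in [-N+1, N]$. Concretely, put $s = (j+N-1) \bmod 2N - (N-1)$, so that $|s| = m \le N$ and $q = (j - s)/(2N)$. Then $T_j = T_{2qN + s}$, and since cosine is even, $T_{2qN+s} = T_{2qN\pm m}$, which by the computation above equals $(-1)^q T_m$ at the nodes. It remains to check that $q = \lfloor (N+j)/(2N) \rfloor = p$. This follows from $j + N = 2qN + (s + N)$ together with $s + N \in [1, 2N]$.

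The edge case $s + N = 2N$, i.e.\ $s = N$ and $m = N$, is where I expect care to be needed, because the floor then rounds up to $q+1$. In that case, however, $T_N(\cos\theta) = \cos((2r+1)\pi/2) = 0$ at every node, so any sign is correct and the statement still holds. This bookkeeping of the residue range and the boundary value $m = N$ is the only delicate step. Everything else is direct trigonometry.
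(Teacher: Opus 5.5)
Your proof is correct, but it does not follow the paper, which gives no argument for this statement and simply invokes Theorems~1 and~2 of~\cite{Xu2016}. You instead derive both items from $T_n(\cos\theta)=\cos(n\theta)$ at the angles $\theta=(2r+1)\pi/(2N)$:
\begin{itemize}
\item the single identity $\cos\bigl(q(2r+1)\pi\pm k\theta\bigr)=(-1)^q\cos(k\theta)$ gives the first item;
\item writing $j=2qN+s$ with $s\in[-N+1,N]$, $|s|=m$, $q=\lfloor (j+N-1)/(2N)\rfloor$ reduces the second item to the first.
\end{itemize}

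The citation buys brevity. Your route buys a self-contained proof, and it makes visible a subtlety the citation hides. The natural sign is $(-1)^q$, and it agrees with the stated $(-1)^p$, $p=\lfloor (N+j)/(2N)\rfloor$, except exactly when $j\equiv N \pmod{2N}$, i.e.\ $m=N$. In that case the stated formula holds only because $T_N$ vanishes at all the nodes, which are by definition its zeros. You handle this correctly. Note that there you use your general computation with $k=m=N$, outside the range $0\le k\le N-1$ of the first item; this is fine, since that computation is valid for every integer $k$.

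A side remark on the same degenerate case: the interpolant of $T_j$ there is actually $0$, not $(-1)^pT_N$. This is worth keeping in mind where the appendix later writes $f-p_{N-1}=\sum_{k\ge N}a_k\bigl(T_k-(-1)^pT_m\bigr)$, although the resulting bounds are unaffected.
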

\begin{proof}
  These are Theorems 1 and 2 of~\cite{Xu2016}.
\end{proof}  
 Let $f$ be Lipschitz continuous over $[-1,1]$, we know~\cite[Chap. VI]{Zygmund2002} that $f$ admits a series expansion $\sumprime_{n\ge 0} a_n T_n(x)$ in $L_2 \left([-1,1],(1-x^2)^{-1/2}\right)$ which converges uniformly to $f$. We now state a straightforward consequence of  Proposition~\ref{prop:aliasing}.
\begin{corollary}\label{cor:aliasing}
  Let $N\in \nat, N \ge 1$. Let $f$ be Lipschitz continuous over $[-1,1]$,
  its Chebyshev coefficients $(a_k)_{k\ge 0}$ and the coefficients $c_{k}, k =0, \ldots, N-1,$ of the interpolation polynomial $p_{N-1}$ of $f$ at the Chebyshev nodes of the first kind satisfy, for $k = 0, \ldots, N-1$,
\begin{equation}\label{eq:aliasing}
  c_{k} = \sum_{j=0}^{+\infty}(-1)^j  a_{2jN+k} + (1-\delta_{0 k}) \sum_{j=1}^{+\infty}(-1)^j  a_{2jN-k},
\end{equation}
  where $\delta_{0k}$ is the Kronecker delta.
\end{corollary}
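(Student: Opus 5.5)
The plan is to substitute $x=\cos t$ and reduce Corollary~\ref{cor:aliasing} to the first part of Proposition~\ref{prop:aliasing}.

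\emph{Step 1: rewrite the interpolant.} Since $f$ is Lipschitz, its Chebyshev series $\sumprime_{n\ge 0} a_n T_n$ converges uniformly on $[-1,1]$, hence pointwise at every node. So for $j=0,\ldots,N-1$,
\[
f(\mu_{j,N-1})=\sumprime_{n\ge 0} a_n T_n(\mu_{j,N-1}).
\]
Write each $n\ge 0$ as $n=2mN\pm k$ with $0\le k\le N-1$. The only other possibility is $n\equiv N \pmod{2N}$, and there $T_n(\mu_{j,N-1})=\pm\cos(N\theta_j)=0$ because the nodes are the zeros of $T_N$, so those terms drop out.

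\emph{Step 2: apply aliasing.} By the first part of Proposition~\ref{prop:aliasing}, $T_{2mN+k}$ and $T_{2mN-k}$ agree on the nodes with $(-1)^mT_k$. Sign check: $\cos((2mN\pm k)\theta_j)$ with $N\theta_j=(j+1/2)\pi$ gives $\cos(m(2j+1)\pi\pm k\theta_j)=(-1)^m\cos(k\theta_j)$.

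\emph{Step 3: regroup.} The sum $\sum_n |a_n|$ converges, which I justify in the next paragraph. Given that, we may regroup the series, and on the nodes
\[
f=\sumprime_{k=0}^{N-1}\Bigl(\sum_{m\ge 0}(-1)^m a_{2mN+k}+(1-\delta_{0k})\sum_{m\ge1}(-1)^m a_{2mN-k}\Bigr)T_k .
\]
Two points of bookkeeping follow.
\begin{itemize}
\item For $k=0$ the terms $2mN\pm 0$ coincide, so the second sum is dropped and each $a_{2mN}$ is counted once; this is exactly the factor $(1-\delta_{0k})$.
\item The prime convention halves only $a_0$. This matches the halved $c_0$, since $a_{2mN}$ for $m\ge1$ enters the zeroth coefficient unhalved, consistent with formula~\eqref{eq:aliasing} when the prime convention is applied to both expansions.
\end{itemize}
The right-hand side is a polynomial of degree $\le N-1$ that agrees with $f$ at the $N$ nodes. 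By uniqueness of interpolation it equals $p_{N-1}$, and comparing coefficients in the basis $T_0,\ldots,T_{N-1}$ gives $c_k$ as stated.

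\emph{Main obstacle: justifying the rearrangement.} Uniform convergence alone does not permit regrouping into residue classes mod $2N$. I would first note that in the paper's setting $f$ is analytic on an ellipse, so the $a_n$ decay geometrically and $\sum|a_n|<\infty$ trivially. For merely Lipschitz $f$, I would rely instead on the classical fact that the Chebyshev coefficients of a Lipschitz function are absolutely summable. This is Bernstein's theorem for the $2\pi$-periodic even function $f(\cos t)$, which is Lipschitz, giving Hölder exponent $1>1/2$ and hence an absolutely convergent cosine series (Zygmund). With absolute convergence in hand, the regrouping in Step~3 is legitimate.
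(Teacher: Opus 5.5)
Your route is the one the paper has in mind. The paper gives no argument beyond calling the corollary a straightforward consequence of Proposition~\ref{prop:aliasing}, and for $k\ge 1$ your proof is sound. The terms with $n\equiv N \pmod{2N}$ vanish on the nodes, the sign is $(-1)^m$, and interpolation is unique. Invoking Bernstein's theorem to get $\sum_n |a_n|<\infty$ is exactly what legitimizes splitting the node series into residue classes, a point the paper leaves implicit.

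\textbf{The gap is the case $k=0$.} Your second bookkeeping item claims that \eqref{eq:aliasing} is consistent with the prime convention on both expansions, and this is false. If $f=\sumprime_{n\ge 0}a_nT_n$, then on the nodes the $T_0$-coefficient is $\frac{a_0}{2}+\sum_{m\ge1}(-1)^m a_{2mN}$, since the aliased $a_{2mN}$ are not halved, as you note yourself. Equating it with $\frac{c_0}{2}$ gives $c_0=a_0+2\sum_{m\ge1}(-1)^m a_{2mN}$. Your displayed regrouped identity has $T_0$-coefficient $\frac12\sum_{m\ge0}(-1)^m a_{2mN}$, so it wrongly halves those aliased terms.

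Test it on $f=T_{2N}$. This function equals $-1$ at every node, so $p_{N-1}\equiv -1$. With the paper's own normalization $c_0=\frac{2}{N}\sum_{\ell} f(\mu_\ell)$ (used in the proof of Proposition~\ref{prop:ineqcauchy_app}), this gives $c_0=-2$. Both \eqref{eq:aliasing} and your display instead give $-a_{2N}=-1$.

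What is true is that \eqref{eq:aliasing} holds verbatim when both expansions are read without the prime, i.e.\ $f=\sum_{n\ge 0}a_nT_n$ and $p_{N-1}=\sum_{k=0}^{N-1}c_kT_k$. Then the $T_0$-coefficient on the nodes is $a_0+\sum_{m\ge1}(-1)^m a_{2mN}$, and your $(1-\delta_{0k})$ bullet is exactly right. Under the prime convention on both sides, the correct statement is instead uniform in $k$:
\[
c_k=\sum_{j\ge0}(-1)^j a_{2jN+k}+\sum_{j\ge1}(-1)^j a_{2jN-k},
\]
without the factor $(1-\delta_{0k})$. So you should fix the normalization explicitly (unprimed), rather than assert compatibility with the primed one. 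For what it is worth, the paper only relies on the case $k\ge 1$ in its coefficient bounds, since it estimates $c_0$ directly via the maximum principle.
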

Let  $N\in \nat$, $N \ge 1$, we also define 
\[
\gamma_{\rho,0,N-1} = 1 \textrm{ and }  \gamma_{\rho,k,N-1} =   \frac{1}{1-\rho^{-2N}} \left (  1 +  \frac{1}{\rho^{2(N-k)}} \right) \textrm{ for } k = 1, \ldots, N-1.
\]
\begin{proposition} \label{prop:ineqcauchy_app}
Let $\rho > 1$, let  $N\in \nat$, $N\ge 1$, $f$ be a function analytic in a neighborhood of ${E_\rho}$, the coefficients $(c_{k})_{k =0, \ldots, N-1}$ of the interpolation polynomial of $f$ at the Chebyshev nodes of the first kind satisfy
  \[
  |c_{k} | \le  2 \frac{M_\rho (f)}{\rho^k}  \gamma_{\rho,k,N-1}, k = 0, \ldots, N-1,
  \]
  where $M_\rho (f) = \max_{z \in \mathcal{E}_\rho} |f(z)|$. 
Moreover, we have
\[
\norminfmunpun{ f - p_{N-1} } \le \frac{4 M_{\rho}(f)}{\rho^{N-1} (\rho -1)}.
\]
\end{proposition}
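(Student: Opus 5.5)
We prove Proposition~\ref{prop:ineqcauchy_app} by expressing the interpolation coefficients through the aliasing formula of Corollary~\ref{cor:aliasing}, and bounding the true Chebyshev coefficients $a_n$ by the classical Cauchy-type estimate on the Bernstein ellipse. Since $f$ is analytic in a neighborhood of $E_\rho$, the function $F(z) = f((z+z^{-1})/2)$ is analytic on the annulus $\rho^{-1} \le |z| \le \rho$. Moreover $F(z) = F(z^{-1})$, and the Laurent coefficients of $F$ are $a_n/2$ for $n \ge 1$ (with the $\sumprime$ convention for $n=0$). Cauchy's formula on $|z| = \rho$ then gives
\[
|a_n| \le \frac{2 M_\rho(f)}{\rho^n} \quad (n \ge 0).
\]
This is the standard estimate (see~\cite{Trefethen2013}); I would reprove it in two lines this way. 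In particular $f$ is Lipschitz on $[-1,1]$, so Corollary~\ref{cor:aliasing} applies.

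\textbf{Coefficient bound.} Plugging this into \eqref{eq:aliasing} for $k \ge 1$ gives
\[
|c_k| \le 2M_\rho(f) \Bigl( \sum_{j \ge 0} \rho^{-2jN-k} + \sum_{j\ge 1} \rho^{-2jN+k} \Bigr) = \frac{2M_\rho(f)}{1-\rho^{-2N}} \bigl( \rho^{-k} + \rho^{-2N+k} \bigr).
\]
Factoring out $\rho^{-k}$ leaves $1 + \rho^{-2(N-k)}$, which is exactly $\gamma_{\rho,k,N-1}$.

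For $k=0$ the second sum is absent, and we get $2M_\rho(f)/(1-\rho^{-2N})$. One must be careful here, because $\gamma_{\rho,0,N-1}$ is defined to be $1$. The point is that $c_0$ and $a_0$ enter with the halving convention of $\sumprime$, and the bound $|a_0| \le 2M_\rho(f)$ is wasteful: directly, $|a_0|/2 \le M_\rho(f)$. So the true $a_0$-term is $\le 2M_\rho(f)$ in the normalization used. For $j \ge 1$ the sum $\sum_{j\ge1} |a_{2jN}|$ must then be absorbed. I expect this to be the delicate bookkeeping point.

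As a fallback for $k=0$, I would compute $c_0$ (halved) directly as the discrete mean $\frac1N\sum_j f(\mu_{j,N-1})$. This gives $|c_0/2| \le \max_{[-1,1]}|f| \le M_\rho(f)$, since the maximum modulus principle applies on $E_\rho \supset [-1,1]$. That yields $|c_0| \le 2M_\rho(f)$ cleanly.

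\textbf{Uniform error.} Next I would write
\[
f - p_{N-1} = \sum_{n \ge N} a_n T_n \;-\; \sum_{n\ge N} a_n \, (\text{aliased image of } T_n),
\]
where by Proposition~\ref{prop:aliasing} each $T_n$ with $n \ge N$ is interpolated by $\pm T_m$ with $m<N$. Both families are bounded by $1$ on $[-1,1]$, so
\[
\norminfmunpun{f - p_{N-1}} \le 2\sum_{n\ge N}|a_n| \le 4M_\rho(f)\sum_{n\ge N}\rho^{-n} = \frac{4M_\rho(f)}{\rho^{N-1}(\rho-1)},
\]
which is the claimed bound. The main care needed is justifying the rearrangement, which holds by absolute convergence since $|a_n| = O(\rho^{-n})$, together with the linearity of interpolation applied to the uniformly convergent Chebyshev series.
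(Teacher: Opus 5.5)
Your proposal is correct and follows essentially the paper's proof. It uses the Cauchy bounds $|a_n|\le 2M_\rho(f)\rho^{-n}$ (the paper quotes these from Trefethen rather than rederiving them via the Joukowski map), the aliasing formula for $1\le k\le N-1$, the discrete-mean formula $c_0=\frac{2}{N}\sum_{\ell} f(\mu_{\ell})$ with the maximum modulus principle for $k=0$ (you are right to drop aliasing there, since it cannot give $\gamma_{\rho,0,N-1}=1$), and the bound $2\sum_{n\ge N}|a_n|$ for the remainder; the only slip is that for $n$ an odd multiple of $N$ the interpolant of $T_n$ is $0$ rather than some $\pm T_m$ with $m<N$, which leaves your error bound intact.
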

\begin{proof}
First, we use the following consequence of~\cite[Thm 8.1]{Trefethen2013}: the Chebyshev coefficients satisfy
\begin{equation}\label{eq:cauchyL2}
|a_0| \le M_\rho (f) \textrm{ and } |a_k| \le 2 \frac{M_\rho (f)}{\rho^k}. 
\end{equation}
Then, we combine Eq.~\eqref{eq:aliasing} and Inequalities~\eqref{eq:cauchyL2} to obtain, for  $k = 1, \ldots, N-1$,
\begin{align*}
\left | c_{k} \right | & \le  | a_{k}| +  \sum_{j=1}^{+\infty} (|a_{2jN-k}| +  | a_{2jN+k}|)\\ 
& \le  2 \frac{M_\rho (f)}{\rho^k} \left (  1 + \frac{1}{\rho^{2N-2k}} + \frac{1}{\rho^{2N}} + \frac{1}{\rho^{4N-2k}} + \frac{1}{\rho^{4N}}+ \cdots \right) \\
& \le  2 \frac{M_\rho (f)}{\rho^k}  \frac{1}{1-\rho^{-2N}} \left (  1 + \frac{1}{\rho^{2(N-k)}} \right). 
\end{align*}
Moreover, recall that $c_{0} =  \frac{2}{N} \sum_{1 \le \ell \le N} f(\mu_{\ell})$, hence $\left|c_{0}\right| =  2 \max_{x \in [-1,1]} | f(x)| \le 2 M_\rho (f)$ by the maximum principle. 

Now, we turn to the estimate on the remainder. Corollary~\ref{cor:aliasing} yields, for any $x \in [-1,1]$, 
\[
f(x) - p_{N-1}(x) = \sum_{k \ge N} a_k (T_k(x) - (-1)^p T_m(x))
\]
where $m$ and $p$ are defined as in Proposition~\ref{prop:aliasing}. Hence, we have, for any $x \in [-1,1]$, 
\begin{multline*}
| f(x) - p_{N-1}(x) | \le \sum_{k \ge N} |a_k | | T_k(x) - (-1)^p T_m(x)|\\ \le 2 \sum_{k \ge N} |a_k | \le 4 M_\rho (f) \sum_{k \ge N} \rho^{-k} = \frac{4 M_{\rho}(f)}{\rho^{N-1} (\rho -1)}. 
\end{multline*}
\end{proof}
Regarding the two variable case, we start by establishing results analogous to~\cite[Thms 8.1 and 8.2]{Trefethen2013}.
Let  $f$ in $L_2 \left([-1,1]\times [-1,1],(1-x^2)^{-1/2}(1-y^2)^{-1/2}\right)$, we denote by $\sumprime_{n_1 \ge 0} \sumprime_{n_2 \ge 0}  a_{n_1,n_2} T_{n_1}(x) T_{n_2}(y)$ its series expansion.
\begin{proposition}\label{prop:ineqcauchy2D_app}
Let $\rho_1, \rho_2 > 1$, $f$ be a function analytic in a neighborhood of ${E_{\rho_1,\rho_2}}$, the coefficients $a_{n_1,n_2}, n_1, n_2 \geq 0,$ of the Chebyshev series of $f$ satisfy, for all $n_1$  and $n_2 \in \NN$,
\begin{equation}\label{eq:cauchyL22D}
  |a_{n_1,n_2}| \le 4 \frac{M_{\rho_1,\rho_2} (f)}{\rho_1^{n_1} \rho_2^{n_2}}, 
\end{equation}
where $M_{\rho_1,\rho_2} (f) = \max_{z \in \mathcal{E}_{\rho_1,\rho_2}} |f(z)|$. Moreover, we have, for all $n_1$  and $n_2 \in \NN$, 
\begin{multline*}
\norminfmunpund{f(x,y) - \sumprime_{k_1 = 0}^{n_1} \sumprime_{k_2 = 0}^{n_2}  a_{k_1,k_2} T_{k_1}(x) T_{k_2}(y)} \\  \le \frac{4 \rho_1 \rho_2 M_{\rho_1,\rho_2}(f)}{(\rho_1 -1)(\rho_2 -1)} \left (\frac{1}{\rho_1^{n_1+1} } + \frac{1}{\rho_2^{n_2+1} }\right ).
\end{multline*}
\end{proposition}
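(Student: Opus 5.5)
We prove the bivariate analogue of \cite[Thm 8.1]{Trefethen2013} by reducing to the univariate case via the Joukowski substitution. First, for $f$ analytic in a neighborhood of $E_{\rho_1,\rho_2}$, write the Chebyshev coefficients as a double integral:
\[
a_{n_1,n_2} = \frac{4}{\pi^2}\int_0^\pi\int_0^\pi f(\cos\theta_1,\cos\theta_2)\cos(n_1\theta_1)\cos(n_2\theta_2)\,d\theta_1\,d\theta_2 .
\]
The factor $4/\pi^2$ uses the paper's $\sumprime$ convention, under which $a_{0,\cdot}$ and $a_{\cdot,0}$ are halved in the series. Setting $z_j = e^{i\theta_j}$ and $F(z_1,z_2) = f\bigl((z_1+z_1^{-1})/2,(z_2+z_2^{-1})/2\bigr)$, the function $F$ is analytic on the product of annuli $\rho_j^{-1}\le |z_j|\le \rho_j$ and symmetric under $z_j\mapsto z_j^{-1}$. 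The integral becomes
\[
a_{n_1,n_2} = \frac{1}{(2\pi i)^2}\oint_{|z_1|=1}\oint_{|z_2|=1} F(z_1,z_2)\,(z_1^{n_1}+z_1^{-n_1})(z_2^{n_2}+z_2^{-n_2})\,\frac{dz_1}{z_1}\,\frac{dz_2}{z_2}.
\]
Equivalently, $a_{n_1,n_2}$ is the sum of four Laurent coefficients of $F$ (with multiplicities handled correctly when $n_j=0$).

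Next, bound each Laurent coefficient. By symmetry the four terms are equal (or combine appropriately when $n_j=0$), so $a_{n_1,n_2}$ is $4$ times the coefficient of $z_1^{-n_1}z_2^{-n_2}$, up to the $n_j=0$ conventions. Shift each contour separately to $|z_j|=\rho_j$; this is legitimate since, for fixed $z_2$, the integrand is analytic in $z_1$ on the annulus, and Fubini applies. On the torus $|z_1|=\rho_1$, $|z_2|=\rho_2$, the point $\bigl((z_1+z_1^{-1})/2,(z_2+z_2^{-1})/2\bigr)$ lies on $\mathcal{E}_{\rho_1}\times\mathcal{E}_{\rho_2}$. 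Hence $|F|\le M_{\rho_1,\rho_2}(f)$ there, and the coefficient is at most $M_{\rho_1,\rho_2}(f)\rho_1^{-n_1}\rho_2^{-n_2}$. This gives the bound \eqref{eq:cauchyL22D}. The cases $n_1=0$ or $n_2=0$ only improve the constant, so $4$ is a safe uniform bound. The main care needed here is the bookkeeping of the factor $4$ and the halving conventions at index $0$, which I would check explicitly.

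For the truncation error, note first that the series converges absolutely and uniformly on $[-1,1]^2$ by the coefficient bound, and since $|T_k|\le 1$ there,
\[
\Bigl|f-\sumprime_{k_1\le n_1}\sumprime_{k_2\le n_2}a_{k_1,k_2}T_{k_1}T_{k_2}\Bigr|\le \sum_{(k_1,k_2)\notin[0,n_1]\times[0,n_2]}|a_{k_1,k_2}|.
\]
Split the index set into $\{k_1>n_1\}$ and $\{k_2>n_2\}$; their union covers it. The first sum is bounded by
\[
4M\sum_{k_1>n_1}\rho_1^{-k_1}\sum_{k_2\ge0}\rho_2^{-k_2} = 4M\,\frac{\rho_1^{-n_1}}{\rho_1-1}\cdot\frac{\rho_2}{\rho_2-1} = \frac{4\rho_1\rho_2M}{(\rho_1-1)(\rho_2-1)}\,\rho_1^{-(n_1+1)} ,
\]
and symmetrically for the second sum. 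Adding the two yields exactly the stated inequality, where the halved $0$-index terms only help. The only genuinely delicate point is justifying the contour shift and the identification of $F$ on the torus with values on $\mathcal{E}_{\rho_1,\rho_2}$, which is immediate from the Joukowski parametrization of the ellipse used in the paper's definition of $\mathcal{E}_{\rho,a,b}$.
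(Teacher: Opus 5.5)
Your argument is correct and is essentially the paper's proof: the Joukowski substitution, the two-variable Cauchy formula with the contours moved to the torus $|z_1|=\rho_1$, $|z_2|=\rho_2$ (which maps onto $\mathcal{E}_{\rho_1}\times\mathcal{E}_{\rho_2}$), and then geometric tail sums over $\{k_1>n_1\}\cup\{k_2>n_2\}$. There are two cosmetic slips: the Laurent coefficient that decays on the outer torus is that of $z_1^{n_1}z_2^{n_2}$ (integrand $F z_1^{-n_1-1}z_2^{-n_2-1}$, as in the paper), not of $z_1^{-n_1}z_2^{-n_2}$, though your symmetry remark repairs this; and under the $\sumprime$ convention $a_{n_1,n_2}$ equals exactly $4$ times that coefficient for all $n_1,n_2\ge 0$, so the index-$0$ cases give the same constant $4$ rather than a better one.
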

\begin{proof}
For $\rho >0$, we define $ \mathcal{C}_{\rho} = \{ z \in \comp, |z| = \rho\}$.    Extending what is done in the proof of~\cite[Thm. 8.1]{Trefethen2013}, we now introduce the change of variables  $x = (z_1 + z_1^{-1})/2, y = (z_2 + z_2^{-1})/2$   where $z_1, z_2 \in \mathcal{C}_1$ and the function
  \[
  f(x,y) = F(z_1,z_2) = \sumprime_{n_1 \ge 0} \sumprime_{n_2 \ge 0}  a_{n_1,n_2} \frac{z^{n_1}+z^{-n_1}}{2} \frac{z^{n_2}+z^{-n_2}}{2} .
  \]
 Now we use Cauchy's integral formula in two variables: for all $n_1, n_2 \in \nat$, 
 \[
  \frac{1}{  (2i\pi)^2}   \int_{\mathcal{C}_1 \times \mathcal{C}_1} \frac{F(z_1,z_2)}{z_1^{n_1+1}z_2^{n_2+1}} \, \mathrm{d}z_1 \mathrm{d}z_2
 = \frac{1}{2^{\delta_{0n_1} + \delta_{0n_2}}} \frac{2^{\delta_{0n_1} + \delta_{0n_2}}}{4} a_{n_1,n_2}.  \]
  If $g :  (z_1,z_2) \mapsto ( (z_1 + z_1^{-1})/2, (z_2 + z_2^{-1})/2$, the domain ${E_{\rho_1,\rho_2}}$ is the image of $ \mathcal{R}_{\rho_1} \times \mathcal{R}_{\rho_2}$, where $\mathcal{R}_\rho = \{ z \in \comp, \rho^{-1} < |z| < \rho \}$, via the application $g$. Note that, since $F = f \circ g $, $F$ is analytic in a neighborhood of $ { \mathcal{R}_{\rho_1} } \times {\mathcal{R}_{\rho_2}}$ since it is the composition of two analytic functions, hence, for all  $n_1, n_2 \in \nat$,
  \[ 
  a_{n_1,n_2} = \frac{1}{(i\pi)^2} \int_{\mathcal{C}_{\rho_1} \times \mathcal{C}_{\rho_2}} \frac{F(z_1,z_2)}{z_1^{n_1+1}z_2^{n_2+1}} \, \mathrm{d}z_1 \mathrm{d}z_2,
  \]
  from which follows
   \[
  |a_{n_1,n_2}| \leq  \frac{(2\pi)^2\rho_1\rho_2}{\pi^2} \frac{\max_{(z_1,z_2) \in \mathcal{C}_{\rho_1} \times \mathcal{C}_{\rho_2}} |F(z_1,z_2)|}{\rho_1^{n_1+1}\rho_2^{n_2+1}} =  \frac{4 M_{\rho_1,\rho_2} (f)}{\rho_1^{n_1} \rho_2^{n_2}} \textrm{ for all } n_1, n_2 \in \nat.
  \]
  As for the remainder, for all $x, y \in [-1,1]$, for all $n_1$  and $n_2 \in \NN$, we have 
\begin{multline*}
  f(x,y) - \sumprime_{k_1 = 0}^{n_1} \sumprime_{k_2 = 0}^{n_2}  a_{k_1,k_2} T_{k_1}(x) T_{k_2}(y)  \\ =
    \sum_{k_1 \ge  n_1 + 1} \sumprime_{k_2 \ge 0}  a_{k_1,k_2} T_{k_1}(x) T_{k_2}(y) +
  \sumprime_{k_1 = 0}^{ n_1 } \sum_{k_2 \ge  n_2 +1}  a_{k_1,k_2} T_{k_1}(x) T_{k_2}(y), 
\end{multline*}
  hence,  
\begin{multline*}  
\norminfmunpund{f(x,y) - \sumprime_{k_1 = 0}^{n_1} \sumprime_{k_2 = 0}^{n_2}  a_{k_1,k_2} T_{k_1}(x) T_{k_2}(y)}  \\ = \sum_{k_1 \ge  n_1 + 1} \sumprime_{k_2 \ge 0}  |a_{k_1,k_2}| +   \sumprime_{k_1 = 0}^{ n_1 } \sum_{k_2 \ge  n_2 +1}  |a_{k_1,k_2}| , 
 \\  \le  4   M_{\rho_1,\rho_2} (f) \left ( \frac{1}{\rho_1^{n_1+1}} + \frac{1}{\rho_2^{n_2+1} } \right ) \frac{\rho_1 \rho_2}{(\rho_1 -1)(\rho_2 -1)}.
  \end{multline*}
\end{proof}
Now we can prove:
\begin{proposition}\label{prop:ineqcauchy2D_int_app}
  Let $\rho_1, \rho_2 > 1$, let  $N_1, N_2 \in \nat, N_1, N_2 \ge 2$, $f$ be a function analytic in a neighborhood of ${E_{\rho_1,\rho_2}}$, the coefficients $c_{k_1,k_2}, k_1 =0, \ldots, N_1 -1, k_2 =0, \ldots, N_2 -1$ of the interpolation polynomial $P_{N_1-1,N_2-1}$ of $f$ at pairs of Chebyshev nodes of the first kind satisfy, for $k_1 = 1, \ldots, N_1 -1$, $k_2 = 1, \ldots, N_2 -1$,
  \[
  \left | c_{k_1,k_2} \right | \le   4 \frac{M_{\rho_1,\rho_2} (f)}{\rho_1^{k_1}\rho_2^{k_2}}  \gamma_{\rho_1,k_1,N_1-1}  \gamma_{\rho_2,k_2,N_2-1},  
  \]
where $M_{\rho_1,\rho_2} (f) = \max_{z \in \mathcal{E}_{\rho_1,\rho_2}} |f(z)|$. 
 Moreover, we have 
\[
\norminfmunpund{ f - P_{N_1-1,N_2-1} } \le \frac{16 \rho_1 \rho_2 M_{\rho_1,\rho_2}(f)}{(\rho_1 -1)(\rho_2 -1)} \left (\frac{1}{\rho_1^{N_1} } + \frac{1}{\rho_2^{N_2} }\right ).
\]
\end{proposition}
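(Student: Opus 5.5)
The plan is to follow the one-variable argument (Proposition~\ref{prop:ineqcauchy_app}) in each variable, starting from the Chebyshev series of $f$ and its coefficient bounds from Proposition~\ref{prop:ineqcauchy2D_app}. Write $f(x,y) = \sumprime_{n_1\ge 0}\sumprime_{n_2\ge 0} a_{n_1,n_2} T_{n_1}(x)T_{n_2}(y)$. This series converges absolutely and uniformly on $[-1,1]^2$, since \eqref{eq:cauchyL22D} gives geometric decay.

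\textbf{Aliasing step.} Interpolation at the tensor grid of Chebyshev nodes is the tensor product of the two univariate interpolation operators, and it is linear. By Proposition~\ref{prop:aliasing} applied in each variable separately, on the grid each $T_{n_1}(x)T_{n_2}(y)$ coincides with $\pm T_{m_1}(x)T_{m_2}(y)$, where $m_i<N_i$ are the aliased indices. Summing termwise (legitimate by absolute convergence) gives the two-dimensional analogue of \eqref{eq:aliasing}:
\[
c_{k_1,k_2}=\sum_{\sigma_1}\sum_{\sigma_2}\pm a_{\sigma_1,\sigma_2},
\]
where $\sigma_i$ ranges over $k_i,\ 2jN_i\pm k_i$ for $j\ge1$. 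The half-weight conventions at index $0$ are harmless for $k_i\ge1$.

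\textbf{Coefficient bound.} Insert \eqref{eq:cauchyL22D}. The resulting double sum factorizes as
\[
4M_{\rho_1,\rho_2}(f)\,\prod_{i=1,2}\Bigl(\rho_i^{-k_i}+\sum_{j\ge1}\bigl(\rho_i^{-(2jN_i-k_i)}+\rho_i^{-(2jN_i+k_i)}\bigr)\Bigr).
\]
Exactly as in the univariate computation, each factor equals $\rho_i^{-k_i}\gamma_{\rho_i,k_i,N_i-1}$, which gives the first claim.

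\textbf{Remainder bound.} Fix $(x,y)\in[-1,1]^2$. Then $f-P_{N_1-1,N_2-1}$ is the sum over all $(n_1,n_2)$ with $n_1\ge N_1$ or $n_2\ge N_2$ of $a_{n_1,n_2}\bigl(T_{n_1}(x)T_{n_2}(y)\mp T_{m_1}(x)T_{m_2}(y)\bigr)$, because terms with $n_1<N_1$ and $n_2<N_2$ are reproduced exactly. Each bracket is at most $2$ in absolute value. Hence
\[
|f-P|\le 2\Bigl(\sum_{n_1\ge N_1}\sum_{n_2\ge0}+\sum_{n_1<N_1}\sum_{n_2\ge N_2}\Bigr)|a_{n_1,n_2}|.
\]
Bounding by \eqref{eq:cauchyL22D} gives at most
\[
8M\Bigl(\frac{\rho_1^{-N_1}}{1-\rho_1^{-1}}\cdot\frac{1}{1-\rho_2^{-1}}+\frac{1}{1-\rho_1^{-1}}\cdot\frac{\rho_2^{-N_2}}{1-\rho_2^{-1}}\Bigr)=\frac{8\rho_1\rho_2M}{(\rho_1-1)(\rho_2-1)}\bigl(\rho_1^{-N_1}+\rho_2^{-N_2}\bigr),
\]
which is even better than the stated constant $16$. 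The extra factor $2$ leaves room for the primed-sum halving at index $0$ and similar conventions.

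\textbf{Main obstacle.} The delicate point is the aliasing step: justifying termwise interpolation of the double series, and tracking the half-weights at $k_i=0$ together with the signs. Only $k_i\ge1$ is claimed, which avoids the worst of it. I would handle it by noting that interpolation is a continuous linear map from $C([-1,1]^2)$ with the sup norm to coefficient vectors, so it commutes with uniformly convergent sums.
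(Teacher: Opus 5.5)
Your proposal is correct and follows the paper's proof. It uses the two-dimensional aliasing formula \eqref{eq:aliasing2D} combined with the Cauchy bounds \eqref{eq:cauchyL22D}, which factorize into $\gamma_{\rho_1,k_1,N_1-1}\gamma_{\rho_2,k_2,N_2-1}$ for $k_1,k_2\ge 1$, and then bounds the aliased tail termwise to get the remainder estimate. Your remainder bookkeeping is slightly cleaner than the paper's product-of-differences expression. You bound each bracket $T_{n_1}T_{n_2}$ minus its interpolant by $2$; note that the interpolant is $0$ rather than $\pm T_{m_1}T_{m_2}$ with $m_i<N_i$ when some $n_i$ is an odd multiple of $N_i$, but the bound still holds. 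This yields the constant $8$, which implies the stated $16$.
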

\begin{proof}
Let $\sumprime_{n_1 \ge 0} \sumprime_{n_2 \ge 0}  a_{n_1,n_2} T_{n_1}(x) T_{n_2}(y)$ the series expansion of $f$, 
 the aliasing phenomenon presented above still exists: for $k_1 = 0, \ldots, N_1 -1$, $k_2 = 0, \ldots, N_2 -1$,
\begin{multline}\label{eq:aliasing2D}
  c_{k_1,k_2} = \sum_{p_1=0}^{+ \infty} \sum_{p_2=0}^{+ \infty}(-1)^{p_1+p_2} a_{2p_1N_1+k_1,2p_2N_2+k_2} \\ +  \sum_{p_1=0}^{+ \infty} \sum_{p_2=1}^{+ \infty}(-1)^{p_1+p_2} a_{2p_1N_1+k_1,2p_2N_2-k_2} + \sum_{p_1=1}^{+ \infty} \sum_{p_2=0}^{+ \infty}(-1)^{p_1+p_2} a_{2p_1N_1-k_1,2p_2N_2+k_2} \\ + \sum_{p_1=1}^{+ \infty} \sum_{p_2=1}^{+ \infty}(-1)^{p_1+p_2} a_{2p_1N_1-k_1,2p_2N_2-k_2}.
\end{multline}
We now combine Equation~\eqref{eq:aliasing2D} and Inequalities~\eqref{eq:cauchyL22D} to obtain, for  $k_1 = 1, \ldots, N_1 -1, k_2 = 1, \ldots, N_2 -1$,
\begin{align*}
\left | c_{k_1,k_2} \right | \le &\, \sum_{p_1=0}^{+ \infty} \sum_{p_2=0}^{+ \infty} | a_{2p_1N_1+k_1,2p_2N_2+k_2} | +  \sum_{p_1=0}^{+ \infty} \sum_{p_2=1}^{+ \infty} | a_{2p_1N_1+k_1,2p_2N_2-k_2}|  \\
                                &\,  + \sum_{p_1=1}^{+ \infty} \sum_{p_2=0}^{+ \infty} |a_{2p_1N_1-k_1,2p_2N_2+k_2}| + \sum_{p_1=1}^{+ \infty} \sum_{p_2=1}^{+ \infty} | a_{2p_1N_1-k_1,2p_2N_2-k_2} | \\
                              \le &\, \sum_{p_1=0}^{+ \infty} \sum_{p_2=0}^{+ \infty} 4 \frac{M_{\rho_1,\rho_2} (f)}{\rho_1^{2p_1N_1+k_1}\rho_2^{2p_2N_2+k_2}} +  \sum_{p_1=0}^{+ \infty} \sum_{p_2=1}^{+ \infty}  4 \frac{M_{\rho_1,\rho_2} (f)}{\rho_1^{2p_1N_1+k_1}\rho_2^{2p_2N_2-k_2}}   \\
                              &\,  + \sum_{p_1=1}^{+ \infty} \sum_{p_2=0}^{+ \infty} 4 \frac{M_{\rho_1,\rho_2} (f)}{\rho_1^{2p_1N_1-k_1}\rho_2^{2p_2N_2+k_2}} + \sum_{p_1=1}^{+ \infty} \sum_{p_2=1}^{+ \infty}  4 \frac{M_{\rho_1,\rho_2} (f)}{\rho_1^{2p_1N_1-k_1}\rho_2^{2p_2N_2-k_2}}  \\
                                \le &\,   4 \frac{M_{\rho_1,\rho_2} (f)}{\rho_1^{k_1}\rho_2^{k_2}}  \frac{1}{1-\rho_1^{-2N_1}}  \frac{1}{1-\rho_2^{-2N_2}} \\ & \left ( 1 +  \frac{1}{\rho_1^{2(N_1-k_1)}} +  \frac{1}{\rho_2^{2(N_2-k_2)}} +  \frac{1}{\rho_1^{2(N_1-k_1)}\rho_2^{2(N_2-k_2)}}  \right). 
\end{align*}
If we use Equation~\eqref{eq:coeffs-interpol2D}, we get
\begin{align*}
\left  |c_{0,0} \right| & \le 4 M_{\rho_1,\rho_2} (f) \textrm{ thanks to the maximum principle,} \\
\left  |c_{k_1,0} \right| & \le 4 \frac{M_{\rho_1,\rho_2} (f)}{\rho_1^{k_1}} \frac{1}{1-\rho_1^{-2N_1}} \left (  1 + \frac{1}{\rho_1^{2(N_1-k_1)}} \right) \textrm{ for } k_1 = 1, \ldots, N_1 -1, \\
\left  |c_{0,k_2} \right| & \le 4 \frac{M_{\rho_1,\rho_2} (f)}{\rho_2^{k_2}}  \frac{1}{1-\rho_2^{-2N_2}} \left (  1 + \frac{1}{\rho_2^{2(N_2-k_2)}} \right) \textrm{ for } k_2 = 1, \ldots, N_2 -1.
\end{align*}
The last two inequalities are consequences of Proposition~\ref{prop:ineqcauchy_app}. 

  As for the remainder, for all $x, y \in [-1,1]$, for all $n_1$  and $n_2 \in \NN$, we have thanks to the aliasing phenomenon
\begin{multline*}  
  f(x,y) - \sumprime_{k_1 = 0}^{N_1-1} \sumprime_{k_2 = 0}^{N_2-1}  c_{k_1,k_2} T_{k_1}(x) T_{k_2}(y)  \\ =  \sum_{k_1 \ge  N_1} \sumprime_{k_2 \ge 0}  a_{k_1,k_2} ( T_{k_1}(x) - (-1)^{p_1} T_{m_1} (x)) ( T_{k_2}(y) -  (-1)^{p_2} T_{m_2} (y)) \\ +
  \sumprime_{k_1 = 0}^{ N_1 -1} \sum_{k_2 \ge  N_2}  a_{k_1,k_2} ( T_{k_1}(x) - (-1)^{p_1} T_{m_1} (x)) ( T_{k_2}(y) -  (-1)^{p_2} T_{m_2} (y)), 
\end{multline*}
where $m_1, m_2$ and $p_1, p_2$ are defined as in Proposition~\ref{prop:aliasing}. Hence, 
\begin{multline*}  
\norminfmunpund{f(x,y) - P_{N_1 -1,N_2-1} (x,y)}  = \sum_{k_1 \ge  N_1} \sumprime_{k_2 \ge 0} 4 |a_{k_1,k_2}| \\ +   \sumprime_{k_1 = 0}^{ N_1 -1 } \sum_{k_2 \ge  N_2} 4 |a_{k_1,k_2}| 
   \le  16   M_{\rho_1,\rho_2} (f) \left ( \frac{1}{\rho_1^{N_1}} + \frac{1}{\rho_2^{N_2} } \right ) \frac{\rho_1 \rho_2}{(\rho_1 -1)(\rho_2 -1)},
  \end{multline*}
thanks to Proposition~\ref{prop:ineqcauchy2D_app}.
\end{proof}
The next lemma eases the computations. Recall that we introduce in Section~\ref{subsec:unifapprox} $\eta_{\rho,0} = 1$ and $\eta_{\rho,k} =  (\rho^2+1)(\rho^{2}-1)$ for $k = 1, \ldots, N-1$. 
\begin{lemma} \label{lem:simpligamma} Let $\rho > 1$, $N \ge 2$, for $k =0, \ldots, N-1$, we have
  \[
  \gamma_{\rho,k,N-1} \le \eta_{\rho,k}. 
  \]
In particular, if $\rho \ge 2$,  $\eta_{\rho,k} \le 2$ for $k =1, \ldots, N-1$.
\end{lemma}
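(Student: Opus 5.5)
The plan is to treat $k=0$ and $1\le k\le N-1$ separately, reduce the latter to an elementary inequality in $r=\rho^{-2}\in(0,1)$, and then compare with the product $\eta_{\rho,k}=(\rho^2+1)(\rho^2-1)$ exactly as recalled in the statement. For $k=0$ there is nothing to prove, since $\gamma_{\rho,0,N-1}=1=\eta_{\rho,0}$ by definition.

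For $1\le k\le N-1$, write $r=\rho^{-2}$. Then
\[
\gamma_{\rho,k,N-1}=\frac{1+r^{N-k}}{1-r^{N}}.
\]
\begin{enumerate}
\item Since $N-k\ge 1$ and $0<r<1$, the numerator satisfies $1+r^{N-k}\le 1+r$.
\item Since $N\ge 2$, the denominator satisfies $1-r^{N}\ge 1-r^{2}$.
\item Hence
\[
\gamma_{\rho,k,N-1}\le \frac{1+r}{1-r^2}=\frac{1}{1-r}=\frac{\rho^2}{\rho^2-1}\le\frac{\rho^2+1}{\rho^2-1}.
\]
\end{enumerate}
This gives the intrinsic bound $\frac{\rho^2+1}{\rho^2-1}$, which is the quantity used in Section~\ref{subsec:unifapprox}.

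To reach the product $(\rho^2+1)(\rho^2-1)$ as written, I would use
\[
\frac{\rho^2+1}{\rho^2-1}\le(\rho^2+1)(\rho^2-1)\iff(\rho^2-1)^2\ge 1\iff\rho^2\ge 2.
\]
So for $\rho\ge\sqrt2$ the claimed inequality $\gamma_{\rho,k,N-1}\le\eta_{\rho,k}$ follows from the chain above. In the only regime the paper ever uses, $\rho\ge 2$, this gives $\gamma_{\rho,k,N-1}\le \frac{\rho^2}{\rho^2-1}\le\frac43$.

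The main obstacle is that, read literally as a product, the lemma cannot hold as stated for all $\rho>1$. As $\rho\to1^+$ we have $(\rho^2+1)(\rho^2-1)\to 0$, while $\gamma_{\rho,k,N-1}\ge 1$; for instance $N=2$, $k=1$, $\rho^2=1.1$ gives $\gamma\approx 10.5$ against $\eta\approx 0.21$. The same issue affects the ``in particular'' clause: for $\rho\ge2$ the product is at least $15$, so it is not bounded by $2$.

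What I would actually prove is therefore the following.
\begin{itemize}
\item The bound $\gamma_{\rho,k,N-1}\le\frac{\rho^2}{\rho^2-1}$ holds for every $\rho>1$.
\item The bound $\gamma_{\rho,k,N-1}\le(\rho^2+1)(\rho^2-1)$ holds for every $\rho\ge\sqrt2$, and hence throughout the regime $\rho\ge2$ assumed in Section~\ref{sec:two-var}.
\item The numerical conclusion needed later, $\gamma_{\rho,k,N-1}\le 2$ when $\rho\ge2$, holds because $\frac{\rho^2}{\rho^2-1}\le\frac43\le 2$.
\end{itemize}
This last bound is what feeds into Proposition~\ref{prop:ineqcauchy2D} through Proposition~\ref{prop:ineqcauchy2D_int_app}.
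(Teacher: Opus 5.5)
Your argument is correct and is essentially the paper's: both bound the numerator using $N-k\ge 1$ and then control $1/(1-\rho^{-2N})$ by monotonicity in $N$. The one difference is that you use $N\ge 2$ (so $1-r^N\ge 1-r^2$) where the paper only uses $\rho^{2N}\ge\rho^2$, which gives you the slightly sharper $\rho^2/(\rho^2-1)$ before relaxing to $\eta_{\rho,k}$. You are also right that the product in the appendix's ``recall'' is a typo for the quotient $(\rho^2+1)/(\rho^2-1)$ of Section~\ref{subsec:unifapprox}; the paper's own proof ends exactly at that quotient, and under this reading the ``in particular'' clause just amounts to $\rho^2\ge 3$. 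One small numerical slip: in your counterexample $\gamma_{\rho,1,1}=\rho^2/(\rho^2-1)=11$ exactly, not $\approx 10.5$.
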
  
\begin{proof}
The case $k=0$ is straightforward. For $k = 1, \ldots, N-1$, we have
\[
\gamma_{\rho,k,N-1} =  \frac{1}{1-\rho^{-2N}} \left (1 + \frac{1}{\rho^{2(N-k)}} \right )  =  \frac{1}{\rho^{2N}-1} \left (\rho^{2N} + \rho^{2k}  \right ) \le \frac{\rho^{2N}}{\rho^{2N}-1} \left (1 + \rho^{-2}  \right ).
\]
The function $u \mapsto u/(u-1)$ is strictly decreasing over $(1,+\infty)$, hence
\[
\gamma_{\rho,k,N-1} \le  \frac{\rho^{2N}}{\rho^{2N}-1} \left (1 + \rho^{-2}  \right ) \le  \frac{\rho^{2}}{\rho^{2}-1} \left (1 + \rho^{-2}  \right ) = \frac{\rho^2 + 1}{\rho^2 - 1}.
\]
The last statement is obvious.
\end{proof}

\textit{Proof of  Proposition~\ref{prop:ineqcauchy2D}}. We introduce
\[
  \widehat{f}: \widehat{f}(z_1,z_2) = f \left ( z_1 \frac{b_1-a_1}{2} + \frac{a_1+b_1}{2}, z_2 \frac{b_2-a_2}{2} + \frac{a_2+b_2}{2} \right )
\]
  for any $(z_1,z_2)$ in a suitable neighborhood of $E_{\rho_1,\rho_2}$.  The coefficients $c_{k_1,k_2}$ are also the coefficients of the interpolation polynomial in $\ree_{N_1-1,N_2-1} [x,y]$ of $\widehat{f}$ at pairs of Chebyshev nodes of the first kind.  Hence, it suffices to apply Proposition~\ref{prop:ineqcauchy2D_int_app} to  $\widehat{f}$ and Lemma~\ref{lem:simpligamma}. \qed

Now we study the behavior of Chebyshev approximation outside of its
range of validity (namely, $[-1, 1]$ in our notations).
\begin{lemma} \label{lem:majTn}
  For $x\in \comp$, $|x| \ge 1$, $n\ge 0$, we have
  \[
  |T_n(x)| \le 2 \left (|x| + \sqrt{x^2-1} \right )^n.
  \]
\end{lemma}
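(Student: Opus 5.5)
The plan is to use the Joukowski parametrization. For $x\in\comp$, write $x = (z+z^{-1})/2$ with $z\in\comp\setminus\{0\}$. Both roots $z$ of $z^2-2xz+1=0$ work, namely $z = x\pm\sqrt{x^2-1}$, and their product is $1$. We choose the root $z$ with $|z|\ge 1$. The classical identity $T_n\bigl((z+z^{-1})/2\bigr) = (z^n+z^{-n})/2$ holds for all $z\neq 0$. To see this, note that both sides satisfy the recurrence $T_{n+2}=2xT_{n+1}-T_n$ with $T_0=1$, $T_1=x$; on the right-hand side this is the identity $(z+z^{-1})(z^{n+1}+z^{-n-1}) = z^{n+2}+z^{-n-2}+z^n+z^{-n}$. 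Since $|z|\ge1$, this gives
\[
|T_n(x)| \le \frac{|z|^n+|z|^{-n}}{2}\le |z|^n .
\]

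It remains to bound $|z|$ by $|x|+|\sqrt{x^2-1}|$; the latter quantity, read with moduli, is how the right-hand side of the statement should be interpreted for complex $x$. Whichever sign of the square root gives $z$, the triangle inequality yields $|z| = |x\pm\sqrt{x^2-1}|\le |x|+|\sqrt{x^2-1}|$. Hence $|T_n(x)|\le (|x|+|\sqrt{x^2-1}|)^n$, and this is even stronger than the statement by a factor $2$. The factor $2$ leaves room for a cruder argument, for instance bounding $(|z|^n+|z|^{-n})/2$ by $|z|^n+1\le 2|z|^n$ without first selecting the root with $|z|\ge1$. For real $x\ge1$ the right-hand side is literally $(x+\sqrt{x^2-1})^n$, matching the use made in Section~\ref{subsec:anatrick}, where $|T_n(x)|=\cosh(n\,\mathrm{arccosh}\,x)$.

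I expect no real obstacle. The only point requiring care is the interpretation of $\sqrt{x^2-1}$ for complex $x$: we read the bound with $|\sqrt{x^2-1}|$, which is independent of the branch. We should also check that the hypothesis $|x|\ge1$ is not even needed, since the triangle-inequality bound holds for all $x$. Alternatively, one can bound $|z|\le |x|+\sqrt{|x|^2+1}$ and invoke the hypothesis $|x|\ge1$ if a branch-free right-hand side is desired, but I would state the modulus version as the main argument.
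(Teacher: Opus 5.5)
Your argument is correct, and it takes a genuinely different route from the paper's.

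The paper stays on the real line. It writes $|x| = \cosh y$ with $y = \log(|x| + \sqrt{x^2-1})$ and uses $T_n(\cosh y) = \cosh(ny)$, which bounds $T_n(|x|)$. Passing from $T_n(|x|)$ to $|T_n(x)|$ is legitimate for real $x$ by parity, $T_n(-x) = (-1)^n T_n(x)$. Real $x \in [1,3]$ is also the only case used later (Section~\ref{subsec:anatrick}, Lemma~\ref{le:cheb_extrapol}). The step is not valid for genuinely complex $x$. For example, $|T_2(i)| = 3$ while $T_2(1) = 1$, so the real bound at $|x| = 1$, namely $2\bigl(1 + \sqrt{1^2 - 1}\bigr)^2 = 2$, is violated.

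Your Joukowski parametrization $x = (z + z^{-1})/2$ with $|z| \ge 1$ covers every complex $x$. It also removes the factor $2$ and the hypothesis $|x| \ge 1$. Your reading of the base as $|x| + |\sqrt{x^2-1}|$ is the right one: on the imaginary axis it equals $|z|$ exactly, so it cannot be lowered to $|x| + \sqrt{|x|^2-1}$. The paper's computation is shorter and suffices for its applications; yours actually matches the stated generality.

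There is one slip in your aside. The bound $(|z|^n + |z|^{-n})/2 \le |z|^n + 1 \le 2|z|^n$ requires $|z| \ge 1$, so it does not work ``without first selecting the root''. A root-free variant does exist. Note that $|z|^{-n} = |1/z|^n$, and $1/z = x \mp \sqrt{x^2-1}$ is the other root. By the triangle inequality, both $|z|$ and $|1/z|$ are at most $|x| + |\sqrt{x^2-1}|$.
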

\begin{proof}
  We write $|x| = \cosh(y)$ for $y  = \log(|x| + \sqrt{x^2-1})$. 
  As $T_{n}(\cosh(y)) = \cosh(ny)$, we have
  $T_n(|x|) = (|x| + \sqrt{x^2-1})^n + (|x| + \sqrt{x^2-1})^{-n}
  \le 2 (|x| + \sqrt{x^2-1})^n.$
\end{proof}

Note that $E_\rho$ can also be defined as $\{ z \in \comp: | z + \sqrt{z^2- 1}| \leq \rho  \}$. As a corollary of Lemma~\ref{lem:majTn}, we see that for all $x \geq 1$, $x + \sqrt{x^2-1} < \rho$,
the series $S(x) = \sum_{n\ge 0} a_n T_n(x)$ is convergent;
as $f = S$ over $[-1, 1]$, we see that $f = S$ over the domain of
convergence of $S$, which contains $E_\rho$, so at least on $[-1, x]$.

Thus, for all $x$ such that $x \geq 1, x + \sqrt{x^2-1} < \rho$, following the
proof of Proposition~\ref{prop:ineqcauchy_app}, the inequality
\[
|f(x) - p_{N-1}(x)| \le \frac{4M_{\rho}(f)}{\rho^{N-1}(\rho -1)}
\]
becomes
\[
|f(x) - p_{N-1}(x)| \le \frac{8M_{\rho}(f)\rho}{\rho - 1}
\left(\frac{x + \sqrt{x^2 - 1}}{\rho}\right)^{N-1} \le 16 M_{\rho}(f)
\left(\frac{x + \sqrt{x^2 - 1}}{\rho}\right)^{N-1}.
\]

In particular, for $x = 3$ (recall $\rho_1 \ge 6$), we find
\[
|f(x) - p_{N-1}(x)| \le 16 M_{\rho}(f) \left(\frac{3 +
  2\sqrt{2}}{\rho}\right)^{N-1}.
\]

These statements extend to the case of bivariate functions under the
following form; the meaning of this Lemma is that the quality
of the Chebyshev interpolation over $[-1, 1]$ as an approximation
over a larger interval degrades, but not too quickly. 
\begin{lemma}\label{le:cheb_extrapol}
Let $P_{N_1-1, N_2-1}$ be the bivariate interpolant of order $(N_1-1, N_2 -1)$ of
$f$ over $[-1, 1]\times [-1, 1]$. Then,  for $x \in \ree,  1 \leq x  \leq 3$, $x + \sqrt{x^2-1}< \rho_1$, 
we have
\[
  \left|f(x,y) - P_{N_1-1, N_2-1} (x,y)
\right| \le 128 M_{\rho_1, \rho_2}(f) \left(
\left(\frac{3 + 2\sqrt{2}}{\rho_1}\right)^{N_1-1} +
\frac{1}{\rho_2^{N_2-1}} \right).
\]
\end{lemma}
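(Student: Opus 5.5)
We mimic the univariate extrapolation argument given just above, now in two variables, starting from the Chebyshev series of $f$ and the aliasing formula for the interpolant. Write $f(x,y)=\sumprime_{n_1\ge0}\sumprime_{n_2\ge0}a_{n_1,n_2}T_{n_1}(x)T_{n_2}(y)$. By Proposition~\ref{prop:ineqcauchy2D_app} we have $|a_{n_1,n_2}|\le 4M_{\rho_1,\rho_2}(f)\rho_1^{-n_1}\rho_2^{-n_2}$.

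For real $x\in[1,3]$ with $r:=x+\sqrt{x^2-1}<\rho_1$, Lemma~\ref{lem:majTn} gives $|T_n(x)|\le 2r^n$. Hence the double series converges absolutely at $(x,y)$ for $y\in[-1,1]$, because $\sum_{n_1}r^{n_1}\rho_1^{-n_1}$ converges. As in the univariate remark, it equals $f(x,y)$: for fixed $y$, the function $x\mapsto f(x,y)$ is analytic on $E_{\rho_1}$, and the series in $x$ converges there and agrees with $f$ on $[-1,1]$. Thus the aliasing identity used in the proof of Proposition~\ref{prop:ineqcauchy2D_int_app} still holds at such $(x,y)$:
\[
f-P_{N_1-1,N_2-1}=\sum_{(k_1,k_2)\notin[0,N_1-1]\times[0,N_2-1]} a_{k_1,k_2}\bigl(T_{k_1}(x)T_{k_2}(y)-(-1)^{p_1+p_2}T_{m_1}(x)T_{m_2}(y)\bigr),
\]
where $m_i<N_i$ and $p_i$ are as in Proposition~\ref{prop:aliasing}. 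The only change from the earlier proof is that we no longer have $|T_{m_1}(x)|\le1$; instead $|T_{m_1}(x)|\le 2r^{m_1}\le 2r^{k_1}$.

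Next we split the sum into two families and bound the bracket in each.

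\emph{Terms with $k_1\ge N_1$.} The bracket is at most $4r^{k_1}$. Summing $4\cdot4M\,(r/\rho_1)^{k_1}\rho_2^{-k_2}$ over $k_1\ge N_1$ and $k_2\ge0$ gives a bound of the form
\[
C\,M\Bigl(\frac{r}{\rho_1}\Bigr)^{N_1}\frac{1}{1-r/\rho_1}\cdot\frac{\rho_2}{\rho_2-1}.
\]

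\emph{Terms with $k_1<N_1$ and $k_2\ge N_2$.} Here $|T_{k_1}(x)|\le 2r^{k_1}$, and we get a bound of the form
\[
C\,M\,\rho_2^{-N_2}\sum_{k_1<N_1}\Bigl(\frac{r}{\rho_1}\Bigr)^{k_1}\le C'M\rho_2^{-N_2},
\]
since $r<\rho_1$.

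It remains to turn these into the stated constants. Using $r\le 3+2\sqrt2$, the first family is bounded by a constant times $M\bigl((3+2\sqrt2)/\rho_1\bigr)^{N_1-1}$, provided the factor $\frac{r/\rho_1}{1-r/\rho_1}$ is controlled. For the second family we trade one power, writing $\rho_2^{-N_2}\le\rho_2^{-(N_2-1)}/2$. The constant $128$ should absorb $4\cdot4\cdot2$ together with these geometric factors.

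The main obstacle is exactly that geometric factor $1/(1-r/\rho_1)$, which blows up as $r\to\rho_1$. The lemma only assumes $r<\rho_1$ and $\rho_2\ge2$, so the first thing I would try is to mirror the univariate computation above. There the $1/(\rho-1)$ form was kept and $\rho/(\rho-1)\le2$ was used, keeping one power $(r/\rho_1)^{N_1-1}$ separate. In the intended application $\rho_1\ge6$, so $r/\rho_1\le(3+2\sqrt2)/6<1$ is bounded away from $1$, and I would use that to absorb the factor. If the constant still does not close, I would restate the bound with the explicit factor $\rho_1/(\rho_1-r)$ and check that it is at most $2$ under $\rho_1\ge 2(3+2\sqrt2)$, adjusting the hypothesis accordingly.
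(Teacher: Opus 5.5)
Your setup (series representation at $x>1$, aliasing identity, the bound $|T_{m_1}(x)|\le 2r^{k_1}$) is the paper's own route. The paper's entire proof is the remark that the lemma ``follows similarly from the convergence of the Chebyshev series'', i.e.\ the univariate extrapolation computation run in two variables. But the step you flag is a genuine gap, and your proposal does not close it for the lemma as stated. Write $q=r/\rho_1$. Termwise summation gives $16M\frac{q^{N_1}}{1-q}\frac{\rho_2}{\rho_2-1}\le 32M\,q^{N_1-1}\frac{q}{1-q}$ for the first family. For the second family it gives $16M\rho_2^{-N_2}\frac{\rho_2}{\rho_2-1}\sum_{k_1<N_1}q^{k_1}$. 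So your claim ``$\le C'M\rho_2^{-N_2}$ since $r<\rho_1$'' is not uniform either: that sum is of order $\min(N_1,1/(1-q))$. Absorbing these factors via $\rho_1\ge 6$ also fails. At $x=3$, $\rho_1=6$ one has $\rho_1-r=3-2\sqrt{2}=1/r$, so $q/(1-q)=r^2=17+12\sqrt{2}\approx 34$, against your budget of $128/32=4$. The first family alone is then only bounded by about $1.1\cdot 10^3\,M\,q^{N_1-1}$. Your fallback does work: if $\rho_1\ge 2(3+2\sqrt{2})$ then $q\le 1/2$ and both geometric factors are at most $2$. With $\rho_2\ge 2$, each family is then at most $32M$ times the corresponding term of the right-hand side. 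But that proves a lemma with an extra hypothesis, not the stated one.

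The defect is in the paper as well, not only in your write-up. Its univariate step replaces the termwise bound $8M\sum_{k\ge N}(r/\rho)^k=8M(r/\rho)^{N-1}\frac{r}{\rho-r}$ by $\frac{8M\rho}{\rho-1}(r/\rho)^{N-1}$. That replacement requires $r(2\rho-1)\le\rho^2$, which fails for $r=3+2\sqrt{2}$, $\rho=6$. Moreover, some hypothesis keeping $r/\rho_1$ away from $1$ is unavoidable. Take $x=3$ and $f$ independent of $y$, so that $P_{N_1-1,N_2-1}(x,y)=p_{N_1-1}(x)$. For every $\rho_1>3+2\sqrt{2}$ and $\rho_2\ge 2$, the lemma would give $|p_{N_1-1}(3)|\le 193M$ for all $N_1$. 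Now let $f\bigl(\frac{w+w^{-1}}{2}\bigr)=B(w/\rho_1)+B(1/(w\rho_1))-B(0)$. Here $B$ is analytic on a neighborhood of the closed unit disc with $|B|\le 1$ there, and $\bigl|\sum_{k<N_1}\hat{B}_k\bigr|$ is close to Landau's constant $G_{N_1-1}\sim\frac{1}{\pi}\log N_1$. Then $M\le 3$ and the Chebyshev coefficients are $a_k=2\hat{B}_k\rho_1^{-k}$. By the aliasing formula, $|p_{N_1-1}(3)|$ tends to a value at least $G_{N_1-1}-1$ as $\rho_1\downarrow 3+2\sqrt{2}$. So the statement as written is false for (astronomically) large $N_1$. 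A correct version needs a hypothesis like yours, or a factor depending on $\rho_1/(\rho_1-r)$. Even under the paper's standing assumption $\rho_1\ge 6$, getting the constant $128$ would require estimating the tail as a whole rather than term by term.
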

This lemma follows similarly from the convergence of the Chebyshev
series
\[
\sum_{k_1, k_2} a_{k_1, k_2} T_{k_1}(x) T_{k_2}(y),
\]
obtained in the same way.

  \section{Some technical lemmata for the estimate of volumes}
  \label{app:phipsi}

We start with a convenient combinatorial lemma. 
\begin{lemma}\label{lem:bnd_using_ks}
Let $\gamma \in \ree, \gamma \geq 1$, $N, N_1, N_2$ positive integers. Consider the multiset\footnote{By sum of multisets, we mean that the multiplicity of an element of the union is the sum of its multiplicities in the multisets.} $S = \{ k + \gamma k', (k, k')\in [0, N_1 - 1]\times [0, N_2 - 1] \} + \{ \underbrace{N_1,\ldots,N_1}_{N \textrm{times}}\}$, and order the elements of $S$ as $\sigma_0 \leq \dots \leq \sigma_{\card   S-1}$.

Define 
  \[
  \Omega_\gamma(N_1, N_2, N) = \sigma_0 + \dots + \sigma_{N-1},
  \]
  the sum of the $N$ smallest elements of $S$.
  
  Let now $s \leq N_1$ be a real number, let $\K_s =\{ (i, j) \in [0, N_1 - 1] \times [0, N_2 - 1]: i + \gamma j \leq s \}$.   Then,
\[
  \Omega_\gamma(N_1, N_2, N) \geq s (N - \card \K_s) + \sum_{(i,j)\in \K_s} (i + \gamma j).
\]
\end{lemma}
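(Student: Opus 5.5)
We compare the $N$ smallest elements of $S$ termwise with a threshold. Write $\Omega_\gamma(N_1,N_2,N)=\sum_{m=0}^{N-1}\sigma_m$ and split the indices $m$ according to whether $\sigma_m\le s$ or $\sigma_m>s$. The key observation is that the elements of $S$ which are $\le s$ are exactly the values $i+\gamma j$ with $(i,j)\in\K_s$, counted with multiplicity. The $N$ extra copies of $N_1$ can be ignored: since $s\le N_1$, such a copy is $\le s$ only if $s=N_1$, and then it equals $s$, a case that is harmless for the inequality below. We also use that every element of $S$ is nonnegative.

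The basic inequality is
\[
\sum_{m=0}^{N-1}\sigma_m \;\ge\; \sum_{m=0}^{N-1}\min(\sigma_m,s) \;\ge\; \sum_{m=0}^{N-1}\bigl(\sigma_m - (s-\sigma_m)^+\bigr)\ldots
\]
A cleaner route is the identity $\sigma \ge s - (s-\sigma)^+$, which gives
\[
\Omega_\gamma \ge Ns - \sum_{m<N}(s-\sigma_m)^+ .
\]
Since $(s-\sigma)^+\ge 0$, and the $\sigma_m$ with $m<N$ are the smallest elements of $S$, the sum $\sum_{m<N}(s-\sigma_m)^+$ is at most the sum of $(s-\sigma)^+$ over all of $S$. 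Over $S$, this sum is at most $\sum_{(i,j)\in\K_s}(s-i-\gamma j)$: the extra copies of $N_1$ contribute $0$ because $N_1\ge s$, and the grid elements outside $\K_s$ contribute $0$ by definition of $\K_s$. Therefore
\[
\Omega_\gamma\ge Ns-\sum_{(i,j)\in\K_s}(s-i-\gamma j)= s(N-\card\K_s)+\sum_{(i,j)\in\K_s}(i+\gamma j),
\]
which is exactly the claim.

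The only point needing care is the bookkeeping of multiplicities, since $S$ is a multiset and distinct pairs $(i,j)$ may give the same value when $\gamma$ is rational. I would therefore phrase the whole argument as sums over the index set $[0,N_1-1]\times[0,N_2-1]$ together with $N$ labelled extra copies, rather than over values. The remaining steps are the elementary pointwise inequality $\sigma\ge s-(s-\sigma)^+$ and the monotone extension of a sum of nonnegative terms from the first $N$ elements to all of $S$. Neither needs anything stronger: the argument does not require $N\ge\card\K_s$, because if $N<\card\K_s$ the right-hand side of the claim only gets smaller.
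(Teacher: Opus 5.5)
Your proof is correct, and it takes a different route from the paper's.

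\textbf{The paper's route.} The paper splits into two cases.
If $\card\mathcal{K}_s\le N$, the values $i+\gamma j$ with $(i,j)\in\mathcal{K}_s$ all occur among $\sigma_0,\dots,\sigma_{N-1}$, and the remaining $N-\card\mathcal{K}_s$ of these are $\ge s$.
If $\card\mathcal{K}_s>N$, then $\sigma_0,\dots,\sigma_{N-1}$ all come from $\mathcal{K}_s$, and the $\card\mathcal{K}_s-N$ leftover values are $\le s$.

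\textbf{Your route.} You use a single chain:
\[
\sum_{m<N}\sigma_m\ \ge\ \sum_{m<N}\min(\sigma_m,s)\ =\ Ns-\sum_{m<N}(s-\sigma_m)^+\ \ge\ Ns-\sum_{\sigma\in S}(s-\sigma)^+ .
\]
The last sum is exactly $\sum_{(i,j)\in\mathcal{K}_s}(s-i-\gamma j)$; you wrote ``at most'', but it is an equality.

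\textbf{What each approach buys.} Your chain never uses that $\sigma_0,\dots,\sigma_{N-1}$ are the \emph{smallest} elements. It bounds the sum of any $N$ elements of $S$, which is equivalent because the $N$ smallest minimise that sum. That is why you need no comparison between $N$ and $\card\mathcal{K}_s$. It is also why you never have to decide which of several tied values belong to the $N$ smallest; for instance, copies of $N_1$ tie with grid values equal to $s$ when $s=N_1$. The paper's multiset inclusions handle such ties only implicitly, by choosing the ordering suitably. In exchange, the paper's case analysis is more descriptive: it says which elements the $N$ smallest actually are, and hence when the bound is tight.

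\textbf{Minor cleanup.}
\begin{itemize}
\item Delete the aborted first display. Its last inequality, $\min(\sigma_m,s)\ge\sigma_m-(s-\sigma_m)^+$, is false when $\sigma_m>s$.
\item Your proof covers $N<\card\mathcal{K}_s$ simply because the chain is uniform, not because the right-hand side ``only gets smaller''.
\item Nonnegativity of the elements of $S$ is never used and can be dropped.
\end{itemize}
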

\begin{proof}
  Put $\M_s = \{ i + j\gamma, (i,j) \in \K_s \}$. Then, if $\card \K_s \leq N$, $ \M_s \subset \{ \sigma_0, \dots,$ $ \sigma_{N-1} \}$, and
any element in $\{ \sigma_0, \dots,$ $ \sigma_{N-1} \} \setminus \M_s $ is at
least equal to $s$.

Otherwise, we have $\{ \sigma_0, \dots, \sigma_{N-1} \} \subset \M_s$, and any element in $\M_s \setminus \{ \sigma_0, \dots, \sigma_{N-1} \}$ is at most equal to $s$. 
\end{proof}
  
We now give explicit expressions for $\card \K_s$ and $\sum_{(i,j)\in\K_s} (i + j \gamma)$.
\begin{lemma}\label{lem:estim_card_sum_Ks}
Let $s \in \ree$, $s < N_1$ and $\gamma \ge N_1/N_2 \ge 1$. We have 
\begin{equation}\label{eq:cardKs}
\card \K_s = \sum_{j=0}^{\lfloor s/\gamma \rfloor} (1 + \lfloor s - j \gamma \rfloor )= ( 1 +  \lfloor s/\gamma \rfloor)  + \sum_{j=0}^{\lfloor s/\gamma \rfloor} \lfloor s - j \gamma \rfloor 
\end{equation}
and
\begin{equation}\label{eq:sommeKs}
\sum_{(i,j)\in\K_s} (i + j \gamma) =   \sum_{j=0}^{\lfloor s/\gamma \rfloor}  ( 1 + \lfloor s - j\gamma   \rfloor ) \left (  j \gamma + \frac{ \lfloor s - j \gamma \rfloor }{2} \right  ).
\end{equation}
This implies
\begin{equation}\label{eq:ineqcardKs}
( 1 +  \lfloor s/\gamma \rfloor) ( s - \gamma  \lfloor s/\gamma \rfloor/2)  \leq \card \K_s \leq    ( 1 +  \lfloor s/\gamma \rfloor) (1 + s - \gamma  \lfloor s/\gamma \rfloor/2) 
\end{equation}
and
\begin{equation}\label{eq:ineqsommeKs}
\begin{multlined}
  (1 + \lfloor s/\gamma \rfloor) \frac{6s(s-1) +  \gamma \lfloor s/\gamma \rfloor  ( 3 - \gamma - 2 \gamma  \lfloor s/\gamma \rfloor)  }{12}   \le \sum_{(i,j)\in\K_s}  (i + j \gamma)  \\
\le    (1 + \lfloor s/\gamma \rfloor) \frac{6s(s+1) +  \gamma \lfloor s/\gamma \rfloor (3 - \gamma - 2 \gamma  \lfloor s/\gamma \rfloor)}{12}.
\end{multlined}
\end{equation}
\end{lemma}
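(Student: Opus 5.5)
We count the lattice points of $\K_s$ column by column in $j$, i.e.\ by fixing the second coordinate. The constraint $i+\gamma j\le s$ with $i\ge 0$ forces $j\le s/\gamma$, so $j$ ranges over $0,\dots,\lfloor s/\gamma\rfloor$. This range stays within $[0,N_2-1]$: since $s<N_1$ and $\gamma\ge N_1/N_2$, we get $s/\gamma<N_2$, hence $\lfloor s/\gamma\rfloor\le N_2-1$. For fixed $j$, the admissible $i$ are the integers $0\le i\le s-j\gamma$, i.e.\ $i=0,\dots,\lfloor s-j\gamma\rfloor$. The constraint $i\le N_1-1$ is automatic because $s-j\gamma\le s<N_1$. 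Hence the column has $1+\lfloor s-j\gamma\rfloor$ points, and summing over $j$ gives \eqref{eq:cardKs}.

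For \eqref{eq:sommeKs}, fix $j$ and sum $i+j\gamma$ over $i=0,\dots,m_j$, where $m_j:=\lfloor s-j\gamma\rfloor$. This gives $(1+m_j)j\gamma+m_j(m_j+1)/2=(1+m_j)(j\gamma+m_j/2)$. Summing over $j$ yields the stated formula.

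For the inequalities, put $J=\lfloor s/\gamma\rfloor$ and use $s-j\gamma-1<m_j\le s-j\gamma$.
\begin{itemize}
\item For \eqref{eq:ineqcardKs}, we have $\sum_{j=0}^J(1+m_j)$ with $1+m_j\in(s-j\gamma,\,1+s-j\gamma]$. Moreover $\sum_{j=0}^J(s-j\gamma)=(1+J)(s-\gamma J/2)$. This gives both bounds, with the lower one taken non-strictly.
\item For \eqref{eq:ineqsommeKs}, write $(1+m_j)(j\gamma+m_j/2)=\tfrac12\bigl((m_j+j\gamma)(m_j+j\gamma+1)-j\gamma(j\gamma+1)\bigr)$ up to rearrangement, or more simply bound it as a function of $m_j$. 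The map $m\mapsto(1+m)(j\gamma+m/2)$ is increasing for $m\ge 0$. So the upper bound follows by substituting $m_j\le s-j\gamma$, which gives $\tfrac12(1+s-j\gamma)(s+j\gamma)=\tfrac12\bigl(s(s+1)-j\gamma(j\gamma-1)\bigr)$ after expansion. Summing over $j$ with $\sum_{j=0}^J j=J(J+1)/2$ and $\sum_{j=0}^J j^2=J(J+1)(2J+1)/6$ gives $(1+J)\bigl(6s(s+1)+\gamma J(3-\gamma-2\gamma J)\bigr)/12$, matching the claim.
\item For the lower bound, substitute $m_j\ge s-j\gamma-1$. When this value is negative, i.e.\ $j=J$ with $s-J\gamma<1$, the substituted expression is nonpositive and still below the true nonnegative term, so the bound remains valid. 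This gives $\tfrac12(s-j\gamma)(s+j\gamma-1)=\tfrac12\bigl(s(s-1)-j\gamma(j\gamma-1)\bigr)$, and the same summation produces the stated lower bound.
\end{itemize}

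The main obstacle is the lower bound in \eqref{eq:ineqsommeKs}. There, monotonicity must be checked on the possibly negative range $m\in[s-j\gamma-1,0)$. The quadratic $(1+m)(j\gamma+m/2)$ has roots $-1$ and $-2j\gamma$. For $m\in[-1,0]$ it is nonnegative when $j\ge1$ and $\gamma\ge 1$, and it stays below the value at $m_j$ by monotonicity on $[-1,\infty)$ when $j\gamma\ge 1/2$. The case $j=0$ must be handled directly: there $m_0=\lfloor s\rfloor\ge s-1$, and the expression $(1+m)m/2$ is increasing for $m\ge -1/2$, with $s-1\ge -1$ and the value at $s-1$ at most $0$ whenever $s-1\in[-1,0]$.
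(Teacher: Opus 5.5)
Your proof is correct and follows the paper's route. You count $\K_s$ slice by slice in $j$, sum each slice as an arithmetic progression, then substitute $s-j\gamma-1\le\lfloor s-j\gamma\rfloor\le s-j\gamma$ and sum using $\sum j$ and $\sum j^2$; your explicit monotonicity check for the lower bound in \eqref{eq:ineqsommeKs} is a step the paper leaves implicit.

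There is one slip. In your third bullet, when $j=J\ge 1$ and $s-J\gamma<1$, the substituted value $\tfrac12(s-J\gamma)(s+J\gamma-1)$ is nonnegative, not nonpositive. (Also, the unused identity in your second bullet should end with $-j\gamma(j\gamma-1)$.) This does not break the proof: your closing paragraph, using monotonicity of $m\mapsto(1+m)(j\gamma+m/2)$ on $[-1,\infty)$ for $j\ge1$ and a direct check for $j=0$, handles that case correctly.
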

\begin{proof}
First, note that $s/\gamma < N_1/\gamma \leq N_1/(N_1/N_2) = N_2$, hence $   \lfloor s/\gamma \rfloor \le N_2 - 1$. Let $(i,j) \in \K_s$, the largest possible value of $j$ corresponds to the case $i =0$: we then have $j\gamma \le s$, that is to say $j \le  \lfloor s/\gamma \rfloor$. Now, in order to count the elements of $K_s$, we enumerate, for $j = 0, \ldots, \lfloor s/\gamma \rfloor$, the elements of each slice $\{ i + j \gamma \le s, i \in [0,N_1-1]\}$: there are $1  + \lfloor s - j \gamma \rfloor$ such elements, which proves~\eqref{eq:cardKs}.
 
Now,  for $j = 0, \ldots, \lfloor s/\gamma \rfloor$, we sum the values $i + j\gamma$ for $i$ in the slice  $\{ i + j \gamma \le s, i \in [0,N_1-1]\}$, i.e., $i \in [0,   \lfloor s - j \gamma \rfloor ]$. Hence,  for $j = 0, \ldots, \lfloor s/\gamma \rfloor$, we sum the values $  \lfloor s - j \gamma \rfloor (1+ \lfloor s - j \gamma \rfloor)/2 +  (1 +  \lfloor s - j \gamma \rfloor)j \gamma $, from which~\eqref{eq:sommeKs} follows.

We use $s - j \gamma - 1 \le \lfloor s - j \gamma \rfloor \le s - j \gamma$ for $j=0,\ldots, \lfloor s/\gamma \rfloor$ to derive from~\eqref{eq:cardKs} 
\[
   ( 1 +  \lfloor s/\gamma \rfloor) ( s - \gamma  \lfloor s/\gamma \rfloor/2) 
 \leq \card  \K_s \leq    ( 1 +  \lfloor s/\gamma \rfloor) (1 + s - \gamma  \lfloor s/\gamma \rfloor/2),
\]
which yields~\eqref{eq:ineqcardKs}.  Likewise, we  derive from~\eqref{eq:sommeKs}
\[
 \sum_{j=0}^{\lfloor s/\gamma \rfloor}  \frac{(s -  j \gamma  )(s +
   j \gamma -1)}{2}  \le \sum_{(i,j)\in\K_s}  (i + j \gamma) 
 \le \sum_{j=0}^{\lfloor s/\gamma \rfloor}   \frac{(1+s -  j \gamma  )(s +
   j \gamma )}{2},
 \]
which yields~\eqref{eq:ineqsommeKs}.
\end{proof}

\begin{lemma}\label{lem:phi}
  Let $\phi$ be the function from $[1, +\infty)$ to $[1, +\infty)$ defined by   $\phi(x) = (1 + \lfloor x \rfloor) (x - \lfloor x \rfloor /2)$.   Then $\phi$ is continuous and strictly increasing, and defines   a bijection from $[1, +\infty)$ to $[1, +\infty)$.  For any $x \geq 1$, we have $ x(x+1)/2 \le \varphi (x) \le (x+1/2)^2/2$ and $\sqrt{2x} -1/2 \le \phi^{-1}(x) \le \sqrt{2x+1/4} - 1/2$.
\end{lemma}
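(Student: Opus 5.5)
We work on each interval $[n, n+1)$, $n \ge 1$ an integer, where $\lfloor x \rfloor = n$ and $\phi(x) = (1+n)(x - n/2)$ is affine with slope $1+n>0$. So $\phi$ is strictly increasing on each such piece. For continuity at the integer $n+1$, compare the left limit $(1+n)(n+1-n/2) = (n+1)(n+2)/2$ with the value $\phi(n+1) = (n+2)(n+1 - (n+1)/2) = (n+1)(n+2)/2$. They agree, so $\phi$ is continuous and strictly increasing on $[1,+\infty)$. We have $\phi(1) = 2\cdot 1/2 = 1$, and $\phi(x) \ge x \to +\infty$ (this also follows from the lower bound below). The intermediate value theorem then gives a bijection $[1,+\infty)\to[1,+\infty)$, and $\phi^{-1}$ is continuous and increasing.

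For the two-sided bound, set $n = \lfloor x \rfloor$ and $x = n + r$ with $r \in [0,1)$. Then
\[
\phi(x) = (1+n)\left(\tfrac{n}{2} + r\right).
\]
For the lower bound we compute
\[
2\phi(x) - x(x+1) = (n+1)(n+2r) - (n+r)(n+r+1) = r(1-r) \ge 0.
\]
For the upper bound we expand $(x+1/2)^2 = (n+r+1/2)^2$ and find
\[
(n+r+1/2)^2 - 2\phi(x) = r^2 - r + 1/4 = (r-1/2)^2 \ge 0.
\]
This gives $x(x+1)/2 \le \phi(x) \le (x+1/2)^2/2$. This routine algebra is the only real computation, and I expect it to be the main (minor) obstacle, chiefly keeping the expansions straight.

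Finally, we invert. Set $y = \phi(x)$. Since $x(x+1)/2 \le y$, solving the quadratic $x^2 + x - 2y \le 0$ gives $x \le \sqrt{2y + 1/4} - 1/2$. Since $y \le (x+1/2)^2/2$, we get $x + 1/2 \ge \sqrt{2y}$, i.e. $x \ge \sqrt{2y} - 1/2$. Writing $x = \phi^{-1}(y)$ yields the stated bounds on $\phi^{-1}$ for every $y \ge 1$.
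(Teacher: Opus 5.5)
Your proof is correct and follows essentially the same route as the paper: the paper also checks continuity by comparing one-sided limits at integers, and on each $[k,k+1]$ it compares $\phi$ with the quadratics $x(x+1)/2$ and $(x+1/2)^2/2$. The only difference is that the paper uses the sign of the derivative $k+1/2-x$ of the difference (equality at the endpoints, resp.\ at $k+1/2$), where you use the closed forms $r(1-r)$ and $(r-1/2)^2$, which make the equality cases transparent; you also write out the inversion step that the paper leaves as straightforward.
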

\begin{proof}
  For $x \not \in \Z$, it is clear that $\phi$ is $\mathcal{C}^1$ in a neighborhood  of $x$ and that $\phi'(x) \ge 1$. 

  For $x$ in $\Z$, we have $\phi(x) = \lim_{t\rightarrow x^+} \phi(t) = (1+x)x/2$,
  whereas $\lim_{t\rightarrow x^-} \phi(t) = x(x-(x-1)/2) = x(x+1)/2$. This
  proves continuity, and the remaining assertions follow.

  Let $k \in \nat, a \in \ree$ and $g_a(x)=(x+1-a)(x+a)/2$. We denote by $\phi_k$ the restriction of $\phi$ to $[k,k+1]$. 
  For all $x \in [k,k+1]$, $(\phi_k - g_a)'(x) = k+1/2-x$ : the function $\phi_k - g_a$ is increasing over $[k,k+1/2]$ and decreasing over $[k+1/2,k+1]$. Now, we remark that $\phi_k (k) = g_0(k)$ and $\phi_k (k+1) = g_0(k+1)$, which yields  $\phi_k (x) \ge g_0(x) = x(x+1)/2$ for all $x\in [k,k+1]$, and  $\phi_k (k+1/2) = g_{1/2}(k+1/2)$, which yields  $\phi_k (x) \le g_{1/2}(x) = (x+1/2)^2/2$ for all $x\in [k,k+1]$.

  The proof of the remaining inequalities is straightforward.
\end{proof}

\begin{lemma}\label{lem:estim_cardKs} Let $3 \le \gamma \le N$,  $s = \gamma \phi^{-1}(N/\gamma)$,   $N_1 = \left \lfloor \sqrt{2 N \gamma}  \right \rfloor$ and $N_2 =  \left \lceil \sqrt{2N/\gamma} \, \right \rceil$. We have $\gamma \geq N_1/N_2 \geq 1$, $s < N_1$ and
$  \sqrt{2N} (\sqrt{2N} - \sqrt{3}/3)     \leq N_1 N_2 \leq  (2+ \sqrt{2}) N$.
  
  Assume that $N \rightarrow \infty$,   then $\card \K_s \le N + O(s / \gamma)$. 
\end{lemma}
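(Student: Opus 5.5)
We check the claims in the order stated. Write $N_1 = \lfloor \sqrt{2N\gamma}\rfloor$ and $N_2 = \lceil \sqrt{2N/\gamma}\rceil$.

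\emph{The ratio $N_1/N_2$.} We have $N_1 \le \sqrt{2N\gamma}$ and $N_2 \ge \sqrt{2N/\gamma}$, so $N_1/N_2 \le \gamma$. For the lower bound, $\gamma \le N$ gives $\sqrt{2N/\gamma}\ge \sqrt 2$, hence $N_2 \le \sqrt{2N/\gamma}+1$. Since $\gamma \ge 3$ and $N\ge 3$, we have $\sqrt{2N\gamma} \ge \sqrt{2N/\gamma}\cdot\gamma$. Therefore
\[
N_1 \ge \sqrt{2N\gamma}-1 \ge \sqrt{2N/\gamma}+1 \ge N_2 ,
\]
where the middle inequality needs $\sqrt{2N/\gamma}\,(\gamma-1)\ge 2$. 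This holds because $\sqrt{2N/\gamma}\ge\sqrt2$ and $\gamma-1\ge 2$.

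\emph{The product $N_1N_2$.} For the upper bound,
\[
N_1N_2 \le \sqrt{2N\gamma}\,\bigl(\sqrt{2N/\gamma}+1\bigr) = 2N+\sqrt{2N\gamma} \le (2+\sqrt2)N ,
\]
using $\gamma\le N$. For the lower bound,
\[
N_1N_2 \ge \bigl(\sqrt{2N\gamma}-1\bigr)\sqrt{2N/\gamma} = 2N - \sqrt{2N}/\sqrt{\gamma} \ge 2N-\sqrt{2N}/\sqrt3 ,
\]
and the right-hand side equals $\sqrt{2N}\,(\sqrt{2N}-\sqrt3/3)$.

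\emph{The bound $s<N_1$.} By Lemma~\ref{lem:phi}, $\phi^{-1}(N/\gamma)\le \sqrt{2N/\gamma+1/4}-1/2$. Hence
\[
s \le \sqrt{2N\gamma+\gamma^2/4}-\gamma/2 .
\]
We need this to be strictly less than $\sqrt{2N\gamma}-1$, which suffices since $N_1 > \sqrt{2N\gamma}-1$. Set $A=\sqrt{2N\gamma}$. Squaring, the required inequality reduces to
\[
A^2+\gamma^2/4 < (A-1+\gamma/2)^2 ,
\]
that is,
\[
0 < A(\gamma-2) + 1 - \gamma .
\]
This holds because $A\ge \sqrt{6N}\ge 3$ and $\gamma\ge 3$, so $A(\gamma-2)\ge \gamma-2+2(\gamma-2)\ge \gamma-1$ for $\gamma \ge 3$. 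This step is routine but is the place where the constraints $\gamma\ge3$ and $N\ge3$ actually bite, so the constants need checking.

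\emph{The asymptotic bound on $\card\K_s$.} Since $s<N_1$ and $\gamma\ge N_1/N_2\ge1$, Lemma~\ref{lem:estim_card_sum_Ks} applies, and its upper bound in~\eqref{eq:ineqcardKs} gives
\[
\card\K_s \le (1+\lfloor s/\gamma\rfloor)\bigl(1+s-\gamma\lfloor s/\gamma\rfloor/2\bigr).
\]
Put $x=s/\gamma=\phi^{-1}(N/\gamma)$. The right-hand side becomes
\[
(1+\lfloor x\rfloor)\bigl(1+\gamma(x-\lfloor x\rfloor/2)\bigr) = \gamma\phi(x) + 1+\lfloor x\rfloor = N + 1 + \lfloor s/\gamma\rfloor .
\]
This is $N+O(s/\gamma)$, and indeed holds exactly, not merely asymptotically.

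The main obstacle is only the constant bookkeeping in the $s<N_1$ step. The cardinality bound is immediate from the identity $\gamma\phi(s/\gamma)=N$.
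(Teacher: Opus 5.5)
Your proof is correct and follows the paper's argument step by step. You use the same elementary floor/ceiling bounds for $N_1/N_2$ and $N_1N_2$, and the same bound $\phi^{-1}(x)\le\sqrt{2x+1/4}-1/2$ from Lemma~\ref{lem:phi} to get $s<N_1$; you square directly where the paper invokes $(\sqrt{8x+1}-1)/2<\sqrt{2x}-3/8$, and only the non-strict $s\le\sqrt{2N\gamma}-1$ is needed there since $N_1>\sqrt{2N\gamma}-1$. For the cardinality you use the upper bound in \eqref{eq:ineqcardKs} together with $\gamma\phi(s/\gamma)=N$, which you make exact as $\card\K_s\le N+1+\lfloor s/\gamma\rfloor$.
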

\begin{proof}
We have
  \[
  \sqrt{2N\gamma} - \sqrt{2N/\gamma} = \sqrt{2N\gamma} (1 - 1/\gamma) \ge 1,
  \]
  since $N \ge \gamma \ge 3$. It follows $ N_1 = \left \lfloor \sqrt{2 N \gamma} \right \rfloor \ge \left \lceil \sqrt{2N/\gamma}\, \right \rceil = N_2$, hence $N_1/N_2 \ge 1$. Moreover, for any $\gamma \ge 3$, 
  \[
  N_1 = \left  \lfloor \sqrt{2 N \gamma} \right \rfloor \le \gamma \sqrt{2 N /\gamma} \le \gamma N_2.
  \]

  Also,  
  \[
\left ( \sqrt{2N\gamma} - 1 \right )    \sqrt{2 N/ \gamma}   \leq  N_1N_2 \leq   \sqrt{2 N \gamma}  \left (1+\sqrt{2N/\gamma} \right ),
  \]
hence
\[
\sqrt{2N} (\sqrt{2N} - \sqrt{3}/3)     \leq N_1 N_2 \leq  (2+ \sqrt{2}) N
\]
  since $N \ge \gamma \ge 3$.
  
  Finally, from Lemma~\ref{lem:phi} and the fact that $ (\sqrt{8x+1}-1)/2 < \sqrt{2x} - 3/8$ for all $x\ge 1$, we have $s =  \gamma \phi^{-1}(N/\gamma) < \gamma \left ( \sqrt{2N/\gamma} - 3/8\right ) <  \sqrt{2N\gamma} - 1 \le N_1$ since $N \ge\gamma \ge 3$.   Therefore, we can apply Lemma~\ref{lem:estim_card_sum_Ks}: we have, from~\eqref{eq:ineqcardKs},
\[
  \card \K_s = \gamma (1 + \lfloor s/\gamma \rfloor) (s/\gamma - \lfloor s/\gamma \rfloor / 2) + O(s/\gamma)
   \le \gamma \phi(s/\gamma) + O(s/\gamma)
   \le N + O(s/\gamma).
  \]
\end{proof}
\begin{corollary}\label{cor:estim_sumKs}\label{prop:asympt-omega}
  With the assumptions of Lemma~\ref{lem:estim_cardKs},
  put $\lambda = N/\gamma$. Then, we have
  \[
  \Omega_\gamma(N, N_1, N_2) = \psi(\lambda) N \gamma + O(N),
  \]
  where
  \begin{equation}\label{eq:defpsi}
  \psi(\lambda) = \frac{1 + \lfloor \phi^{-1}(\lambda) \rfloor}{12 \lambda}
  \left( 6\phi^{-1}(\lambda)^2 - \lfloor \phi^{-1}(\lambda) \rfloor - 2
  \lfloor \phi^{-1}(\lambda) \rfloor^2\right).
  \end{equation}
  
  Note that if $\gamma = o(N)$, we have $\lambda \rightarrow \infty$ and
  $\phi^{-1}(\lambda) = \sqrt{2\lambda} + O(1)$, so that
  $\psi(\lambda) = 2\sqrt{2\lambda}/3 + O(1)$ and 
  \[
  \Omega_\gamma(N, N_1, N_2) = \frac{2\sqrt{2}}{3} N^{3/2} \gamma^{1/2} + O(N\gamma).
  \]
\end{corollary}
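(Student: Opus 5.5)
We apply Lemma~\ref{lem:bnd_using_ks} with $s = \gamma \phi^{-1}(\lambda)$, where $\lambda = N/\gamma$. Lemma~\ref{lem:estim_cardKs} guarantees that its hypotheses hold: $s<N_1$ and $\gamma\ge N_1/N_2\ge 1$. This gives the lower bound
\[
\Omega_\gamma(N,N_1,N_2) \ge s(N-\card\K_s) + \sum_{(i,j)\in\K_s}(i+\gamma j).
\]
For the matching upper bound we argue directly. If $\card\K_s \ge N$, the $N$ smallest elements of $S$ are all at most $s$, and they are dominated by the sum over $\K_s$ minus the (at most $\card\K_s-N$) largest terms, each of which is close to $s$. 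If $\card\K_s<N$, we complete $\K_s$ by $N-\card\K_s$ elements of size at most $s+\gamma+1$. Here we use that the next layer $\K_{s+\gamma+1}\setminus\K_s$ has at least $1+\lfloor s/\gamma\rfloor$ points, which is enough because $N-\card\K_s = O(s/\gamma)$. In both cases
\[
\Omega_\gamma = \sum_{(i,j)\in\K_s}(i+\gamma j) + s(N-\card\K_s) + O\bigl((s+\gamma)\,|N-\card\K_s|\bigr).
\]

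Next we control $N-\card\K_s$. Write $x=\phi^{-1}(\lambda)$ and $m=\lfloor x\rfloor = \lfloor s/\gamma\rfloor$. By~\eqref{eq:ineqcardKs}, $\card\K_s = \gamma(1+m)(x-m/2) + O(1+m) = \gamma\phi(x)+O(1+m) = N + O(1+m)$. Lemma~\ref{lem:phi} gives $m=O(\sqrt{\lambda})$ and $s=\gamma x = O(\sqrt{N\gamma})$. The error term is therefore $O\bigl((\sqrt{N\gamma}+\gamma)\sqrt{N/\gamma}\bigr) = O(N+\sqrt{N\gamma}) = O(N)$, since $\gamma\le N$. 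The same estimate absorbs the term $s(N-\card\K_s)$.

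It remains to evaluate the main term with~\eqref{eq:ineqsommeKs}. Substituting $s=\gamma x$, $\lfloor s/\gamma\rfloor = m$ gives
\[
\sum_{\K_s}(i+\gamma j) = (1+m)\,\frac{6\gamma^2x^2 - \gamma^2 m - 2\gamma^2 m^2}{12} + O\bigl((1+m)(s+\gamma m)\bigr).
\]
The leading part equals $\gamma^2\lambda\,\psi(\lambda) = N\gamma\,\psi(\lambda)$ by~\eqref{eq:defpsi}. For the remainder, the two bounds in~\eqref{eq:ineqsommeKs} differ by $(1+m)s$, and the term $3\gamma m(1+m)$ contributes $O(\gamma\lambda)=O(N)$. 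Both are $O(\sqrt{\lambda}\sqrt{N\gamma}) = O(N)$.

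The asymptotic statement follows from Lemma~\ref{lem:phi}: $x=\sqrt{2\lambda}+O(1)$ and $m = x+O(1)$. Hence $6x^2-m-2m^2 = 4x^2 + O(x)$, so $\psi(\lambda) = x^3/(3\lambda) + O(1) = 2\sqrt{2\lambda}/3 + O(1)$. Multiplying by $N\gamma$ gives $\frac{2\sqrt2}{3}N^{3/2}\gamma^{1/2}+O(N\gamma)$.

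The main obstacle is the upper-bound direction of the first step, since Lemma~\ref{lem:bnd_using_ks} only gives a lower bound. One has to use the extra $N$ copies of $N_1$ in $S$ carefully: they are larger than $s$, so they never interfere. One also has to bound precisely how many points of the next layer are available, so that the replacement elements cost only $O(s+\gamma)$ each.
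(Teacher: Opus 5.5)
Your proof is correct and follows the paper's argument: you apply Lemma~\ref{lem:bnd_using_ks} with $s=\gamma\phi^{-1}(\lambda)$, then use \eqref{eq:ineqcardKs}, \eqref{eq:ineqsommeKs} and Lemma~\ref{lem:phi} to show that $s(N-\card\K_s)$ and the slack in \eqref{eq:ineqsommeKs} are $O(N)$, with leading term exactly $\gamma^2\lambda\psi(\lambda)=N\gamma\psi(\lambda)$. Your extra upper-bound direction, which the paper leaves implicit (only $\Omega_\gamma\ge\psi(\lambda)N\gamma-O(N)$ is used in Theorem~\ref{thm:majodet-ext2D}), is simpler than you make it: for $x=\phi^{-1}(\lambda)$ the lower bound in \eqref{eq:ineqcardKs} is exactly $\gamma(1+\lfloor x\rfloor)(x-\lfloor x\rfloor/2)=\gamma\phi(x)=N$, so $N\le\card\K_s\le N+1+\lfloor x\rfloor$; hence the case $\card\K_s<N$, with its loosely justified layer count, never arises, and $\Omega_\gamma\le\sum_{\K_s}(i+\gamma j)$ closes the argument.
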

\begin{proof}
  Lemma~\ref{lem:estim_cardKs} shows us that the assumptions of 
  Lemma~\ref{lem:estim_card_sum_Ks} are satisfied; we thus get 
  \[
  \sum_{(i, j)\in \K_s} (i + j\gamma)
  = \psi(\lambda) N \gamma + O(N).
  \]
  Further, note that for our value of $s$, 
  the term $s(N - \card \K_s)$ from~Lemma~\ref{lem:bnd_using_ks} is $O(s^2/\gamma)$, which is   $O(\gamma \phi^{-1}(\lambda)^2) = O(N \phi^{-1}(\lambda)^2/\lambda) = O(N)$, thanks to Lemma~\ref{lem:phi}.
  The result follows. 
\end{proof}

\begin{remark} We can obtain a similar result for $1 < \gamma < 3$ if we set  $N_1 = 1+ \left \lfloor \sqrt{2 N \gamma} \right \rfloor$ and $N_2 = 1 + \left \lceil \sqrt{2N/\gamma} \, \right \rceil$. 
\end{remark}  

\begin{lemma}\label{lem:encadr_psi}
    For $x\in [1, +\infty)$, we have
        \[
        0 \le \psi(x) - \frac{2\varphi^{-1}(x)}{3} + \frac{1}{6}\le  \frac{2-\sqrt{3}}{6},
        \]
        and $\psi(x) - 2 \varphi^{-1}(x)/3 + 1/6 = O(x^{-1/2})$ when $x\rightarrow \infty$.
\end{lemma}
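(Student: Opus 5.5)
The plan is a direct computation: express $\psi(x)$ through $y=\varphi^{-1}(x)$ alone, and then reduce the whole statement to an elementary one-variable inequality. Fix $x\ge 1$ and put $y=\varphi^{-1}(x)\ge 1$, $k=\lfloor y\rfloor\ge 1$ and $t=y-k\in[0,1)$. By definition of $\varphi$ we have $x=\varphi(y)=(1+k)(y-k/2)$. Substituting this into the definition of $\psi$ cancels the factor $1+k$:
\[
\psi(x)=\frac{6y^2-k-2k^2}{6(2y-k)} .
\]

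Next I put everything over the common denominator $6(2y-k)$ to compute $\psi(x)-2y/3+1/6$. The numerator becomes
\[
6y^2-k-2k^2-4y(2y-k)+(2y-k)=-2(y-k)^2+2(y-k)=2t(1-t).
\]
Since $2y-k=k+2t$, this gives the closed form
\[
\psi(x)-\frac{2\varphi^{-1}(x)}{3}+\frac16=\frac{t(1-t)}{3(k+2t)} .
\]
Nonnegativity is then immediate, because $t\in[0,1)$ and $k+2t>0$.

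For the upper bound, the expression decreases in $k$, so it suffices to take $k=1$ and maximize $h(t)=t(1-t)/(3(1+2t))$ over $[0,1)$. The sign of $h'$ is that of $(1-2t)(1+2t)-2t(1-t)=1-2t-2t^2$. This vanishes at $t_0=(\sqrt3-1)/2$, which is the maximizer. There $1+2t_0=\sqrt3$ and $t_0(1-t_0)=(2\sqrt3-3)/2$, so $h(t_0)=(2\sqrt3-3)/(6\sqrt3)=(2-\sqrt3)/6$, as claimed.

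For the asymptotic statement, I bound $t(1-t)\le 1/4$ and $k+2t\ge y$ (since $k+2t=y+t$). This gives
\[
0\le \psi(x)-\frac{2\varphi^{-1}(x)}{3}+\frac16\le \frac{1}{12\,\varphi^{-1}(x)} .
\]
Lemma~\ref{lem:phi} gives $\varphi^{-1}(x)\ge\sqrt{2x}-1/2$, so the right-hand side is $O(x^{-1/2})$.

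The only step needing care is the algebraic simplification of the numerator. I would double-check it on a sample value, for instance $x=1$: then $y=1$, $t=0$, and $\psi(1)=1/2=2/3-1/6$, which is consistent. Everything else is routine one-variable calculus.
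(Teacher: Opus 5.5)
Your proof is correct and follows the paper's route. You write $x=\varphi(y)$, derive the closed form $\psi(x)-\tfrac{2}{3}\varphi^{-1}(x)+\tfrac16=\frac{t(1-t)}{3(k+2t)}$ (identical to the paper's $\frac{\{z\}-\{z\}^2}{6\{z\}+3\lfloor z\rfloor}$), bound it by the case $k=1$, and get the $O(x^{-1/2})$ estimate from $\varphi^{-1}(x)\ge\sqrt{2x}-1/2$; the only addition is that you carry out the maximization giving $(2-\sqrt3)/6$, which the paper merely asserts.
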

\begin{proof}
For $x \ge 1$, we write $x = \varphi(z)$, with $z\ge 1$.
  We also define $\{z\} = z -\lfloor z\rfloor \in [0, 1).$ 
  We have 
  \begin{align*} \label{eq:psiphi}
    \psi(\varphi(z)) & = \frac{6z^2 - \lfloor z\rfloor - 2\lfloor z\rfloor ^2}{12z - 6\lfloor z\rfloor}, \\ 
    &     = \frac{2}{3} z - \frac{1}{6} + \frac{\{z\} - \{z\}^2}{6\{z\} + 3\lfloor z\rfloor}. 
  \end{align*}
  This shows that $\psi(\varphi(z)) - 2z/3 + 1/6 = O(1/z),$ which gives the asymptotic part of the Lemma, as $z = \sqrt{2x} + O(1)$; this also shows that  
  \[
 0 \le \psi(\varphi(z)) - \frac{2}{3} z + \frac{1}{6} \le \frac{\{z\} - \{z\}^2}{6\{z\} + 3}.
  \]
  as $z\ge 1$. The lemma then follows from the fact that $(u - u^2) /(6u + 3) \le (2 - \sqrt{3})/6$ for $u\in [0, 1]$.
\end{proof}

Let $k \in \nat, k \geq 1$, $\varphi_{|[k,k+1)} : x \in [k,k+1) \mapsto (1+k)(x-k/2) \in [k(k+1)/2,(k+1)(k+2)/2)$. For $y \in [k(k+1)/2,(k+1)(k+2)/2)$, we have
$ k = \lfloor \left ( \sqrt{1+8y} -1 \right )/2 \rfloor$ and 
\[
  \varphi^{-1}(y) =  \frac{y}{1+k} + \frac{k}{2}.
\]
Hence 
\[
  \varphi^{-1}(y) =  \frac{y}{1+u(y)} + \frac{u(y)}{2},
\]
where  $u(y) = \left \lfloor  \sqrt{1/4+2y} -1/2 \right \rfloor$ for all $y \geq 1$. 

Writing 
\[
g(y) = \frac{1}{3}\left( \frac{2y}{1+u(y)} + u(y)\right) - \frac{1}{6},
\]
we deduce from the Lemma that $g(y) \le \psi(y) \le g(y) + \frac{2-\sqrt{3}}{6}$ and $\psi(y) - g(y) = O(y^{-1/2}).$ We further check that $\psi(1) = g(1).$

For the sake of completeness, we conclude by deriving an explicit expression for $\psi$. For $y \in [k(k+1)/2,(k+1)(k+2)/2)$,  we have $\varphi^{-1} (y) \in [k,k+1)$, we finally obtain from~\cref{eq:defpsi} the following explicit expression for $\psi$: 
\begin{align*}
  \psi (y) & = \frac{6 \left ( \frac{y}{1+k} + \frac{k}{2} \right )^2 - k - 2k^2}{12 \left ( \frac{y}{1+k} + \frac{k}{2} \right ) - 6k}
  = \frac{12 y^2+12 y k(k+1) -k (k^3+4k^2+5k+2)}{24(k+1)y}\\
  & = \frac{12 y^2+12 y u(y)(u(y)+1) - u(y) (u(y)^3+4 u(y)^2+5u(y)+2)}{24(u(y)+1)y}.
\end{align*}

\section{Volume estimates for rigorous interpolants at the Chebyshev nodes}\label{subsec:extalt2D}

Let $a_1<b_1$, $a_2 < b_2$, let $i=0,\ldots, N-1$, let $f_i$ be a function defined over $[a_1,b_1]\times [a_2,b_2]$. We shall use the following results for the functions $f_i$ defined in~\eqref{eq:fi2D}. If we interpolate $f_i$ by $Q_i (x,t) \in \ree_{N_1-1,N_2-1} [x,t]$ at pairs of Chebyshev nodes $\mu_{k_1,k_2} =
 (\mu_{k_1,N_1-1,[a_1,b_1]},\mu_{k_2,N_2-1,[a_2,b_2]})_{\substack{0 \le k_1 \le N_1 -1,\\  0 \le k_2 \le N_2-1}} $, cf. Section~\ref{subsec:overview:lebesgue},  the corresponding coefficients
$ ( c_{k_1,k_2,i} )_{\substack{0 \le k_1 \le N_1 -1,\\0 \le k_2 \le N_2 -1}}$ are given by
\begin{equation*}
  ( c_{k_1,k_2,i} )_{\substack{0 \le k_1 \le N_1 -1,\\0 \le k_2 \le N_2 -1}} = \frac{4}{N_1 N_2} \textrm{2D-DCT-II} \left ( (f_i(\mu_{N_1 - 1 - \ell_1,N_2 - 1 - \ell_2}) )_{\substack{0 \le \ell_1 \le N_1 -1,\\0 \le \ell_2 \le N_2 -1}}\right )
\end{equation*}
from~\eqref{eq:coeffs-interpol2D}.

Let $\rho_1, \rho_2 \ge 1$,  let $f_0, \ldots, f_{N-1}$ be  functions analytic in a neighborhood of ${E_{\rho_1,a_1,b_1,\rho_2,a_2,b_2}}$, cf.~\Cref{subsec:unifapprox}.
 We introduce the $N\times (N_1 N_2 + N)$ matrix $A = \left( A_1 | A_2 \right)$, defined by
\begin{equation}\label{eq:A1A2_2D}
  \begin{gathered}
    (A_1)_{i,(k_1,k_2)} = 
  \left ( \frac{c_{k_1,k_2,i}}{2^{\delta_{0k_1}+\delta_{0k_2}}} \right )_{\substack{0 \le i \le N - 1,\\ 0 \le k_1 \le N_1 -1,\, 0 \le k_2 \le N_2-1}},\\
  (A_2)_{i,j}  =   
  \, 64 \Mbiv(f_i)
  \left ( \frac{1}{\rho_1^{N_1}}+ \frac{1}{\rho_2^{N_2}}\right) \delta_{ij}, 0 \leq i,j \leq N -1.
  \end{gathered}
\end{equation}
Recall from Proposition~\ref{prop:ineqcauchy2D} that $\norminf{ f_i - Q_i  } \le A_2[i,i],$ $ i = 0, \ldots, N-1.$

The diagonal right part, $A_2$,  of the matrix will be used for controlling that the functions $P_0(ux,v(f(x)+t)), P_1(ux,v(f(x)+t))$, output by the lattice basis reduction process, are uniformly small over $[a_1,b_1]\times[a_2,b_2]$; this accounts for the presence of the $64 \Mbiv(f_i) \left ( \frac{1}{\rho_1^{N_1}}+ \frac{1}{\rho_2^{N_2}}\right)$ remainder term for the approximation of $P_i(ux, v(f(x)+t))$ by its interpolation polynomial at the 2D Chebyshev points.

The smallness of those functions is a consequence of the smallness of the lattice volumes that we now estimate.

\subsection{Volume estimates}
For the sake of simplicity, we now assume that $\rho_1, \rho_2 \ge 2$. In the
sequel, if $f$ is a function, we shall use the notation
$\rM_f = \Mbiv$; if $f$ is a univariate function, this
is to be understood as $\Muniv$. 

The next result gives an upper bound for the volume of the lattice generated by the rows of $A$:
\begin{theorem}\label{thm:majodet-ext2D}
  Let $\rho_1, \rho_2 \ge 2$, $a_1<b_1$, $a_2 < b_2$.   We further assume that $\rho_1^{N_1} \le \rho_2^{N_2}$, and define $\gamma = \log \rho_2/\log \rho_1$.
  Let $f_0, \ldots, f_{N-1}$ be  functions analytic in a neighborhood of ${E_{\rho_1,a_1,b_1,\rho_2,a_2,b_2}}$.
  Then, we have
\[
(\det A \transp{A})^{1/2} \leq \left (2^7 \sqrt{N} \right )^N 2^{N_1N_2} \frac{\prod_{i=0}^{N-1} \Mbiv(f_i)}{\rho_1^{\Omega_\gamma(N, N_1, N_2)}}.
\] 
\end{theorem}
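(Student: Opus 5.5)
We estimate $\det A\transp{A}$ with the Cauchy--Binet (Lagrange) identity, exactly as in the proof of Theorem~\ref{thm:stehle-ants}, but keeping track of which columns are selected. First we bound each entry of $A$ by a product $\mathfrak r_i\mathfrak c_j$. By Proposition~\ref{prop:ineqcauchy2D}, and since $\rho_1,\rho_2\ge 2$ gives $\eta_{\rho_1,k_1}\eta_{\rho_2,k_2}\le 4$ (Lemma~\ref{lem:simpligamma}), the column of $A_1$ indexed by $(k_1,k_2)$ satisfies
\[
|A_1[i,(k_1,k_2)]|\le 16\,\Mbiv(f_i)\,\rho_1^{-k_1}\rho_2^{-k_2}=16\,\Mbiv(f_i)\,\rho_1^{-(k_1+\gamma k_2)},
\]
using $\rho_2=\rho_1^\gamma$. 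The diagonal entries of $A_2$ are $64\,\Mbiv(f_i)(\rho_1^{-N_1}+\rho_2^{-N_2})\le 128\,\Mbiv(f_i)\rho_1^{-N_1}$, since $\rho_1^{N_1}\le\rho_2^{N_2}$. We therefore take $\mathfrak r_i=2^7\Mbiv(f_i)$. For the column $(k_1,k_2)$ of $A_1$ we take $\mathfrak c=\rho_1^{-(k_1+\gamma k_2)}$, and for each of the $N$ columns of $A_2$ we take $\mathfrak c=\rho_1^{-N_1}$. The multiset of exponents $-\log_{\rho_1}\mathfrak c_j$ is then exactly the multiset $S$ of Lemma~\ref{lem:bnd_using_ks}.

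Next we apply Cauchy--Binet. We have $\det A\transp{A}=\sum_J \det(A_J)^2$, where $J$ ranges over the $N$-subsets of the $N_1N_2+N$ columns. For a fixed $J$, we factor $\mathfrak r_i$ out of the rows and $\mathfrak c_j$ out of the columns and apply Hadamard's inequality, which gives
\[
|\det A_J|\le N^{N/2}\prod_i\mathfrak r_i\prod_{j\in J}\mathfrak c_j .
\]
The product $\prod_{j\in J}\mathfrak c_j$ is at most the product of the $N$ largest $\mathfrak c_j$, which equals $\rho_1^{-\Omega_\gamma(N,N_1,N_2)}$ by the definition of $\Omega_\gamma$ as the sum of the $N$ smallest elements of $S$.

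It remains to control the number of subsets $J$. We do not use $\binom{N_1N_2+N}{N}^{1/2}$ crudely; we want it to produce only the factor $2^{N_1N_2}$. Split $J$ into $J_1\subset A_1$-columns and $J_2\subset A_2$-columns. Because $A_2$ is diagonal, $\det A_J$ vanishes unless the rows indexed by $J_2$ are matched; concretely, expanding along the $A_2$ columns shows that $J$ is determined by $J_1$, up to the nonvanishing requirement. Hence the number of nonzero terms is at most the number of subsets $J_1$ of the $N_1N_2$ columns of $A_1$, namely $2^{N_1N_2}$. Taking square roots, the square root of $2^{N_1N_2}$ is $2^{N_1N_2/2}\le 2^{N_1N_2}$, so this fits the stated bound comfortably. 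Combining everything gives
\[
(\det A\transp{A})^{1/2}\le 2^{N_1N_2}N^{N/2}\,2^{7N}\prod_i\Mbiv(f_i)\,\rho_1^{-\Omega_\gamma},
\]
which is the claim.

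The main obstacle is this counting step: a naive bound $\binom{N_1N_2+N}{N}$ would give a factor of order $2^{N_1N_2+N}$, and the extra $2^N$ would have to be absorbed into the constant $2^7$. If the exact matching argument becomes awkward, we fall back on the estimate $\binom{N_1N_2+N}{N}\le 2^{N_1N_2+N}$. Its square root is $2^{(N_1N_2+N)/2}$, and the $2^{N/2}$ part is absorbed by replacing $2^7$ with a slightly larger constant, or by reusing the slack of $2$ in the $A_2$ entries.
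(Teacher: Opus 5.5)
Your entry bounds, the weights $\mathfrak r_i=2^7\Mbiv(f_i)$ and $\mathfrak c_j$, the identification of the product of the $N$ largest $\mathfrak c_j$ with $\rho_1^{-\Omega_\gamma(N,N_1,N_2)}$, and the Cauchy--Binet plus Hadamard step are all correct. Together they are exactly what the paper does by invoking Theorem~\ref{thm:stehle-ants}.

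The gap is in your counting step: it is not true that $J$ is determined by $J_1$. Write $J=J_1\cup J_2$ with $|J_1|=k$ and expand along the columns of the diagonal block $A_2$. This gives
\[
\det A_J=\pm\prod_{j\in J_2}A_2[j,j]\cdot\det A_1\bigl[\{0,\dots,N-1\}\setminus J_2,\;J_1\bigr].
\]
So for a fixed $J_1$, each of the $\binom{N}{N-k}$ admissible sets $J_2$ gives a nonzero term as soon as the corresponding $k\times k$ minor of $A_1$ is nonzero. For $A_1$ with all minors nonzero, which can be arranged with suitable (e.g.\ polynomial) $f_i$, all $\binom{N_1N_2+N}{N}$ terms are nonzero. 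For $N=3$, $N_1=N_2=2$ this is $35>2^4$. So the claim ``at most $2^{N_1N_2}$ nonzero terms'' is false, and in general the count really is $\binom{N_1N_2+N}{N}$.

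Your fallback, as you state it, does not prove the statement either. Enlarging $2^7$ proves a strictly weaker inequality than the one claimed. There is also no uniform slack of $2$ in the $A_2$ columns to reuse: $64\Mbiv(f_i)(\rho_1^{-N_1}+\rho_2^{-N_2})=\mathfrak r_i\rho_1^{-N_1}$ exactly when $\rho_1^{N_1}=\rho_2^{N_2}$.

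What you are missing is the standing hypothesis $N\le N_1N_2$, imposed at the beginning of Section~\ref{sec:two-var}. With it,
\[
\binom{N_1N_2+N}{N}^{1/2}\le 2^{(N_1N_2+N)/2}\le 2^{N_1N_2}.
\]
This is precisely where the factor $2^{N_1N_2}$ in the statement comes from, and the constant $2^7$ stays untouched. It is how the paper concludes. With this one-line replacement for your counting paragraph, your proof is complete.
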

\begin{proof}
For $0 \le i, j \le N -1$, $0 \le k_1 \le N_1-1$, $0 \le k_2 \le N_2 -1$,  we have from Proposition~\ref{prop:ineqcauchy2D}, 
\[ 
\left | (A_1)_{i,(k_1,k_2)}  \right | \le  \left |  \frac{c_{k_1,k_2,i}}{2^{\delta_{0k_1}+\delta_{0k_2}}}  \right | \le 
   \frac{4 \Mbiv(f_i)}{ \rho_1^{k_1}  \rho_2^{k_2}} \frac{\rho_1^2 + 1}{\rho_1^2 - 1}\frac{\rho_2^2 + 1}{\rho_2^2 - 1}  \le \frac{16  \Mbiv(f_i)}{\rho_1^{k_1} \rho_2^{k_2}}. 
\]
Moreover, from the definition of $A_2$ and the assumption $\rho_1^{N_1} \le \rho_2^{N_2}$, we know that, for $0 \le i, j \le N -1$,
\[
\left | A_{2, i, j} \right | \le \frac{128 \Mbiv(f_i)}{\rho_1^{N_1}}.
\]
We now apply  Theorem~\ref{thm:stehle-ants}. We put ${\mathfrak r}_i = 128 \Mbiv(f_i)$ for $i =0, \ldots,~N-1$. Then, notice that the product of the $N$ largest elements among the ${\mathfrak c}_j$'s
\[
1, \ldots, \frac{1}{\rho_1^{k_1} \rho_2^{k_2}}, \ldots, \frac{1}{\rho_1^{N_1-1}\rho_2^{N_2-1}}, \underbrace{\frac{1}{\rho_1^{N_1}}, \ldots,  \frac{1}{\rho_1^{N_1}}}_{N \textrm{times}},
\]
is upper bounded by $\rho_1^{-\Omega_{\gamma}(N_1, N_2, N)}$ thanks to the assumption $\rho_1^{\gamma} = \rho_2$, or equivalently, $\rho_1^{-i}\rho_2^{-j} = \rho_1^{-i-\gamma j}$.

We can now apply Theorem~\ref{thm:stehle-ants} to obtain the bound
\[
(\det A \transp{A})^{1/2} \leq \left (2^7 \sqrt{N} \right )^N 2^{\frac{N_1N_2 + N}{2}}  \frac{\prod_{i=1}^N \Mbiv(f_i)}{\rho_1^{\Omega_\gamma(N, N_1, N_2)}}, 
  \]
  from which the theorem follows, in view of the preliminary assumption
  that $N_1 N_2 \ge N$. 
\end{proof}

\begin{remark}\label{rem:hatA2D}
  By construction, $\left | \hat{A} [i,j]  \right | \leq  \left | A [i,j]  \right |$ for all $i,j$. Hence, Theorem~\ref{thm:majodet-ext2D} and its corollaries, which proceed by upper bounding the coefficients of $A$ and applying~Theorem~\ref{thm:stehle-ants}, also hold for  $(\det \hat{A} \transp{\hat{A}})^{1/2}$.
  \end{remark}

Proposition~\ref{prop:asympt-omega} allows us to give asymptotic versions of Theorem~\ref{thm:majodet-ext2D}, which will be more convenient in the sequel. 
\begin{corollary}\label{cor:majodet2D}
  Let $\rho_1, \rho_2 \ge 2$ such that $\gamma = \log \rho_2/\log \rho_1 \in [3,N]$. Let $a_1<b_1$, $a_2 < b_2$,   
  $N_1 =\left  \lfloor \sqrt{2 N \gamma} \right \rfloor$ and $N_2 =  \left \lceil \sqrt{2N/\gamma} \right \rceil$.  Let $f_0, \ldots, f_{N-1}$ be  functions analytic in a neighborhood of ${E_{\rho_1,a_1,b_1,\rho_2,a_2,b_2}}$.

Assume that $N \rightarrow \infty$,  we  obtain 
\[
  (\det A \transp{A})^{1/2} \leq 2^{O(N \log N)} \frac{\prod_{i=0}^{N-1} \Mbiv(f_i)}{\rho_1^{\psi(N/\gamma) N \gamma + O(N)}}.
\]
\end{corollary}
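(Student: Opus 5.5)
The plan is to combine Theorem~\ref{thm:majodet-ext2D} with the asymptotic evaluation of $\Omega_\gamma$ given in Corollary~\ref{cor:estim_sumKs}. First we check that the hypotheses of Theorem~\ref{thm:majodet-ext2D} hold in the present setting. We have $\rho_1,\rho_2\ge 2$, and $\gamma=\log\rho_2/\log\rho_1$ by definition. We also need $\rho_1^{N_1}\le\rho_2^{N_2}$, which amounts to $N_1\le\gamma N_2$. This was established in the proof of Lemma~\ref{lem:estim_cardKs}: there $N_1=\lfloor\sqrt{2N\gamma}\rfloor\le\gamma\sqrt{2N/\gamma}\le\gamma N_2$ for $\gamma\ge 3$. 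The same lemma gives $N\le\sqrt{2N}(\sqrt{2N}-\sqrt3/3)\le N_1N_2$ for $N$ large, so the standing assumption $N_1N_2\ge N$ is satisfied.

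Theorem~\ref{thm:majodet-ext2D} then gives
\[
(\det A\transp{A})^{1/2}\le (2^7\sqrt N)^N\,2^{N_1N_2}\,\frac{\prod_i \Mbiv(f_i)}{\rho_1^{\Omega_\gamma(N,N_1,N_2)}}.
\]
The prefactor is handled as follows. We have $(2^7\sqrt N)^N=2^{7N+\frac N2\log_2 N}=2^{O(N\log N)}$. By Lemma~\ref{lem:estim_cardKs}, $N_1N_2\le(2+\sqrt2)N$, so $2^{N_1N_2}=2^{O(N)}$. Together these give the claimed $2^{O(N\log N)}$.

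For the denominator, we invoke Corollary~\ref{cor:estim_sumKs} with $\lambda=N/\gamma\in[1,N/3]$. Its hypotheses are exactly those of Lemma~\ref{lem:estim_cardKs}, namely $3\le\gamma\le N$ and the same choices of $N_1,N_2$. It yields $\Omega_\gamma(N,N_1,N_2)=\psi(N/\gamma)N\gamma+O(N)$, and hence $\rho_1^{-\Omega_\gamma}=\rho_1^{-(\psi(N/\gamma)N\gamma+O(N))}$, which is the stated bound.

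The only delicate point is uniformity. The $O(N)$ term from Corollary~\ref{cor:estim_sumKs} must hold uniformly in $\gamma\in[3,N]$ as $N\to\infty$, rather than for a fixed ratio. To confirm this, I would trace back through Lemmas~\ref{lem:bnd_using_ks}, \ref{lem:estim_card_sum_Ks} and~\ref{lem:phi}. The error terms there are $O(s/\gamma)$, $O(s^2/\gamma)$ and $O(\gamma\phi^{-1}(\lambda)^2)$, where $s=\gamma\phi^{-1}(\lambda)$. Since $\phi^{-1}(\lambda)\le\sqrt{2\lambda+1/4}$, each of these is bounded by an absolute constant times $N$, uniformly in $\gamma$.

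There is one further check. Lemma~\ref{lem:bnd_using_ks} only gives a lower bound on $\Omega_\gamma$, but a lower bound is exactly the direction needed, since it yields an upper bound on $\rho_1^{-\Omega_\gamma}$ (recall $\rho_1\ge 2>1$).
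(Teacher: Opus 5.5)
Your proposal is correct and takes the same route as the paper: the paper obtains this corollary by inserting $\Omega_\gamma(N,N_1,N_2)=\psi(N/\gamma)N\gamma+O(N)$ from Corollary~\ref{cor:estim_sumKs} into Theorem~\ref{thm:majodet-ext2D}, and absorbing $(2^7\sqrt N)^N 2^{N_1N_2}$ into $2^{O(N\log N)}$ using $N_1N_2\le(2+\sqrt2)N$. Your two additional checks are sound refinements that the paper leaves implicit: that the $O(N)$ term is uniform for $\gamma\in[3,N]$, and that only the lower bound on $\Omega_\gamma$ is needed.
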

We now specialize this statement to our situation, where we shall use the ordered list of functions 
\begin{multline}\label{eq:fi2D}
[ f_i, 0 \le i \le \underbrace{(d+1)(d+2)/2}_{=:N} - 1 ] = [  x \mapsto f_{k,\ell} (x,t),  0 \le \ell \le d, 0 \le k \le d - \ell ]  \\ := [  x \mapsto u^k x^k v^\ell (f(x)+t)^\ell,  0 \le \ell \le d, 0 \le k \le d - \ell ]. 
\end{multline}

\begin{corollary}\label{cor:bnd_with_alpha2D}
  Let $d \ge 1$, $N =(d+1)(d+2)/2$, $u, v \in \nat \setminus \{ 0 \}$.  Let $\rho_1, \rho_2 \ge 2$ such that $\gamma = \log \rho_2/\log \rho_1 \in [3,N]$. Let $a_1<b_1$, $a_2 <b_2$,    $N_1 = \left \lfloor \sqrt{2 N \gamma} \right \rfloor$ and $N_2 = \left \lceil \sqrt{2N/\gamma}\, \right \rceil$. Let $f$ be a function analytic in a   neighborhood of ${E_{\rho,a_1,b_1}}$. Define
  the matrices $A_1$, $A_2$, $A = (A_1 | A_2)$ as in \eqref{eq:A1A2_2D}, and   the quantity   $\Delta_{N,N_1, N_2,[a_1,b_1],[a_2, b_2], \rho_1, \rho_2} := (\det A \transp{A})^{1/2}$.
  We have, as $d \rightarrow + \infty$,
\begin{multline*}
  \Delta_{N,N_1, N_2,[a_1,b_1],[a_2, b_2], \rho_1, \rho_2}^{1/(N-1)} \leq  2^{O(1)} \sqrt{N}^{1 + o(1)} \frac{(uv)^{d/3 + O(1)}}{\rho_1^{\psi(N/\gamma) \gamma + O(1)}}  \\
\left ( \frac{b_1-a_1}{2} \rho_1 + \left | \frac{b_1+a_1}{2}\right |\right )^{d/3 + O(1)}    \left (\Muniv(f) +  \frac{b_2-a_2}{2} \rho_2 + \left | \frac{b_2+a_2}{2}\right | \right )^{d/3 + O(1)}.
\end{multline*}
\end{corollary}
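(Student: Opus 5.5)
We start from Corollary~\ref{cor:majodet2D}, applied to the ordered list of functions \eqref{eq:fi2D}. The hypotheses match: $\rho_1,\rho_2\ge 2$, $\gamma\in[3,N]$, the prescribed $N_1,N_2$, and each $f_{k,\ell}$ analytic near $E_{\rho_1,a_1,b_1,\rho_2,a_2,b_2}$. This gives
\[
\Delta \le 2^{O(N\log N)}\,\frac{\prod_{k+\ell\le d}\Mbiv(f_{k,\ell})}{\rho_1^{\psi(N/\gamma)N\gamma+O(N)}}.
\]
Raising to the power $1/(N-1)$ turns the denominator into $\rho_1^{\psi(N/\gamma)\gamma\, N/(N-1)+O(1)}$. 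Since $\psi(N/\gamma)\gamma\le \psi(N/\gamma)N/\lambda$ with $\lambda=N/\gamma$, and $\psi(\lambda)\lambda^{-1}$ is bounded, we get $\psi(N/\gamma)\gamma/(N-1)=O(1)$. The denominator is therefore $\rho_1^{\psi(N/\gamma)\gamma+O(1)}$. For the constant factor I would rather use the explicit bound of Theorem~\ref{thm:majodet-ext2D}: $(2^7\sqrt N)^{N}2^{N_1N_2}$. Lemma~\ref{lem:estim_cardKs} gives $N_1N_2=O(N)$, so after the $1/(N-1)$-th root this factor becomes $2^{O(1)}\sqrt N^{\,1+o(1)}$, which is exactly the required shape.

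Next we bound the product of the maxima. Since $f_{k,\ell}(x,t)=u^kx^kv^\ell(f(x)+t)^\ell$, submultiplicativity of the max on the polydisc-like domain gives $\Mbiv(f_{k,\ell})\le u^kv^\ell\Muniv(x)^k\Mbiv(f(x)+t)^\ell$. We then use $\Muniv(x)\le \frac{b_1-a_1}{2}\rho_1+|\frac{a_1+b_1}{2}|$, because a point of $E_{\rho_1,a_1,b_1}$ has modulus at most $\frac{b_1-a_1}{2}\frac{\rho_1+\rho_1^{-1}}{2}+|\frac{a_1+b_1}{2}|$. Similarly $\Mbiv(f(x)+t)\le\Muniv(f)+\frac{b_2-a_2}{2}\rho_2+|\frac{a_2+b_2}{2}|$.

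The remaining step is the exponent count. By symmetry of the index set $\{k+\ell\le d\}$,
\[
\sum_{k+\ell\le d}k=\sum_{k+\ell\le d}\ell=\frac{d(d+1)(d+2)}{6}=\frac{d\,N}{3}.
\]
Dividing by $N-1$ gives $\frac{d}{3}\cdot\frac{N}{N-1}=\frac d3+O(1/d)$, so each of $u$, $v$, $\Muniv(x)$ and $\Mbiv(f+t)$ appears with exponent $d/3+O(1)$, as claimed.

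One caveat concerns the case where some base is $<1$: raising it to an exponent larger than $d/3$ would not give an upper bound. Here the $O(1)$ in the exponent absorbs the discrepancy, because the true exponent is $d/3\cdot N/(N-1)$, which is $d/3+o(1)$. Any base $<1$ can also be replaced by $\max(1,\cdot)$ at no cost.

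The main obstacle is making sure the $o(1)$ and $O(1)$ terms really come out as stated. The key check is that $\Omega_\gamma/(N-1)=\psi\gamma+O(1)$ uniformly for $\gamma\in[3,N]$, which relies on the $O(N)$ error in Corollary~\ref{prop:asympt-omega} being uniform in $\gamma$. I would check this first, since everything else is direct bookkeeping.
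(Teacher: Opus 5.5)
Your proposal is correct and follows the paper's own route. You apply the volume bound to the functions $f_{k,\ell}=u^kx^kv^\ell(f(x)+t)^\ell$, bound each $\Mbiv(f_{k,\ell})$ by products of the ellipse estimates for $|x|$ and $|f(x)+t|$, and use $\sum_{k+\ell\le d}k=\sum_{k+\ell\le d}\ell=dN/3$ to get the exponent $dN/(3(N-1))=d/3+O(1)$. Your bookkeeping is in fact more careful than the paper's terse proof in two places. First, going back to the explicit constant $(2^7\sqrt N)^N2^{N_1N_2}$ of Theorem~\ref{thm:majodet-ext2D} is what actually yields the $\sqrt N^{1+o(1)}$ factor; the $2^{O(N\log N)}$ of Corollary~\ref{cor:majodet2D} would only give $N^{O(1)}$ after the $(N-1)$-th root. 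Second, the uniformity in $\gamma$ of the $O(N)$ term that you flag does hold, by the explicit bounds of Lemma~\ref{lem:estim_card_sum_Ks}.
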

\begin{proof}
Note that each $f_{k,\ell}$ is analytic in a neighborhood of ${E_{\rho_1,a_1,b_1,\rho_2,a_2,b_2}}$. Also, since $\sum_{0 \leq k + \ell \leq d} k = \sum_{0 \leq k + \ell \leq d} \ell =  dN/3$, the exponent of $uv$, $\frac{b_1-a_1}{2} \left ( \frac{\rho_1 + \rho_1^{-1}}{2} \right ) + \left | \frac{b_1+a_1}{2}\right | \le \frac{b_1 - a_1}{2} \rho_1  + \left| \frac{b_2+a_2}{2}\right|$ and $\Muniv(f) +  \frac{b_2-a_2}{2} \left ( \frac{\rho_2 + \rho_2^{-1}}{2} \right ) + \left | \frac{b_2+a_2}{2}\right | \le  \Muniv(f) +  \frac{b_2-a_2}{2}\rho_2 + \left | \frac{b_2+a_2}{2}\right |$
  is $\frac{dN}{3(N-1)} = \frac{d}{3} +O(1)$.
\end{proof}

\section{Practical remarks} \label{sec:practical}

Algorithms~\ref{algo:build_two_var} and~\ref{algo:2variables} have been written with readability in mind. We now add some practical clarifications. 

\subsection{Efficiency}\label{sssec:eff2D}
The number of DCT calls (see Step~\ref{step:dct2D} in
Algorithm~\ref{algo:build_two_var}) can be reduced from $O(N)$ to
$O(d)$ by noticing that, for a certain function $F$ (we use $F(x) =
\varphi(x, t)$ for fixed $t$), the DCT $\delta'$ of the vector $u'$
associated to $xF(x)$ can be deduced from the DCT $\delta$ of the
vector $u$ associated to $F(x)$ via the following formulas, which are
easily deduced from the recurrence relation $2x T_n(x) = T_{n+1}(x) +
T_{n-1}(x)$:
  \begin{itemize}
  \item $\delta'[0] = (b-a) \delta[1]/4 + (b+a) \delta[0]/2$,
  \item $\delta'[k] = (b-a) (\delta[k-1] + \delta[k+1])/4 + (b+a)\delta[k]/2$, $1\leq k\leq n-2$,
  \item $\delta'[n-1] = (b-a) \delta[n-2]/4 + (b+a)\delta[n-1]/2$.
  \end{itemize}

  This gives, in practice, a significant speedup in the construction   of the matrix for large $d$.

A similar strategy applies for the computation of the remainder matrix $M_r$.
  
\subsection{Overestimation issues}\label{sssec:overst2D}

  We now discuss the instruction $B_f \leftarrow  \max \left (g([0,2\pi]) \right )$, presented at Step~\ref{step:B_f2D} of Algorithm~\ref{algo:build_two_var}. For some functions such as $\exp$ or $\Gamma$, we can take advantage of a closed expression for this maximum. Otherwise, either we develop a dedicated routine to derive a tight estimate of this value, or we can use interval or ball arithmetic~\cite{Tucker2011,Johansson2017} to quickly obtain an upper bound, which then may raise overestimation issues. So far, our experiments, which use Arb\footnote{\url{https://flintlib.org/}}, resp. MPFI\footnote{\url{https://gitlab.inria.fr/mpfi}}, for every ball, resp. interval, arithmetic based computation, did not show any problematic overestimation -- we thus did not have to develop dedicated routines.  

This remark leads to the fact that we may overestimate $B_f$, thus $M_r$, in implementations of the Algorithm. Still, if the condition stated at Step~\ref{step:cond_lll2D} of Algorithm~\ref{algo:2variables} is satisfied with these (possibly overestimated) computed values, then this condition is also satisfied for the actual values and Theorem~\ref{thm:alternants_interpolants2D} and Corollary~\ref{cor:main2D} apply.
  \subsection{Choice of norms}\label{sssec:choice_norms2D}
  
  Beside the heuristic coprimality condition, the success condition of Algorithm~\ref{algo:2variables} is expressed at Step~\ref{step:cond_lll2D} in terms of the Euclidean norm of the vectors. This is a  mere convenience related to the fact that the bounds on the  LLL algorithm are expressed in terms of this norm, making it more  tractable in our proofs of correctness / complexity analysis. 
  
Alternatively, one may make the choice of a success condition expressed in terms of the 1-norm, namely
\begin{multline}\label{eq:norm1_2D}
  \max_{i = 0, 1} \bigg (  \| ( M_{LLL}[i,j] )_{0\le j \le N + N_1 N_2 -1} \|_1   \\
   + (N+N_1N_2)\frac{ \| ( M_{LLL}[i,j] )_{N_1 N_2\le j \le N + N_1 N_2 -1} \|_1}{4N}  \bigg )< 2^{\Prec}.
\end{multline}
Indeed, as we shall see in the proof of Theorem~\ref{thm:alternants_interpolants2D}, this condition means that from the vector we can derive a polynomial   $P$ such that $P(ux, v(f(x)+t)) = \sum_{i_1=0}^{N_1 -1}\sum_{i_2=0}^{N_2 -1} c_{i_1,i_2} T_{i_1,[a_1,b_1]}(x)T_{i_2,[a_2,b_2]}(t) + R(x,t)$, with $\sum_{i_1=0}^{N_1-1}\sum_{i_2=0}^{N_2-1}  |c_{i_1,i_2}| +$  $\norminf{R(x,t)} $ $ < 1$. Since   $\norminf{T_{i, [a_k,b_k]}} = 1$ for all $i$ and $k=1,2$, we see that \eqref{eq:norm1_2D}  guarantees that the polynomial $P$ verifies $\norminf{P(ux, v(f(x)+t))} < 1$
  over $[a_1, b_1] \times [a_2,b_2]$, which is the key criterion for the success of the algorithm. 
  As this condition is slightly more efficient in practice, we recommend  using it in any implementation of our algorithm.

\subsection{Rounding issues}\label{sssec:rounding2D}

In Algorithm~\ref{algo:build_two_var}, the computation of $\Prec$ at
Step~\ref{step:prec2D}, as well as those performed at
Steps~\ref{step:mc2D} and~\ref{step:mr2D}, as written, require correct
rounding, and may raise issues such as those this paper aims at
solving. 

In this context, we can, however quite easily avoid them, using the
classical remark that if we let $\tilde{x}$ be an underapproximation
of $x$ with $\tilde{x} \le x \le \tilde{x} + 1/2$, then we have $\lceil
x \rceil \in \{ \lceil \tilde{x} \rceil, \lceil \tilde{x} \rceil + 1
\}$. Similarly, if $\tilde{x}$ is an approximation of a nonzero $x$
such that $|\tilde{x}| - 1/2 \le |x| \le \tilde{x}$, we get $[x]_0 \in
\{ [\tilde{x}]_0, [\tilde{x}]_0 - \mathrm{sgn}(x) \}$. 

It is well known that such approximations are easy to compute, either by using floating-point with sufficient intermediate precision and ensuring that we work with over/under-approximations using the suitable rounding mode for each operation, or using ball arithmetic as provided, for instance, by Arb.

In the sequel, we denote by $\PracPrec$, $\PracMc$, $\PracMr$ the quantities computed in this way. Note that $M_c$ and $M_r$ are then defined with $\PracPrec$ instead of $\Prec$. We have $\PracPrec \in \{ \Prec, \Prec + 1 \}$, $M_c[i,j] - \PracMc[i,j] \in \{ 0, \sgn( M_c[i,j]) \}$  for $i = 0, \ldots, N-1, j = 0, \ldots, N_1N_2-1$, and $M_r[i,j] - \PracMr[i,j] \in \{ 0, 1 \}$ for $i, j = 0, \ldots, N-1$.

The question of the intermediate  precision required will be addressed in Appendix~\ref{app:prec2D}. 
 Note that in this setting, Remark~\ref{rem:hatA2D} must be replaced by: 
\begin{remark}
  Let $\PrachatA  = 2^{-\PracPrec} (\PracMc\, \PracMr)$ be the actual computed matrix, we notice that since $\left | \PrachatA [i,j] \right | \leq \left | A [i,j] \right |$ for all $i,j$, the same argument as in Remark~\ref{rem:hatA2D} applies: Theorem~\ref{thm:majodet-ext2D} and its corollaries hold for   $\PrachatA$.
\end{remark}

\begin{remark}\label{rem:adapt2D} 
The proof of Theorem~\ref{thm:alternants_interpolants2D} should be slightly adapted if Subsection~\ref{sssec:rounding2D} is used. Recall that 
$\PrachatA =  2^{-\PracPrec} (\PracMc\, \PracMr)$, we obtain for $j =0, \ldots, N+N_1N_2-1$,
\[
  | (\Lambda A)[j] - (\Lambda \PrachatA)[j] |   \leq  \sum_{0\leq k+\ell \leq d}  |\lambda_{k,\ell}| 2^{1-\PracPrec} \le \sum_{0\leq k + \ell \leq d}
  |\lambda_{k,\ell}| \min_i \hat{A}_2[i,i] \frac{1}{2N}
\]
from which follows $\normun{\Lambda A}  \leq \normun{\Lambda \hat{A}}  + (N+N_1N_2)\frac{\normun{\Lambda \hat{A}_2}}{2N} \leq \normun{\Lambda \hat{A}}  + \frac{1}{2N^{1/2}}$. The upper bound in Inequality~\eqref{ineq:un} becomes $ 1/(2N^{1/2})  + 1/\sqrt{N+N_1N_2} < 1$ since $ N\geq 3, N_1, N_2 \ge 2$.

Note also that the success condition \eqref{eq:norm1_2D} becomes
\begin{multline*}
    \max_{i = 0, 1} \bigg (  \| ( M_{LLL}[i,j] )_{0\le j \le N + N_1 N_2 -1} \|_1  \\  + (N+N_1N_2)\frac{ \| ( M_{LLL}[i,j] )_{N_1 N_2\le j \le N + N_1N_2 -1} \|_1}{2N}   \bigg )< 2^{\Prec}.
\end{multline*}
\end{remark}

\subsection{Newton polynomials}\label{sssec:newton2D}

In practice, we replace the monomial functions   $\{u^k x^k\}_{0 \le k \le d}$ with Newton polynomial functions $\{ u x (ux - 1) \cdots (ux - k +1)/k! \}_{0 \le k \le d}$. In both cases, the substitution $x = X/u$ yields integer values - respectively   $\{X^k\}_{0 \le k \le d}$ and  $\{ X (X - 1) \cdots (X - k +1)/k! \}_{0 \le k \le d}$. Likewise, we replace the ``monomials'' $\{v^k (f(x)+t)^k\}_{0 \le k \le d}$ with ``Newton polynomials''  $\{ v (f(x)+t) (v (f(x)+t) - 1) \cdots (v (f(x)+t) - k +1)/k! \}_{0 \le k \le d}$. Hence, the changes to operate are:
  \begin{itemize}
  \item Step~\ref{step:varphi2D}, Algorithm~\ref{algo:build_two_var},

    $\varphi \leftarrow  \left  (x \mapsto  \left(\prod_{j = 1}^{k} (u x -j + 1)/j \right) \left(\prod_{j = 1}^{\ell}  (v (f(x)+t) - j + 1)/j \right )\right)$,
  \item Step~\ref{step:mr2D}, Algorithm~\ref{algo:build_two_var},

    $M_r [i,i] \leftarrow \left  \lfloor 2^{\Prec} R_0 \prod_{j = 1}^{k} \frac{u B_x + j - 1 }{j}  \prod_{j = 1}^{\ell} \frac{v (B_f+B_t)+j -1}{j} \right \rfloor$,

  \item Step~\ref{step:monomials2D}, Algorithm~\ref{algo:2variables},

      $L_m \leftarrow \left  [ \prod_{j = 1}^{k} \frac{X_1 -j +1}{j}  \prod_{j = 1}^{\ell} \frac{X_2  -j + 1}{j} \textrm{ for } k = 0 \textrm{ to } d-\ell \textrm{ for } \ell = 0  \textrm{ to } d \right ]$.
  \end{itemize} 
  The use of Newton polynomials leads to smaller uniform norms, hence
  makes it possible to tackle larger intervals for a same $d$.  This
  improvement is asymptotically negligible (it contributes to a lower
  order term) but is quite significant in practice.

  Note that the optimizations described in Section~\ref{sssec:eff2D}
  can easily be adapted to the case of Newton polynomials.

\section{Complexity of Algorithms~\ref{algo:build_two_var} and \ref{algo:2variables}} \label{app:compalgo}
In this subsection, we let $\Mult(n)$ denote the complexity of multiplying
two $n$-bit integers (or two precision $n$ floating-point
numbers). For simplicity and readability our results will be stated under the best asymptotic estimate as of today, namely $\Mult(n) = O(n \log n) = \tilde{O}(n)$,
see~\cite{HvdH2021}, but the proofs keep the dependency in $\Mult(n)$ explicit. Similarly, the Theorems assume typical choices of parameters for our algorithm, namely $N_1 N_2 = O(d^2)$ and $d^2 = O({\mathfrak p})$, but the general case can be found in the proofs.

\subsection{Precision issues}\label{app:prec2D}
In order to estimate the complexity of Algorithm~\ref{algo:build_two_var}, we need to evaluate the required precision for those algorithms. We choose to refer to the extended version of our work~\cite{BH2023} rather than including a long and technical precision analysis in this paper: its impact is solely on the complexity estimate of this section -- the correctness of our implementation is ensured by the use of interval arithmetic.

We summarize our results on this point in the following:
\begin{lemma}\label{lem:prec_comp}
  Put $\cM = \max (u, v, |a_1|, |b_1|, |a_2|, |b_2|, \rho_1, \rho_2, B_f,
  \max_{[a,b]} |f'(x)|)$. Assume that $\rho_1, \rho_2$ are
  integers. Then, the computations of Algorithm~\ref{algo:build_two_var} on
  input $a_1, b_1, f, d, N_1, N_2,$ $ u,$ $ v, \rho_1, \rho_2$ can be made
  in floating-point precision ${\mathfrak p} = \max(N_1 \log \rho_1, N_2 \log \rho_2) + O(d \log \cM)$.
\end{lemma}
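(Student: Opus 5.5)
The precision bound has two contributions: one fixed by the output scale $2^{\Prec}$, and one caused by cancellation and by the growth of the function values. I would bound them separately and add them.

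\textbf{Target scale.} By Lemma~\ref{lem:prec2D}, $\Prec = -\log_2(\rho_1^{-N_1}+\rho_2^{-N_2}) + O(\log N)$. This is at most $\max(N_1\log\rho_1, N_2\log\rho_2) + O(\log d)$. Every entry of $M_c$ and $M_r$ is an integer of the form $[2^{\Prec}\,\cdot\,]_0$ or $\lfloor 2^{\Prec}\,\cdot\,\rfloor$. By Subsection~\ref{sssec:rounding2D}, it is enough to know each underlying real number to within absolute error $2^{-\Prec-1}$, as long as the error is one-sided where directed rounding is needed; this is achieved by choosing rounding modes or by using ball arithmetic.

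\textbf{Size of the quantities.} Next I would bound the magnitudes of the intermediate quantities. The values $\varphi(\mu) = (u\mu_x)^k\,(v(f(\mu_x)+\mu_t))^\ell$ at the Chebyshev nodes have absolute value at most $(u\cM)^k\,(v(2\cM))^\ell \le (2\cM^2)^d$. Their logarithms are therefore $O(d\log\cM)$, and the same bound holds for $R_0(uB_x)^k(v(B_f+B_t))^\ell$ in $M_r$. To reach absolute accuracy $2^{-\Prec-1}$ on such numbers, the relative precision must be $\Prec + O(d\log\cM)$ plus the accumulated error.

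\textbf{Error propagation.} I would track the error through the steps of Algorithm~\ref{algo:build_two_var} in order.
\begin{enumerate}
\item The nodes are cosines computed to relative precision $2^{-{\mathfrak p}}$, so their absolute error is $O(\cM 2^{-{\mathfrak p}})$.
\item An error in $\mu_x$ propagates to $f(\mu_x)$ with the factor $\max|f'| \le \cM$, by the mean value theorem.
\item Forming the products $(u\mu_x)^k(\cdot)^\ell$ involves $O(d)$ multiplications. The relative error grows by at most a factor $O(d)$, and converting it to absolute error multiplies it by $(2\cM^2)^d$. The total absolute error is then $2^{-{\mathfrak p}}\,\cM^{O(d)}$.
\item The DCT with the factor $4/(N_1N_2)$ is a sum of $N_1N_2$ terms with coefficients of absolute value at most $1$. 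Performed naively, it adds a factor $O(N_1N_2)$ to the absolute error, which costs only $O(\log d)$ bits.
\end{enumerate}
Requiring the final error to be at most $2^{-\Prec-1}$ gives ${\mathfrak p} \ge \Prec + O(d\log\cM) + O(\log d)$. This matches the claim, since $\log\cM \ge \log 2$ absorbs the $O(\log d)$ term for $d \ge 1$ (using $\log d = O(d)$).

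\textbf{Main obstacle.} The delicate step is $B_f = \max g([0,2\pi])$. Only an upper bound is needed here, by Subsection~\ref{sssec:overst2D}, so I would replace it by a certified enclosure. Its size is $O(\log\cM)$ bits, and it enters $M_r$ only through a monotone product that is rounded down. An overestimate only increases $M_r$, and Theorem~\ref{thm:alternants_interpolants2D} remains valid in that case. The rest of the argument is a routine rounding-error analysis, which I would cite from~\cite{BH2023} as the paper does.
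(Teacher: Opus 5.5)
The paper gives no argument for this lemma: it defers the whole precision analysis to \cite{BH2023}. Your sketch therefore does not compete with a proof in the text; it reconstructs the standard forward error analysis behind the citation. Your decomposition is the right one. It has three parts:
\begin{itemize}
\item $\Prec$ bits for the output scale;
\item $O(d\log\cM)$ bits, because the samples $\varphi(\mu)$ can be as large as $(2\cM^2)^d$ while the coefficients must be known to absolute accuracy $2^{-\Prec-1}$;
\item a few bits for accumulation in the DCT.
\end{itemize}
Since $\Prec\le\min(N_1\log_2\rho_1,N_2\log_2\rho_2)+O(\log N)$, your bound is slightly stronger than the stated $\max$. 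It also shows where each term comes from, which the bare citation does not.

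Two steps need repair.

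First, in step 3 you propagate \emph{relative} errors through the products. The error in $v(f(\mu_x)+\mu_t)$ comes from the node perturbation and from evaluating $f$, and it is absolute. Its relative size is unbounded when $f(\mu_x)+\mu_t$ is close to $0$, and nothing in the hypotheses excludes this. Work with absolute errors instead: if $|\tilde y-y|\le\varepsilon$, then $|\tilde y^{\ell}-y^{\ell}|\le \ell(|y|+\varepsilon)^{\ell-1}\varepsilon$. This gives the $2^{-{\mathfrak p}}\cM^{O(d)}$ bound you want.

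Second, in step 4 the naive DCT costs about $\log_2(N_1N_2)$ extra bits. This is $O(\log d)$ only under the standing assumption $N_1N_2=O(d^2)$ of Appendix~\ref{app:compalgo}, which is not part of the lemma, so you should invoke it explicitly. Without it, take $N_1\log\rho_1\approx N_2\log\rho_2$: there is then no slack between $\Prec$ and the $\max$, and $\log(N_1N_2)$ need not be $O(d\log\cM)$.

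You also never use that $\rho_1,\rho_2$ are integers. That is harmless, because the off-by-one tolerance of Subsection~\ref{sssec:rounding2D} already allows $R_0$ and $\Prec$ to be computed approximately.
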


In the sequel, we assume  that interval evaluation (that we use in practice) at precision $p$ of a function $f$
uses $O(1)$ evaluations of $f$ at precision $p$. In our implementation, we
used the Arb library~\cite{Johansson2017}

\end{document}